\documentclass[11pt]{amsart}

\usepackage{amsmath, amssymb ,amsthm, amsfonts, amsgen,color}
\usepackage{bbm}
\usepackage{tikz}

\numberwithin{equation}{section}

\setlength{\textheight}{24cm} \setlength{\textwidth}{17cm}
\setlength{\parindent}{0.5cm} \setlength{\topmargin}{-1.5cm}
\setlength{\oddsidemargin}{0cm} \setlength{\evensidemargin}{0cm}

\newcommand{\R}{{\mathbb{R}}}

\newcommand{\beq}{\begin{equation}}
\newcommand{\eeq}{\end{equation}}

\newcommand{\weaklystar}{\rightharpoonup^\ast}
\newcommand{\weakly}{\rightharpoonup}

\def\XXint#1#2#3{{\setbox 0=\hbox{$#1{#2#3}{\int}$}
\vcenter{\hbox{$#2#3$}}\kern-.5\wd0}}


\makeatletter
\def\rightharpoonupfill@{\arrowfill@\relbar\relbar\rightharpoonup}

\newcommand{\xrightharpoonup}[2][]{\ext@arrow
0359\rightharpoonupfill@{#1}{#2}} \makeatother

\newtheorem{Theorem}{Theorem}[section]
\newtheorem{Lemma}[Theorem]{Lemma}
\newtheorem{Proposition}[Theorem]{Proposition}
\newtheorem{Corollary}[Theorem]{Corollary}

\newtheorem{Remark}[Theorem]{Remark}
\newtheorem{Definition}[Theorem]{Definition}
\newtheorem{Example}[Theorem]{Example}

\def\supess{\mathop{\rm ess\: sup }}


\title[Nonlocal supremal functionals ]{Lower semicontinuity and relaxation of \\ nonlocal \boldmath{$L^\infty$}-functionals}

\author{Carolin Kreisbeck}
\address{Mathematisch Instituut, Universiteit Utrecht, Postbus 80010, 3508 TA Utrecht, The Netherlands}
\email{c.kreisbeck@uu.nl}

\author{Elvira Zappale}
\address{D.I.In., Universit\`a degli Studi di Salerno, Via Giovanni Paolo II 132, 84084 Fisciano, SA, Italy}
\email{ezappale@unisa.it }

\begin{document} 

\maketitle
 \begin{abstract} 
We study variational problems involving nonlocal supremal functionals
\begin{align*}
L^\infty(\Omega;\R^m) \ni u\mapsto {\rm ess sup}_{(x,y)\in \Omega\times \Omega} W(u(x), u(y)), 
\end{align*}
where $\Omega\subset \mathbb{R}^n$ is a bounded, open set and $W:\mathbb{R}^m\times\mathbb{R}^m\to \mathbb{R}$ is a suitable function. Motivated by existence theory via the direct method, we identify a necessary and sufficient condition for $L^\infty$-weak$^\ast$ lower semicontinuity of these functionals, namely, separate level convexity 
of a symmetrized and suitably diagonalized version of the supremands. 
More generally, we show that the supremal structure of the functionals is preserved during the process of relaxation. The analogous statement in the related context of double-integral functionals was recently shown to be false. 
Our proof relies substantially on the connection between supremal and indicator functionals. This allows us to recast the relaxation problem into characterizing weak$^\ast$ closures of a class of nonlocal inclusions, which is of independent interest.  
To illustrate the theory, we determine explicit relaxation formulas for examples of functionals with different multi-well supremands.      
\vspace{8pt}

 \noindent\textsc{MSC (2010):} 49J45  (primary); 26B25, 47J22
 
 \noindent\textsc{Keywords:} nonlocality, supremal functionals, relaxation, lower semicontinuity, nonlocal inclusions, generalized notions of  convexity
 
 \vspace{8pt}
 
 \noindent\textsc{Date:} \today
 \end{abstract}

	\section{Introduction}	\footnote{\color{olive}The colored parts indicate minor corrections and improvements to the version published in Calc. Var. 59:138 (2020), doi:10.1007/s00526-020-01782-w.}      
	Nonlocal functionals in the form of double integrals appear naturally in different applications; examples include peridynamics~\cite{BMP15, MeD15,Sil00}, image processing~\cite{BrN18, GiO08} or the theory of phase transitions~\cite{CDV17, DFL18, OvV12}. 
In the homogeneous case, separate convexity of the integrands has been identified as a necessary and sufficient condition for the weak lower semicontinuity of such functionals~\cite{BeP06, Mun09, Ped97}. 
When it comes to relaxation, meaning the characterization of weak lower semicontinuous envelopes, though, the problem is still largely open. The difficulty lies in the fact that, counterintuitively,  
relaxation formulas in general cannot be obtained via separate convexification of the integrands, as explicit examples in~\cite{BMC18, BeP06,  Ped16} indicate. 
As first shown in~\cite{KrZ19}, and with different techniques in ~\cite{MCT20}, even a representation of the relaxation with a double integral of the same type is not always possible. 

Inspired by these recent developments, as well as new models arising in the theory of machine learning (see e.g.~\cite{FCAO18}), this article addresses a related problem by discussing homogeneous supremal (or $L^\infty$-)functionals in the nonlocal setting, i.e.,
\begin{align}\label{ourfunct}
L^\infty(\Omega;\mathbb{R}^m) \ni u\mapsto J(u):= {\rm ess sup}_{(x,y)\in \Omega\times \Omega} W(u(x), u(y)), 
\end{align}
where $\Omega\subset \mathbb{R}^n$ is a bounded, open set and $W:\mathbb{R}^m\times\mathbb{R}^m\to \mathbb{R}$ is a given Borel function satisfying suitable further assumptions regarding continuity and coercivity. We contribute answers to two key questions, which are motivated by the existence theory for solutions to variational problems in form of the direct methods in the calculus of variations: 
\begin{itemize} 
\item[(Q1)] What are necessary and sufficient conditions on the supremand $W$ for the (sequential) lower semicontinuity of $J$ with respect to the natural topology, that is, the $L^\infty$-weak$^\ast$ topology?
\item[(Q2)] 
If $J$ fails to satisfy the conditions resulting as an answer to (Q1), can we find an explicit representation of its relaxation, that is, of its $L^\infty$-weak$^\ast$ (sequential) lower semicontinuous envelope?
\end{itemize}

Notice that in the context of this paper, the $L^\infty$-weak$^*$ topology and the sequential one can always be used interchangeably, as the former admits a metrizable description on bounded sets;
see Remark~\ref{rem1.2}\,a) for a more detail. 
 
We point out that inhomogeneous versions of~\eqref{ourfunct} appeared already in~\cite{GPP06}, and more lately  in~\cite{Guo17, KSZ14}. Moreover, 
it is useful to observe that 
functionals of the type~\eqref{ourfunct} share key features with two different classes of functionals that have been studied intensively in the literature, namely double-integral functionals mentioned already at the beginning, i.e.,
\begin{align*}
L^p(\Omega;\R^m)\ni u\mapsto \int_{\Omega}\int_\Omega W(u(x), u(y))\,d{x}\, dy
\end{align*}
with $p\in [1, \infty)$, 
and supremal functionals (or $L^\infty$-functionals), i.e.,
\begin{align*}
L^\infty(\Omega;\R^m)\ni u\mapsto \supess_{x\in \Omega} f(u(x))
\end{align*} 
with a suitable function $f:\R^m\to \R$; for more details and background on these two branches of research, including a list of references, we refer to Sections~\ref{subsec:supremal} and~\ref{subsec:nonlocal}. 
Borrowing and combining methods and techniques from these two fields, which are largely based on Young measure theory, equip us with quite a rich tool box for analyzing nonlocal supremal functionals. However, it will become clear in the following that, in order to settle the questions (Q1) and (Q2), new ideas are needed in addition. 

A crucial realization is that the functional $J$ in~\eqref{ourfunct} remains unaffected by certain changes of $W$, beyond mere symmetrization. Indeed, replacing $W$ with its diagonalized and symmetrized version $\widehat W$ (see~\eqref{subsec:lsc_relaxation} along with Section~\ref{sec:diagonal} for the precise definition) still gives the same functional.

To understand better the role of diagonalization, it helps to take a different perspective on our nonlocal supremal functionals and to exploit their connection to the so-called nonlocal indicator functionals. These are double integrals over the characteristic function $\chi_K$ for a compact set $K\subset \R^m\times \R^m$, i.e.,  
\begin{align}\label{indicator_intro}
L^\infty(\Omega;\R^m)\ni u\mapsto \int_\Omega\int_\Omega \chi_K(u(x), u(y))\,dx \,dy.
\end{align} 
By modification of a result due to Barron, Jensen \& Wang~\cite[Lemma~1.4]{BJW01}, we find that (Q1) and (Q2) for $J$ in~\eqref{ourfunct} are equivalent to studying the same questions for all indicator functionals associated with the sublevel sets of $W$, cf.~Proposition~\ref{propequivsupremalunbounded}. 
Then again,~\eqref{indicator_intro} is closely tied to nonlocal inclusions of the form
\begin{align}\label{nonlocal_inclusion}
(u(x), u(y))\in K &\text{ for a.e.~$(x,y)\in \Omega\times \Omega$,}
\end{align} 
and (Q2) comes down to identifying the asymptotic behavior of $L^\infty$-weakly$^\ast$ converging sequences subject to this type of constraint, which is also of independent interest. If we denote by $\mathcal A_K$ the set of all functions in $L^\infty(\Omega;\R^m)$ satisfying~\eqref{nonlocal_inclusion}, the task is to  characterize the $L^\infty$-weak$^\ast$ closure of $\mathcal A_K$.
In the classical local setting, that is, when~\eqref{nonlocal_inclusion} is changed into~
\begin{align}
	\label{classical}
u(x)\in A \text{ for a.e.~$x\in \Omega$ with $A\subset\R^m$ compact},
\end{align}
it is well known that the $L^\infty$-weak$^\ast$ limits of sequences with this property correspond to essentially bounded functions with values in the convex hull of $A$. In the nonlocal case, where one expects the separate convexification to take over the role of convexification in the local problem, things turn out to be a bit more subtle.  

The reason lies in the special interaction between nonlocality and the pointwise constraint, which makes~\eqref{nonlocal_inclusion} substantially different from the classical case~\eqref{classical}, as this simple example illustrates.  If $m=1$ and $K=\{(1, 0), (-1,0), (0,1), (0,-1)\}\subset \R\times \R$, then $\mathcal A_K=\emptyset$, cf.~Example~\ref{exampleK} and~\eqref{AE=AEhat}. 
For a general compact 
$K\subset \R^m\times \R^m$, we show in Proposition~\ref{lem:Ik=ItildeK} that the nonlocal inclusion~\eqref{nonlocal_inclusion} is invariant under symmetrization and diagonalization of $K$, i.e., 
\begin{align}\label{AK=AKhat_intro}
 \mathcal A_K=\mathcal A_{\widehat K}
 \end{align}
with
\begin{align}\label{Khat_intro}
\widehat K:=\{(\xi, \zeta)\in K: (\xi, \zeta), (\xi, \xi), (\zeta, \zeta)\in K\}.
\end{align}
Based on this observation, we prove the following characterization of $L^\infty$-weak$^\ast$ limits of sequences in $\mathcal A_K$. Particularly, this result is one of the main ingredients for answering questions (Q1) and (Q2). 

\begin{Theorem}\label{theo:weakclosure_intro}
Let $K\subset \R^m\times \R^m$ be compact, let $\widehat K$ be the symmetric and diagonal version of $K$ in the sense of~\eqref{Khat_intro}, and let $\widehat K^{\rm sc}$ be the separately convex hull of $\widehat K$, see Definition~\ref{separateconvexitysets} below. 
If $m>1$, assume in addition that $\widehat K^{\rm sc}$ is compact and  that the symmetrization and diagonalization of $\widehat K^{\rm sc}$ can be represented as the union of all cubes of the form $[\alpha, \beta]\times [\alpha, \beta]$ with $(\alpha, \beta)\in K$, \color{olive} which are supposed to comprise the maximal Cartesian subsets of $\widehat{K}^{\rm sc}$, \color{black} cf.~\eqref{ass76} \color{olive} and Definition~\ref{def:Cartesian_subset}. \color{black}

Then, the (sequential) $L^\infty$-weak$^\ast$ closure of $\mathcal A_K$ is given by 
$\mathcal A_{\widehat K^{\rm sc}}$.
\end{Theorem}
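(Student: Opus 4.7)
The plan is to establish the two set inclusions $\overline{\mathcal{A}_K}^{w^\ast}\subseteq \mathcal{A}_{\widehat{K}^{\rm sc}}$ and $\mathcal{A}_{\widehat{K}^{\rm sc}}\subseteq \overline{\mathcal{A}_K}^{w^\ast}$ separately. Throughout I will exploit the invariance $\mathcal{A}_K=\mathcal{A}_{\widehat{K}}$ from Proposition~\ref{lem:Ik=ItildeK}, effectively replacing $K$ by $\widehat{K}$ so that the constraint is symmetric and contains the diagonal. The engine of both steps is the Young measure framework for $L^\infty$-weak$^\ast$ convergence, complemented by the following elementary stability lemma: if $A\times B\subseteq S\subseteq \mathbb{R}^m\times\mathbb{R}^m$, then $\mathrm{conv}(A)\times\mathrm{conv}(B)\subseteq S^{\rm sc}$. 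This is proved by convexifying one variable at a time while keeping the other fixed, using the idempotence of the separately convex hull.

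For the easy inclusion $\overline{\mathcal{A}_K}^{w^\ast}\subseteq \mathcal{A}_{\widehat{K}^{\rm sc}}$, I take a sequence $(u_j)\subseteq \mathcal{A}_{\widehat{K}}$ with $u_j\weakstar u$ in $L^\infty$ and extract a generating Young measure $(\nu_x)_{x\in\Omega}$ satisfying $u(x)=\int_{\mathbb{R}^m}\xi\,d\nu_x(\xi)$. For product-type continuous test functions $f(\xi,\zeta)=\phi(\xi)\psi(\zeta)$, the double integral $\int_{\Omega\times\Omega}f(u_j(x),u_j(y))\,dx\,dy$ factorizes and passes, by the single-variable Young measure convergence, to the corresponding product integral against $\nu_x$ and $\nu_y$; Stone--Weierstrass then delivers the analogous statement for every continuous $f$, and approximating $\chi_{\widehat{K}^c}$ from below by non-negative continuous functions yields $(\nu_x\otimes\nu_y)(\widehat{K}^c)=0$, that is, $\mathrm{supp}\,\nu_x\times\mathrm{supp}\,\nu_y\subseteq \widehat{K}$ for a.e.\ $(x,y)$. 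Applying the stability lemma with $A=\mathrm{supp}\,\nu_x$ and $B=\mathrm{supp}\,\nu_y$ and recalling that $u(x),u(y)$ lie in the respective convex hulls delivers $(u(x),u(y))\in\widehat{K}^{\rm sc}$ a.e.

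For the harder inclusion $\mathcal{A}_{\widehat{K}^{\rm sc}}\subseteq \overline{\mathcal{A}_K}^{w^\ast}$, let $u\in\mathcal{A}_{\widehat{K}^{\rm sc}}$ and denote by $R\subseteq\mathbb{R}^m$ its essential range; the pointwise constraint gives $R\times R\subseteq \widehat{K}^{\rm sc}$. Since $R\times R$ is itself a Cartesian set, the hypothesis that the maximal Cartesian subsets of (the symmetrization and diagonalization of) $\widehat{K}^{\rm sc}$ are precisely the cubes $[\alpha,\beta]\times[\alpha,\beta]$ with $(\alpha,\beta)\in K$ allows me to select a single such pair with $R\subseteq [\alpha,\beta]$. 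Writing $u(x)=\lambda(x)\alpha+(1-\lambda(x))\beta$ with measurable $\lambda:\Omega\to[0,1]$, I approximate $u$ by a classical simple laminate: tile $\Omega$ with cubes $Q_i^j$ of side $j^{-1}$ and on each $Q_i^j$ partition into two pieces of relative measure $\bar\lambda_i^j$ and $1-\bar\lambda_i^j$ (where $\bar\lambda_i^j$ is the average of $\lambda$ on $Q_i^j$), setting $u_j=\alpha$ on the first and $u_j=\beta$ on the second. Standard estimates give $u_j\weakstar u$ in $L^\infty(\Omega;\mathbb{R}^m)$, while $(u_j(x),u_j(y))\in\{\alpha,\beta\}^2$ a.e., and all four corner pairs $(\alpha,\alpha),(\alpha,\beta),(\beta,\alpha),(\beta,\beta)$ lie in $\widehat{K}$ (and hence in $K$) thanks to the diagonal and symmetric structure of $\widehat{K}$, so that $u_j\in\mathcal{A}_K$.

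I expect the main obstacle to be securing a \emph{single} pair $(\alpha,\beta)\in K$ whose cube covers the whole of $R\times R$ in one shot. This is precisely where the maximal-Cartesian-subsets hypothesis enters decisively: without it, $R$ could straddle several distinct cubes and a two-value oscillation would generate cross-pairs lying outside every single cube, and hence outside $\widehat{K}^{\rm sc}$ entirely, destroying the constraint. A subsidiary technical point is the rigorous passage from continuous product test functions to the discontinuous indicator $\chi_{\widehat{K}^c}$ in Step~1, which is handled by monotone approximation. In the $m>1$ regime, the additional compactness assumption on $\widehat{K}^{\rm sc}$ keeps the Young measure machinery applicable and the laminate construction well-posed.
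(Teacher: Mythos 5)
Your overall architecture is sound and both inclusions go through. The forward inclusion is essentially the paper's argument in different clothing: the paper applies the separate Jensen inequality (Lemma~\ref{lem:sepJensen}) with $W=\chi_{\widehat K^{\rm sc}}$ to the product Young measure $\nu_x\otimes\nu_y$ supported in $\widehat K$, which is exactly your ``stability lemma'' $\mathrm{conv}(A)\times\mathrm{conv}(B)\subseteq S^{\rm sc}$ read at the level of barycenters. For the reverse inclusion your route is genuinely leaner than the paper's: you extract the single Cartesian product $R\times R\subseteq\widehat K^{\rm sc}$ directly from the essential range of $u$ (using closedness of $\widehat K^{\rm sc}$ and positivity of the measure of preimages of small balls), whereas the paper reaches the same structural fact through Proposition~\ref{prop:3}, which in turn rests on the simple-function approximation of Lemma~\ref{lem:approx} and a Blaschke-selection/Hausdorff-convergence argument; likewise your local-averaging laminate replaces the paper's two-stage construction in Lemma~\ref{lem:1} (periodic oscillations for simple functions followed by Attouch diagonalization). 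Both simplifications are valid, and the final observation that all four corner pairs from $\{\alpha,\beta\}\times\{\alpha,\beta\}$ lie in $K$ because $(\alpha,\beta)\in\widehat K$ is exactly right.

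One step needs attention. You justify ``$R\times R$ is contained in a single cube $Q_{\alpha,\beta}$ with $(\alpha,\beta)\in\widehat K$'' by appeal to ``the hypothesis'' on maximal Cartesian subsets. For $m>1$ this is indeed assumed (see~\eqref{ass76}), so your argument is complete there. For $m=1$, however, no such hypothesis is made in the theorem, and the needed fact --- that $\widehat K^{\rm sc}=\bigcup_{(\alpha,\beta)\in\widehat K}Q_{\alpha,\beta}$ with these cubes exhausting the maximal Cartesian subsets, so that with $a=\min R$, $b=\max R$ one finds $(a,b)\in Q_{\alpha,\beta}$ for some $(\alpha,\beta)\in\widehat K$ and hence $R\subseteq[a,b]\subseteq[\alpha,\beta]$ --- is a nontrivial structural property of two-dimensional separately convex hulls of symmetric, diagonal compact sets. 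This is the content of Lemma~\ref{lem:square}, and it is precisely what fails for $m>1$ (cf.~Remark~\ref{rem:failure}\,c)), which is why the extra hypothesis is imposed only in the vectorial case. Your proof is therefore complete for $m>1$ but has a gap at this one point for $m=1$: you must still prove the scalar decomposition of $\widehat K^{\rm sc}$ into cubes with corners in $\widehat K$ rather than assume it.
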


\begin{Remark}\label{rem1.2}  a) In light of the well-known fact that the $L^\infty$-weak$^\ast$ topology is metrizable on bounded sets (see e.g.~\cite[A.1.5]{FoL07}), the compactness hypothesis on $K$ in the above theorem guarantees the equivalence between the use of the $L^\infty$-weak$^\ast$ topology and the corresponding sequential version. 

b) Theorem \ref{theo:weakclosure_intro} implies that $\mathcal A_K$ is weakly$^\ast$ closed if and only if 
\begin{align}\label{KK}
\widehat{\widehat K^{\rm sc}} = \widehat K,
\end{align}
which, in the scalar case $m=1$, is equivalent to the separate convexity of $\widehat K$, cf.~Corollary~\ref{cor:Khatsc}. Notice that this necessary and sufficient condition is strictly weaker than requiring that $K$ is separately convex. \\[-0.2cm] 

c) As an immediate corollary of Theorem~\ref{theo:weakclosure_intro}, we obtain that the relaxation of the indicator functional~\eqref{indicator_intro} is given by
\begin{align*}
L^\infty(\Omega;\R^m)\ni u\mapsto \int_\Omega\int_\Omega \chi_{\widehat{K}^{\rm sc}}(u(x), u(y))\,dx \,dy;
\end{align*}
in particular,~\eqref{indicator_intro} is $L^\infty$-weak$^\ast$ lower semicontinuous if and only if~\eqref{KK} holds, cf.~Corollary~\ref{cor:chara_wlscp_indicator}.
\end{Remark}

The proof of Theorem~\ref{theo:weakclosure_intro} relies on a series of auxiliary results. With~\eqref{AK=AKhat_intro} established in Proposition~\ref{lem:Ik=ItildeK}, an argument based on pointwise approximation by piecewise affine functions allows us to deduce a refined representation of elements $\mathcal A_K$, saying that for each $u\in \mathcal A_K$ there exists a Cartesian product $A\times A\subset K$ with $A\subset \R^m$ such that $u\in \mathcal A_{A\times A}$, see Proposition~\ref{prop:3}. 
Another important ingredient in the case $m=1$ is a characterization of the separately convex hull of $\widehat K$, which can be shown to have a particularly simple form. In fact, $\widehat K^{\rm sc}$ is the union of all squares in $\R\times \R$ whose corners are extreme points (in the sense of separate convexification of) $\widehat K$, for details see~Corollary~\ref{cor:square}. 
In higher dimensions, the analogous statement, which could be viewed as a Caratheodory type formula, is in general false (cf.~Remark~\ref{rem:failure}\,c)); the required extra assumptions on $\widehat K^{\rm sc}$ if $m>1$ are introduced to  compensate for this. Combining all the previous arguments 
reduces the proof of Theorem~\ref{theo:weakclosure_intro} to the case when $K$ takes the form of a Cartesian product in $\R^m\times \R^m$. Under this assumption, the desired $L^\infty$-weak$^\ast$ approximation of $u\in \mathcal A_{K^{\rm sc}}$ follows from an explicit construction of periodically oscillating sequences, see~Lemma~\ref{lem:1}. Alternatively, one could use a more abstract approach via Young measures generated by sequences that satisfy an approximate nonlocal constraint, together with a projection step to enforce the exact nonlocal inclusion~\eqref{nonlocal_inclusion}, cf.~Proposition~\ref{prop:YMchar_Cartesian}.   

Conceptually, the study of nonlocal inclusions as in~\eqref{nonlocal_inclusion} shows close parallels with the field of differential inclusions, dealing with problems such as
\begin{align*}
\nabla u\in M\quad \text{ a.e.~in $\Omega$  and $M\subset \R^{m\times n}$ compact}
\end{align*} 
for $u\in W^{1, \infty}(\Omega;\R^m)$ (see e.g.~\cite{Dac08, Rin18} and the references therein), and compensated compactness theory~\cite{Mur78, Tar79}; notice that the latter deal with problems that are all local in nature. The overall challenge is to capture the interplay between pointwise constraints and the structural properties of the vector fields, whether they are gradients, or more generally, $\mathcal A$-free fields with some differential operator $\mathcal A$, or, like here, nonlocal vector fields of the form~\eqref{vu}. Yet, besides these conceptual parallels, nonlocality creates effects that are not typically encountered in local problems, as for instance~\eqref{AK=AKhat_intro} indicates.

In generalization of Theorem~\ref{theo:weakclosure_intro}, we characterize the set of Young measures generated by nonlocal vector fields associated with uniformly bounded sequences $(u_j)_j\subset L^\infty(\Omega;\R^m)$, cf.~\eqref{vu}; 
indeed, if $(u_j)_j$ 
generates the Young measure $\nu=\{\nu_x\}_{x\in \Omega}$, the sought-after set consists of all the product measures $\Lambda=\{\Lambda_{(x,y)}\}_{(x,y)\in \Omega\times \Omega} = \{\nu_x\otimes \nu_y\}_{(x,y)\in \Omega\times \Omega}$ with ${\rm supp\,} \Lambda$ contained almost everywhere in a Cartesian subset of $K$, see~Theorem~\ref{theo:YMcharacterization} for the precise statement. Interpreted in the context of indicator functionals, the latter yields 
a Young measure relaxation result for a class of unbounded functionals (defined precisely in~\eqref{extended}), extending part of a recent work by Bellido \& Mora-Corral~\cite[Section~6]{BMC18}, cf.~Section~\ref{sec:YMrelaxation}. 

The next theorem collects the main results of this paper regarding nonlocal supremal functionals.
 In contrast to the theory of double-integral functionals,
we show here that relaxation of nonlocal supremal functionals is structure preserving, in the sense that it is again of nonlocal supremal type. For simplicity, we formulate the result here in the scalar case; for the extension to the vectorial setting (under additional conditions), we refer to Corollary~\ref{cor} and Remark~\ref{rem:Jwlsc_m}. 

	\begin{Theorem}\label{theo:main}
Let $J$ be as in~\eqref{ourfunct} and $W:\R\times \R\to \R$ be lower semicontinuous and coercive, i.e., $W(\xi, \zeta)\to \infty$ as $|(\xi, \zeta)|\to \infty$. 
\begin{itemize}
\item[$(i)$] 
The functional $J$ is $L^\infty$-weakly$^\ast$ lower semicontinuous if and only if $\widehat{W}$ is separately level convex, where $\widehat{W}$, defined in~\eqref{defWhat}, is the density resulting from diagonalization and symmetrization of $W$. 
\item[$(ii)$] The relaxation $J^{\rm rlx}$ of $J$ is given by the nonlocal supremal functional of the form~\eqref{ourfunct} with supremand $\widehat{W}^{\rm slc}$, which is the separately level convex envelope of $\widehat W$.
\end{itemize}
	\end{Theorem}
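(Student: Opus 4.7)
The overall plan is to reduce both statements to their counterparts for nonlocal indicator functionals via the sublevel-set correspondence of Proposition~\ref{propequivsupremalunbounded}, and then apply Theorem~\ref{theo:weakclosure_intro} together with its scalar reformulations (Corollary~\ref{cor:Khatsc}, Corollary~\ref{cor:chara_wlscp_indicator}). The starting observation is that, for every $t \in \R$,
\[
\{u \in L^\infty(\Omega;\R) : J(u) \leq t\} = \A_{L_t}, \qquad L_t := \{(\xi,\zeta) \in \R\times \R : W(\xi,\zeta) \leq t\},
\]
where each $L_t$ is compact by lower semicontinuity and coercivity of $W$, and $\widehat{L_t} = \{\widehat{W}\leq t\}$ by the very definition of $\widehat{W}$ in~\eqref{defWhat}.

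For part $(i)$, $J$ is $L^\infty$-weakly$^*$ lsc iff every sublevel set $\A_{L_t}$ is weakly$^*$ closed, and by Corollary~\ref{cor:Khatsc} this is equivalent in the scalar case to separate convexity of each $\widehat{L_t}$, i.e.~to separate level convexity of $\widehat{W}$. For part $(ii)$, define $J^{\rm slc}(u) := \supess_{(x,y)\in \Omega\times\Omega} \widehat{W}^{\rm slc}(u(x),u(y))$. The inequality $J^{\rm rlx} \geq J^{\rm slc}$ follows from $J^{\rm slc}\leq J$ (because $\widehat{W}^{\rm slc}\leq \widehat{W}$ and $J = \supess \widehat{W}(u(\cdot),u(\cdot))$) together with the weak$^*$ lower semicontinuity of $J^{\rm slc}$, which is part $(i)$ applied to the supremand $\widehat{W}^{\rm slc}$, after checking that diagonalization and symmetrization are compatible with the separately level convex envelope so that $\widehat{\widehat{W}^{\rm slc}} = \widehat{W}^{\rm slc}$. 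For the converse $J^{\rm rlx}\leq J^{\rm slc}$, fix $u$, set $s_0 := J^{\rm slc}(u)$, and take any $t > s_0$. Comparing $\widehat{W}^{\rm slc}$ with the auxiliary function $V(\xi,\zeta) := \inf\{s : (\xi,\zeta) \in (\widehat{L_s})^{\rm sc}\}$, which is separately level convex and no larger than $\widehat{W}$, one obtains $V \leq \widehat{W}^{\rm slc}$ and hence
\[
\{\widehat{W}^{\rm slc}\leq s_0\} \subset \{V \leq s_0\} = \bigcap_{s > s_0}(\widehat{L_s})^{\rm sc} \subset (\widehat{L_t})^{\rm sc},
\]
so that $u \in \A_{(\widehat{L_t})^{\rm sc}}$. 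Theorem~\ref{theo:weakclosure_intro} identifies this set as the weak$^*$ closure of $\A_{L_t}$, producing a sequence $(u_j)\subset \A_{L_t}$ with $u_j\weakstar u$ and $J(u_j)\leq t$. Therefore $J^{\rm rlx}(u) \leq t$, and letting $t\downarrow s_0$ concludes.

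The principal technical hurdle I anticipate is the sublevel-set bookkeeping for the separately level convex envelope: in particular, the compatibility $\widehat{\widehat{W}^{\rm slc}} = \widehat{W}^{\rm slc}$ needed to apply part $(i)$ to $\widehat{W}^{\rm slc}$, and the broader verification that the diagonal and symmetric structure propagates through the formation of separately convex hulls at the level of sublevel sets (so that $(\widehat{L_s})^{\rm sc}$ is itself closed and its preimages under $V$ are the right objects). Once these compatibility checks are secured, everything else reduces mechanically to the indicator-functional theory already developed in the earlier sections of the paper.
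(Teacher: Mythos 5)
Your proof is correct and follows essentially the same route as the paper's: part $(i)$ is exactly the combination of Proposition~\ref{propequivsupremalunbounded} with Corollary~\ref{cor:Khatsc}, and in part $(ii)$ your auxiliary function $V$ together with the identity $\{V\le c\}=\bigcap_{s>c}(\widehat{L_s})^{\rm sc}=(\widehat{L_c})^{\rm sc}$ (via Lemma~\ref{lem:intersectionsKj}) is precisely the paper's proof of~\eqref{slcenvelope=}, which also yields the lower semicontinuity and coercivity of $\widehat W^{\rm slc}$ needed to apply part $(i)$ for the lower bound. The only cosmetic difference is in the upper bound: the paper extracts a diagonal sequence from approximations at levels $c_k\searrow s_0$ via Attouch's lemma, whereas you bound $J^{\rm rlx}(u)\le t$ separately for each fixed $t>s_0$ and then let $t\downarrow s_0$, which sidesteps the diagonalization.
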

Referring back to the beginning of the introduction, we stress the link between nonlocal supremal functionals and nonlocal double-integral functionals via $L^p$-approximation; if $W=\widehat W$ is separately level convex, this can be made rigorous by imitating the arguments by Champion, De Pascale \& Prinari in~\cite[Theorem~3.1]{CDP04}. 
	
 As an outlook on interesting future research beyond the scope of this work, we would like to mention in particular the proof of a characterization result for the $L^\infty$-weak$^\ast$ closure of $\mathcal A_K$ in general dimensions without extra assumptions on $K$, or the extension to our theory to inhomogeneous nonlocal functionals. 
		
The paper is organized as follows. First, we collect some preliminaries in Section~\ref{sec:preliminaries}; these include subsections on frequently used notation, auxiliary results for Young measures, as well as background on the theories of both supremal and nonlocal double-integral functionals. After introducing and discussing the notion of separate level convexity in Section~\ref{3}, we investigate the interaction of separate convexification of sets with their diagonalization and symmetrization in Section~\ref{sec:diagonal}. In Section~\ref{sec:nonlocal_inclusions}, we turn to the analysis of nonlocal inclusions; more precisely, Subsection~\ref{subsec:alternative} provides alternative representations of $\mathcal A_K$,
Subsection~\ref{subsec:asymptotics} contains the proof of Theorem~\ref{theo:weakclosure_intro}, and Subsection~\ref{subsec:YM_characterization} is concerned with the characterization of Young measures generated by sequences of nonlocal vector fields. 
In Section~\ref{sec:indicator}, we reformulate the insights about nonlocal inclusions in terms of nonlocal indicator functionals (see Subsections~\ref{subsec:51} and~\ref{sec:YMrelaxation}), and discuss the connection between different notions of nonlocal convexity for extended-valued functionals (see Subsection~\ref{subsec:nonlocal_convexity}). The main theorems on lower semicontinuity and relaxation of nonlocal supremal functionals, which address the questions (Q1) and (Q2), are established in Section~\ref{6}. To illustrate the theory, we finally present a few examples of nonlocal supremal functionals with different multiwell supremands in Subsection~\ref{subsec:examples}, and determine explicitly the corresponding relaxation formulas. 

	\section{Preliminaries}\label{sec:preliminaries}
			In this section, we fix notations and recall some well-known results that will be exploited in the remainder of the paper.

	\subsection{Notation}\label{not} 
	In the following, $m$ and $n$ are natural numbers. For any vector $\xi\in \mathbb R^m$, let $\xi_i$, $i=1,\dots, m$, denote its components,  
	and $|\xi|= (\sum_{i=1}^m\xi_i^2)^{\frac{1}{2}}$ its Euclidean norm. By $B_r(\xi)$, we denote the closed (Euclidean) ball centered in $\xi\in \R^m$ with radius $r>0$.
For two vectors $\alpha, \beta\in \R^m$, we introduce the generalized closed interval 
 \begin{align}\label{generalizedinterval}
 [\alpha, \beta] :=\{t\alpha + (1-t)\beta: t\in [0,1]\} \subset \R^m,
 \end{align} 
 and analogously, the open and half open segments $[\alpha, \beta[$, $]\alpha, \beta]$, and $]\alpha, \beta[$;  moreover, let us define 
 \begin{align}\label{cubesQalphabeta}
 Q_{\alpha, \beta}:=[\alpha, \beta]\times [\alpha, \beta] \subset \R^m\times \R^m.
 \end{align}

Our notation for the complement of a set $A\subset \R^{m}$ is $A^c=\R^{m}\setminus A$, whilst $A^{\rm co}$ stands for the convex hull of $A$. 
Moreover, we denote the characteristic function of $A\subset \R^m$ in the sense of convex analysis by $\chi_A$ and the indicator function of $A$ by $\mathbbm{1}_A$, i.e.~
	\begin{equation}\label{indicator}
	\chi_A (\xi):=\left\{
	\begin{array}{ll} 0 &\hbox{ if $\xi\in A$},\\
	\infty &\hbox{ otherwise,}
	\end{array}
	\right.
	\quad \text{and}\quad 	\mathbbm{1}_A (\xi):=\left\{
	\begin{array}{ll} 1 &\hbox{ if $\xi\in A$},\\
	0 &\hbox{ otherwise.}
	\end{array}
	\right.
	\end{equation}
	The distance from a point $\beta \in \mathbb R^m$ to a set $A \subset \mathbb R^m$ is 
	${\rm dist}(\beta, A):=\inf_{\alpha \in A}|\alpha-\beta|$, and the
  Hausdorff distance between two non-empty sets $A, B \subset \R^m$ is given by 
	\begin{align}\label{Hausdorff}
	d_H^m(A, B) := {\rm sup}_{\alpha\in A}\,  {\rm dist}(\alpha, B) + {\rm sup}_{\beta\in B}\, {\rm dist}(\beta, A).
	\end{align} 
 
Further, we denote by $\R_\infty$ the set $\R\cup\{\infty\}$. 
 For every $c \in \mathbb R$ and every function $f:\mathbb R^m \to \mathbb R_\infty$, 
\begin{align*}
L_c(f):=\{\xi \in \mathbb R^m: f(\xi)\leq c\} \subset \R^m
\end{align*}
is the sublevel set of $f$ at level $c$.

 Let $E\subset A\times A$ with $A \subset \mathbb R^m$; then $\pi_1(E)$ and $\pi_2(E)$ stand for the the projection of $E$ onto the first and second component, respectively, that is
	 \begin{align*}
	 \pi_1(E) = \bigcup_{(\alpha, \beta)\in E} \alpha \quad \text{and} \quad \pi_2(E)= \bigcup_{(\alpha, \beta)\in E} \beta.
	 \end{align*}
To denote the sections of $E$ in the first and second argument at $\beta\in A$, we use a notation with letters in Frakture, precisely, 
\begin{align*}
\mathfrak{E}_1^\beta :=\{\alpha\in A: (\alpha, \beta)\in E\}\quad \text{and}\quad \mathfrak{E}_2^\beta :=\{\alpha\in A: (\beta, \alpha)\in E\}.
\end{align*} 
	If $E$ is symmetric, meaning $E=E^T$ with $E^T:=\{(\alpha, \beta)\in A\times A: (\beta,\alpha)\in E\}$, then $\pi_1(E) = \pi_2(E)$ and $\mathfrak{E}_1^\beta=\mathfrak{E}_2^\beta$ for all $\beta\in A$, and we simply write $\pi(E)$ and $\mathfrak{E}^\beta$. 
	
Notice that throughout the manuscript, we use the identification $\R^m\times\R^m\cong \R^{2m}$ without explicit mention.

Let $C_0(\R^m)$ be the closure with respect to the maximum norm of the space of smooth, real-valued functions on $\R^n$ with compact support.
 By the Riesz representation theorem (see e.g.~\cite[Theorem 1.54]{AFP00}), the dual space of $C_0(\R^m)$ can be identified  via the duality pairing $\langle \mu, \varphi \rangle = \int_{\R^m} \varphi(\xi)\, d\mu(\xi)$ with the space $\mathcal M(\R^m)$ of finite signed Radon measures on $\R^m$. 

	For the class of probability measures defined on the Borel sets of $\R^m$, we write $\mathcal Pr(\R^m)$. The barycenter of $\mu\in \mathcal Pr(\R^m)$ is defined by 
	\begin{align}\label{barycenter}
	[\mu] := \langle \mu, {\rm id}\rangle = \int_{\R^m} \xi\, d\mu(\xi),
	\end{align} 
	and ${\rm supp \,\mu}$ stands for the support of $\mu$. 
	If $f: \R^m\to \R$ and $\mu$ is a probability measure, or more generally, a positive measure, on the Borel sets of $\R^m$, the $\mu$\text{-}essential supremum of $f$ over the set $A\subset \R^m$ is defined as 
	\begin{align*}
	\mu\text{-}\supess_{\xi\in A} f(\xi) := \inf_{\mathcal N \subset A, \mu(\mathcal N)=0}\sup_{\xi\in A \setminus\mathcal N} f(\xi).
	\end{align*}
We use the notation $\nu\otimes \mu$ to denote the product measure of two measures $\nu$ and $\mu$. 
 By $U$ we denote a generic measurable (Lebesgue or Borel) subset of $\mathbb R^m$. 
The Lebesgue measure of a Lebesgue measurable set $U\subset\mathbb R^n$ is denoted by ${\mathcal L}^n(U)$. We skip the Lebesgue measure symbol $\mathcal L^n$ whenever it is clear from the context, for example, we often write simply~`a.e.~in $U$' instead of `$\mathcal L^n$-a.e.~in $U$'. 
	
Unless mentioned otherwise, $\Omega$ is always a non-empty, open and bounded subset of $\mathbb R^n$. We use standard notation for $L^p$-spaces with $p\in [1,\infty]$; in particular, for a sequence of functions $(u_j)_j\subset L^p(\Omega;\R^m)$ and $u\in L^p(\Omega;\R^m)$, we write $u_j\weakly u$ in $L^p(\Omega;\R^m)$ with $p\in [1, \infty)$ and $u_j \weaklystar u$ in $L^\infty(\Omega;\R^m)$ to express weak and weak$^\ast$ convergence of $(u_j)_j$ to $u$, respectively.  
	In the following, we often deal with functions $u \in L^p(\Omega;\R^m)$ and their composition with Borel measurable functions $f:\R^m\to \R$.
The $\mathcal L^n$-essential supremum of $f(u)$, whenever $f$ is non-negative, 
corresponds to the $L^\infty$-norm of $f(u)$.
Depending on the context, we write either $\mathcal{L}^n\text{-}\supess_{x\in \Omega} f(u(x))$,  $\|f(u)\|_{L^\infty(\Omega)}$, or simply, $\supess_{x\in \Omega} f(u(x))$. 		
	
	\subsection{Young measures}\label{subsec:YM}
	Young measures are an important technical tool in nonlinear analysis, as they encode refined information on the oscillation behavior of weakly converging sequences. To make this article self-contained, we briefly recall some basics from this theory, focusing on what will be used in the sequel. For a more detailed introduction to the topic, we refer to the broad literature, e.g.~\cite[Chapter~8]{FoL07}, \cite{Ped97a}, \cite[Section~4]{Rin18}.

	Let $U\subset \mathbb R^n$ be a Lebesgue measurable set with finite measure. 
By definition, a Young measure $\nu=\{\nu_x\}_{x\in U}$ is an element of the space $L^\infty_w(U;\mathcal M(\R^m))$ of essentially bounded, weakly$^\ast$ measurable maps $U\to \mathcal M(\R^m)$, which is isometrically isomorphic to the dual of $L^1(U;C_0(\R^m))$, such that $\nu_x:=\nu(x) \in \mathcal Pr(\R^m)$ for $\mathcal L^n$-a.e.~$x \in U$.  
One calls $\nu$ 
 homogeneous if there is a measure $\nu_0 \in  \mathcal Pr(\mathbb R^m)$ such that $\nu_x = \nu_0$ for $\mathcal L^n$- a.e.~$x \in U$.
	
	A sequence $(z_j)_j$ of measurable functions $z_j: U\to \R^m$ is said to generate a Young measure $\nu \in L^\infty_w(U;\mathcal Pr(\R^m))$ 
	if for every $h \in L^1(U)$ and $\varphi \in C_0(\R^m)$,
	$$
	\lim_{j\to\infty}\int_U h(x)\varphi(z_j(x))\,dx = \int_U h(x)\int_{\R^m}\varphi(\xi)d \nu_x(\xi)\,dx = \int_U h(x) \langle \nu_x, \varphi \rangle\,dx, 
	$$  
	or $\varphi(z_{j})\overset{\ast}{\rightharpoonup} \langle \nu_x,\varphi\rangle$ for all $\varphi \in C_0(\mathbb R^m)$; 
in formulas,
	\begin{align*}
	z_j\stackrel{YM}{\longrightarrow} \nu \qquad \text{as $j\to \infty$.}
	\end{align*}

	The following result is often referred to as the fundamental theorem for Young measures, see e.g.~\cite{Bal89}, \cite[Theorems~8.2 and~8.6]{FoL07}, \cite[Theorem~4.1, Proposition~4.6]{Rin18}.

	\begin{Theorem} 
		\label{FTYM}
	Let $(z_j)_j \subset L^p(U;\R^m)$ with $1\leq p\leq \infty$ be a uniformly bounded sequence. Then  there exists a subsequence of 
	$(z_{j})_j$ (not relabeled) and a Young measure $\nu\in L_w^\infty(U;\mathcal M(\R^m))$ such that $z_j\stackrel{YM}{\longrightarrow} \nu$.  Moreover,
		\begin{itemize}
		\item[$(i)$] for any continuous integrand $f : \mathbb R^m \to \R$ with the property that $\bigl(f(z_j)\bigr)_j\subset L^1(U)$ is equiintegrable, it holds that 
						$$
			f(z_j)\rightharpoonup \int_{\mathbb R^m}f(\xi) \,d\nu(\xi) = \langle \nu, f\rangle \quad \hbox{ in }L^1(U); 
			$$
			\item[$(ii)$] for any lower semicontinuous $f : \mathbb R^m \to \R_\infty$ bounded from below, 
			$$\liminf_{j\to \infty}\int_U f(z_{j} (x)) \,dx \geq\int_{U}\int_{\mathbb R^m}f(\xi) d\nu_x(\xi)\,dx = \int_U \langle \nu_x, f\rangle \, dx;$$
\item[$(iii)$]				if $K \subset \mathbb R^m$ is a compact subset, then ${\rm supp\,} \nu_x \in K$ for $\mathcal L^n$-a.e.~$x \in U$ if and only if  ${\rm dist} (z_j, K)\to 0$ in measure.
		\end{itemize} 
		
	\end{Theorem}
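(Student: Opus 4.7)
My plan is to follow the classical duality-based approach. I would exploit the isometric isomorphism $L^\infty_w(U;\mathcal M(\R^m))\cong (L^1(U;C_0(\R^m)))^\ast$ given by the pairing $\langle \mu, h\otimes\varphi\rangle = \int_U h(x)\langle \mu_x,\varphi\rangle\,dx$. Associating to each $z_j$ the elementary Young measure $\nu^j:=\{\delta_{z_j(x)}\}_{x\in U}$ with norm at most $1$ in this dual, sequential Banach--Alaoglu (applicable thanks to separability of $L^1(U;C_0(\R^m))$) produces a subsequence and a weak$^\ast$ limit $\nu$. To verify that $\nu_x\in \mathcal Pr(\R^m)$ for a.e.~$x\in U$, I would test against products $h\otimes\varphi_k$ with $0\leq h\in L^1(U)$ and $\varphi_k\in C_c(\R^m)$ satisfying $0\leq\varphi_k\leq 1$ and $\varphi_k\nearrow 1$; the tightness of $\{\nu_x\}_x$, which prevents mass escaping to infinity, comes from the uniform $L^p$-bound on $(z_j)_j$ via Chebyshev's inequality.

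For $(i)$, given continuous $f$ with $(f(z_j))_j$ equiintegrable, I would approximate $f$ by truncations $f\eta_R$ with $\eta_R\in C_c(\R^m)$ equal to $1$ on $\{|\xi|\leq R\}$. Since $f\eta_R\in C_0(\R^m)$, Young-measure convergence immediately gives $\int_U h(x) f(z_j(x))\eta_R(z_j(x))\,dx\to\int_U h(x)\langle\nu_x,f\eta_R\rangle\,dx$ for every $h\in L^\infty(U)$. Equiintegrability of $(f(z_j))_j$ together with tightness of $\{\nu_x\}_x$ then allows me to send $R\to\infty$ uniformly in $j$, delivering $f(z_j)\weakly\langle\nu_x,f\rangle$ in $L^1(U)$.

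For $(ii)$, after subtracting the lower bound I may assume $f\geq 0$. I would approximate $f$ from below by the increasing sequence of bounded Lipschitz functions $f_k(\xi):=\min\bigl(\inf_{\zeta\in\R^m}(f(\zeta)+k|\xi-\zeta|),\,k\bigr)$, which converges pointwise to $f$ by lower semicontinuity. Each $f_k$ is bounded, so part $(i)$ applies to yield $\int_U f_k(z_j)\,dx\to\int_U\langle\nu_x,f_k\rangle\,dx$, and two applications of monotone convergence produce the liminf inequality. For $(iii)$, the forward implication tests with the bounded continuous function $g(\xi):=\min(\dist(\xi,K),1)$ vanishing on $K$: the hypothesis ${\rm supp}\,\nu_x\subset K$ forces $\langle\nu_x,g\rangle=0$, and a cutoff plus tightness argument shows $\int_U g(z_j)\,dx\to 0$, upgrading to convergence in measure of $\dist(z_j,K)$ by nonnegativity. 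Conversely, if $\dist(z_j,K)\to 0$ in measure, then $\varphi(z_j)\to 0$ in $L^1(U)$ for any $\varphi\in C_0(\R^m)$ supported in $K^c$, so $\langle\nu_x,\varphi\rangle=0$ a.e.; exhausting $K^c$ by countably many such $\varphi$ concludes.

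The main hurdle I expect is the careful handling of tightness and equiintegrability when transitioning from compactly supported test functions (where the duality gives weak$^\ast$ convergence essentially for free) to general continuous $f$ that may be unbounded. In particular, ensuring that no mass escapes in the limit and that the exceptional null set where $\nu_x$ might fail to be a probability measure is genuinely negligible both rely crucially on the uniform $L^p$-bound on $(z_j)_j$ and the Chebyshev-type tightness estimate it supplies.
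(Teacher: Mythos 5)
The paper does not prove Theorem~\ref{FTYM}: it is quoted as a standard result (the fundamental theorem of Young measures) with references to Ball, Fonseca--Leoni and Rindler, so there is no in-paper argument to compare against. Your proposal reproduces the classical duality proof found in those references and is correct: Banach--Alaoglu in $(L^1(U;C_0(\R^m)))^\ast$ applied to the elementary Young measures $\{\delta_{z_j(x)}\}_x$, combined with the Chebyshev tightness estimate coming from the uniform $L^p$-bound to rule out loss of mass, gives generation and $\nu_x\in\mathcal Pr(\R^m)$ a.e.; truncation plus equiintegrability gives $(i)$; monotone approximation by the inf-convolutions together with $(i)$ and monotone convergence gives $(ii)$; and both implications of $(iii)$ follow as you describe. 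Two small points to tighten, neither of which is a genuine gap. First, in $(i)$ the vanishing of the limit-side tail $\int_U\langle\nu_x,|f|\,\mathbbm{1}_{\{|\xi|>R\}}\rangle\,dx$ is not a consequence of tightness alone when $f$ is unbounded; it should be deduced from the equiintegrability of $(f(z_j))_j$ via the lower-semicontinuity inequality of part $(ii)$ applied to the lower semicontinuous function $|f|\,\mathbbm{1}_{\{|\xi|>R\}}$ (or an equivalent Fatou argument). Second, in the converse direction of $(iii)$ you should test with $\varphi\in C_c(\R^m)$ whose compact support has positive distance from $K$ (a countable such family suffices to exhaust $K^c$); for a general $\varphi\in C_0(\R^m)$ merely supported in $K^c$ the support need not stay away from $K$, and the conclusion $\varphi(z_j)\to0$ in $L^1(U)$ could fail.
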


In particular,  
  if $(z_j)_j \subset L^p(U;\R^m)$ generates a Young measure $\nu$ 
  and converges weakly($^\ast$) in $L^p(U;\R^m)$  to a limit function $u$, then $[\nu_x] = \langle \nu_x, {\rm id}\rangle = u(x)$ for $\mathcal L^n$-a.e.~$x\in U$.

With the aim of analyzing nonlocal problems, we associate with any function $u\in L^1(\Omega;\R^m)$ the vector field 
\begin{align}\label{vu}
v_u(x,y):=(u(x), u(y)) \quad \text{for $(x,y)\in \Omega\times \Omega$.}
\end{align}
The following lemma, which was established by Pedregal in~\cite[Proposition 2.3]{Ped97}, gives a characterization of Young measures generated by sequences of such nonlocal vector fields. 
\begin{Lemma}
	\label{asprop2.3Pedregal}
	Let $(u_j)_j \subset L^p(\Omega;\R^m)$ with $1 \leq p\leq \infty$ generate a Young measure $\nu=\{\nu_x\}_{x\in \Omega}$, and let $\Lambda = \{\Lambda_{(x,y)}\}_{(x, y)\in \Omega\times \Omega}$ be a family of probability measures on $\mathbb R^m \times \mathbb R^m$. 

Then $\Lambda$ is the Young measure generated by the sequence $(v_{u_j})_j\subset L^p(\Omega\times \Omega;\R^m\times \R^m)$ defined according to~\eqref{vu} 
if and only if
\begin{equation*} 
\Lambda_{(x,y)} = \nu_x \otimes \nu_y \qquad \text{for 
a.e.~$(x, y)\in \Omega\times \Omega$}\end{equation*}
and
\begin{align*}
\begin{cases}
\displaystyle \int_{\Omega }\int_{\mathbb R^m} |\xi|^p \,d \nu_x(\xi)\,dx< \infty, & \hbox { if }p<\infty,\\[0.2cm]
{\rm supp}\,\nu_x \subset  K  \hbox{ for $\mathcal L^n$-a.e.~}x \in \Omega \hbox{ with a fixed compact set $K\subset \R^m$}, & \hbox{ if } p = \infty.
\end{cases}
\end{align*}
\end{Lemma}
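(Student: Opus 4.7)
The plan is to establish both implications via tensor-product test functions and a subsequence-extraction argument.

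For the forward direction, assume $(v_{u_j})_j$ generates $\Lambda$. The key idea is to test against tensor-product functions. Pick $h_1,h_2\in L^1(\Omega)$ and $\varphi,\psi\in C_0(\R^m)$, and write
\begin{equation*}
\int_{\Omega\times\Omega} h_1(x)h_2(y)\,\varphi(u_j(x))\psi(u_j(y))\,dx\,dy
= \Bigl(\int_\Omega h_1(x)\varphi(u_j(x))\,dx\Bigr)\Bigl(\int_\Omega h_2(y)\psi(u_j(y))\,dy\Bigr)
\end{equation*}
by Fubini. Since $\varphi,\psi$ are bounded and continuous, the generation of $\nu$ by $(u_j)_j$ lets me pass to the limit on the right-hand side, while the generation of $\Lambda$ by $(v_{u_j})_j$ applies on the left, yielding
\begin{equation*}
\int_{\Omega\times\Omega} h_1(x)h_2(y)\,\langle \Lambda_{(x,y)},\varphi\otimes\psi\rangle\,dx\,dy
= \int_{\Omega\times\Omega} h_1(x)h_2(y)\,\langle \nu_x,\varphi\rangle\langle\nu_y,\psi\rangle\,dx\,dy.
\end{equation*}
The arbitrariness of $h_1,h_2$ (and the density of $L^1(\Omega)\otimes L^1(\Omega)$ in $L^1(\Omega\times\Omega)$) then gives the pointwise identity $\langle\Lambda_{(x,y)},\varphi\otimes\psi\rangle = \langle\nu_x\otimes\nu_y,\varphi\otimes\psi\rangle$ for a.e.~$(x,y)$. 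Fixing a countable dense subset of $C_0(\R^m)$ and taking the union of exceptional null sets, I conclude via Stone--Weierstrass (tensor products being dense in $C_0(\R^m\times\R^m)$) that $\Lambda_{(x,y)}=\nu_x\otimes\nu_y$ almost everywhere.

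The integrability/support conclusion comes cheaply. If $p<\infty$, I apply Theorem~\ref{FTYM}\,$(ii)$ to the lower semicontinuous integrand $\xi\mapsto|\xi|^p$ along $(u_j)_j$, whose $L^p$-boundedness is inherited from that of $(v_{u_j})_j$; this bounds $\int_\Omega\langle\nu_x,|\cdot|^p\rangle\,dx$ by $\liminf_j\|u_j\|_{L^p}^p$. If $p=\infty$, uniform $L^\infty$-boundedness of $(u_j)_j$ together with Theorem~\ref{FTYM}\,$(iii)$ forces ${\rm supp}\,\nu_x$ into a fixed ball.

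For the reverse direction, suppose the two listed conditions on $\Lambda$ hold. I use the classical Urysohn subsequence principle. First, $(v_{u_j})_j$ is uniformly bounded in $L^p(\Omega\times\Omega;\R^m\times\R^m)$ — for $p<\infty$ this follows from $|v_{u_j}(x,y)|^p\leq C(|u_j(x)|^p+|u_j(y)|^p)$ and Fubini, and for $p=\infty$ it is immediate. Therefore, by Theorem~\ref{FTYM}, every subsequence admits a further subsequence $(v_{u_{j_k}})_k$ generating some Young measure $\widetilde\Lambda$. Applying the already-proven forward direction to this further subsequence yields $\widetilde\Lambda_{(x,y)}=\nu_x\otimes\nu_y=\Lambda_{(x,y)}$ almost everywhere. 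Since every subsequence has a further subsequence converging (in the Young-measure sense) to the same limit $\Lambda$, the entire sequence $(v_{u_j})_j$ must generate $\Lambda$.

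The main technical care lies in the tensor-product density step and in tracking the null sets when reducing a.e.~equalities to the equality of measures. These are standard measure-theoretic moves, but they are the only place where a delicate argument is required; everything else is Fubini, Fatou, or subsequence extraction.
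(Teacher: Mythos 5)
Your proof is correct. The paper does not actually prove this lemma --- it is quoted from Pedregal \cite[Proposition~2.3]{Ped97} --- and your argument (tensor-product test functions $h_1\otimes h_2$ and $\varphi\otimes\psi$, density of such tensors in $L^1(\Omega\times\Omega)$ and, via Stone--Weierstrass, in $C_0(\R^m\times\R^m)$, plus the Urysohn subsequence principle for the converse) is the standard proof of that result; the only implicit point is that the uniform $L^p$-bound on $(u_j)_j$, which you need both to invoke Theorem~\ref{FTYM} and to bound $(v_{u_j})_j$, is tacitly part of the hypothesis of generating a Young measure throughout the paper.
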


\subsection{Supremal functionals and level convexity}\label{subsec:supremal}

Next, we collect some basic properties and useful results from the theory of supremal functionals, i.e.,~functionals $F:L^\infty(\Omega;\mathbb R^m) \to \R_\infty$ given by
\begin{equation}
\label{(1)}
F(u):=\supess_{x\in \Omega} f(u(x)), 
\end{equation}
where $f:\mathbb R^m \to \R_\infty$ is a Borel measurable function bounded from below. For the relevance of $L^\infty$-functionals in optimal control and optimal transport problems, see \cite{Bar99, BBJ} and the references therein; applications in the context of materials science can be found e.g. in \cite{BN, GNP, KL}.

Barron \& Jensen in \cite{BaJ95} and Barron \& Liu in \cite{BaL97} were the first to study necessary and sufficient conditions of supremal functionals as $F$ in~\eqref{(1)}. Assuming that $\Omega\subset \R$ is an interval, they proved that $F$ is sequentially $L^\infty$-weakly$^\ast$ lower semicontinuous if and only if the supremand $f$ is level convex and lower semicontinuous. The same statement holds for general $\Omega\subset \R^n$; see~\cite[Theorem 4.1]{ABP02}, as well as \cite{BJW01} and \cite{Pri09}. 
	\begin{Definition}
		\label{levelconvexity} 
	A function $f: \mathbb R^m \to \R_\infty$ is called level convex
		if all level sets of $f$, that is,~$L_c(f) = \{\xi\in \R^m: f(\xi)\leq c\}$ with $c\in \R$, are convex sets.
	\end{Definition}
Note that level convexity is known in the literature on operational research and convex analysis as quasiconvexity, see e.g.~\cite{Mangasarian94}. To avoid ambiguity with the notion introduced by Morrey~\cite{Mor66} in the context of integral functionals, we have chosen here to use the same terminology as in~\cite{ABP02}.

The following lemma provides different characterizations of level convexity, in particular, in terms of a supremal Jensen type inequality. It can be found e.g.~in \cite[Theorem 30]{Bar99} (under additional lower semicontinuity hypotheses) and partially in \cite[Lemma 2.4]{BJL96} and \cite[Theorem 1.2]{BJW01}; see also \cite[Definition 2.1 and Theorem 2.4]{RiZ14} for a statement in wider generality. 
\begin{Lemma}
\label{Theorem24RZ} 
Let 
$f:\mathbb  R^m \to \R_\infty$ be a Borel measurable function. 
Then the following statements are equivalent: 
\begin{itemize}
\item[$(i)$] $f$ is level convex;
\item [$(ii)$] for every $\xi, \zeta \in  \mathbb R^m$ and $t \in [0, 1]$ it holds that 
		$$f(t\xi + (1-t)\zeta) \leq \max\{f(\xi), f(\zeta)\};$$
	
\item[$(iii)$] for any open set $\Omega\subset \R^n$ with $\mathcal L^n(\Omega)<\infty$ and every $\varphi\in L^1(\Omega;\R^m)$ one has that 
$$
f\left(\frac{1}{\mathcal L^n(\Omega)}\int_{\Omega}\varphi \,dx \right) \leq  \supess_{x \in \Omega}f(\varphi(x));
$$	
\item[$(iv)$]  for every $\mu \in \mathcal Pr(\R^m)$,
		$$	f([\mu])  \leq \mu\text{-}\supess_{\xi \in \R^m} f(\xi).$$ 
\end{itemize}
\end{Lemma}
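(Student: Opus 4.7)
The plan is to close the chain of implications (i) $\Leftrightarrow$ (ii), (iv) $\Rightarrow$ (iii), (iii) $\Rightarrow$ (ii), and (i) $\Rightarrow$ (iv), which together yield all four equivalences. The step (i) $\Leftrightarrow$ (ii) is immediate from Definition~\ref{levelconvexity}: for $\xi,\zeta \in \R^m$ set $c := \max\{f(\xi),f(\zeta)\}$, so that $\xi,\zeta \in L_c(f)$, and convexity of $L_c(f)$ forces $t\xi + (1-t)\zeta \in L_c(f)$; the converse is equally direct, since (ii) applied to any two elements of a sublevel set $L_c(f)$ places their convex combination back in $L_c(f)$.

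For (iii) $\Rightarrow$ (ii), I would test (iii) on $\Omega = (0,1)$ with the simple function $\varphi = \xi\,\mathbbm{1}_{(0,t)} + \zeta\,\mathbbm{1}_{(t,1)}$, whose mean equals $t\xi + (1-t)\zeta$ and whose composition with $f$ has essential supremum $\max\{f(\xi),f(\zeta)\}$. For (iv) $\Rightarrow$ (iii), given $\varphi \in L^1(\Omega;\R^m)$, I would introduce the pushforward measure $\mu := \frac{1}{\mathcal L^n(\Omega)}\,\varphi_{\#}(\mathcal L^n\res\Omega) \in \mathcal Pr(\R^m)$; a change of variables identifies $[\mu] = \frac{1}{\mathcal L^n(\Omega)}\int_\Omega \varphi\,dx$ and $\mu\text{-}\supess_{\xi\in\R^m} f(\xi) = \supess_{x\in\Omega} f(\varphi(x))$, so applying (iv) to this $\mu$ gives (iii).

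The substantive step is (i) $\Rightarrow$ (iv). Given $\mu \in \mathcal Pr(\R^m)$ with finite first moment, set $c := \mu\text{-}\supess_\xi f(\xi)$; if $c = +\infty$ the inequality is trivial, so assume $c<\infty$. Because $\{f > c+1/k\}$ is $\mu$-null for every $k \in \Nb$, one has $\mu(L_c(f)) = 1$, and $L_c(f)$ is convex by (i). It remains to show that the barycenter $[\mu]$ belongs to $L_c(f)$, i.e., that a probability measure concentrated on a convex Borel set has its barycenter inside that set. This is the main obstacle: $L_c(f)$ need not be closed when $f$ is merely Borel, so a plain Hahn--Banach separation only places $[\mu]$ in $\overline{L_c(f)}$. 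I would circumvent this by induction on the dimension of the affine hull of $\mathrm{supp}\,\mu$, using the dichotomy that either $[\mu]$ lies in the relative interior of $L_c(f)$ (in which case one concludes directly), or $\mu$ concentrates on a proper face of $L_c(f)$, which is itself a convex Borel set in a strictly lower-dimensional affine subspace and to which the induction hypothesis applies.
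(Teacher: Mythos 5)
Your argument is correct, and it is worth noting that the paper itself gives no proof of this lemma: it simply refers to \cite[Theorem 30]{Bar99}, \cite[Lemma 2.4]{BJL96}, \cite[Theorem 1.2]{BJW01} and \cite[Theorem 2.4]{RiZ14}. Your chain (i)$\Leftrightarrow$(ii), (iii)$\Rightarrow$(ii), (iv)$\Rightarrow$(iii) via the normalized pushforward $\mu=\mathcal L^n(\Omega)^{-1}\varphi_{\#}(\mathcal L^n\res\Omega)$, and (i)$\Rightarrow$(iv) closes all the equivalences, and the identification of $\mu\text{-}\supess f$ with $\supess_x f(\varphi(x))$ is easily verified in both directions using that null sets correspond under $\varphi^{-1}$. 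The genuinely substantive point is exactly the one you isolate in (i)$\Rightarrow$(iv): since $f$ is only Borel, $L_c(f)$ with $c=\mu\text{-}\supess f$ is a convex Borel set that need not be closed, and the barycenter of a probability measure carried by a convex Borel set does lie in that set; your induction on dimension is the standard proof of this fact (if $[\mu]\notin\mathrm{ri}(L_c(f))$, proper separation yields a functional $\ell$ with $\sup_{L_c(f)}\ell\le\ell([\mu])$, whence $\ell=\ell([\mu])$ $\mu$-a.e.\ and $\mu$ concentrates on the exposed face $L_c(f)\cap\{\ell=\ell([\mu])\}$ of strictly smaller affine dimension). This is precisely where the version in \cite[Theorem 30]{Bar99} invokes lower semicontinuity to make the sublevel sets closed and reduce to Hahn--Banach, so your argument in fact covers the general Borel case as stated. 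Two cosmetic remarks: for $t\in\{0,1\}$ the test function in (iii)$\Rightarrow$(ii) degenerates, but those cases are trivial; and since (iii) is quantified over open subsets of $\R^n$, you should take $\Omega=(0,1)^n$ and split it into a measurable set of measure $t$ and its complement rather than literally using $(0,1)$.
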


The following auxiliary result is a slight modification of~\cite[Theorem~34]{Bar99} and is based on $L^p$-approximation in combination with the lower semicontinuity type result for Young measure in~Theorem~\ref{FTYM}. 

\begin{Lemma}
	\label{thm34Barronmon} 
Let 
$f:\R^m\to \R_\infty$ a lower semicontinuous function bounded from below. 
	Further, let $(u_j)_j$ be a uniformly bounded sequence of functions in $L^\infty(\Omega;\mathbb R^m)$ generating a Young measure $\nu=\{\nu_x\}_{x\in \Omega}$. 	
	Then,
	\begin{equation*}
	\liminf_{j \to \infty}\supess_{x\in \Omega} f(u_j)  \geq \supess_{x\in \Omega}\bar f,
	\end{equation*}
	where
	$\bar f(x) := \nu_x\text{-}\supess_{\xi\in \R^m} f(\xi) 
	$
	for $x\in\Omega$.
\end{Lemma}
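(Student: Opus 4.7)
The plan is to approximate the supremum norm by $L^p$-norms, apply the Young measure lower semicontinuity result from Theorem~\ref{FTYM}(ii) at each finite exponent $p$, and then pass to the limit $p\to \infty$. As a preliminary reduction, since $f$ is bounded from below, say $f\geq -C$ for some $C\geq 0$, I would replace $f$ by $f+C$, which is still lower semicontinuous and $\R_\infty$-valued; both sides of the claimed inequality shift by the same constant, so we may assume $f\geq 0$.

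First, for each fixed $p\in [1,\infty)$, the function $f^p:\R^m\to\R_\infty$ is non-negative and lower semicontinuous (composition with the continuous increasing map $t\mapsto t^p$ on $[0,\infty]$). Theorem~\ref{FTYM}(ii), applied to the uniformly bounded sequence $(u_j)_j$ and the integrand $f^p$, yields
\begin{equation*}
\liminf_{j\to\infty}\int_\Omega f(u_j(x))^p\,dx \;\geq\; \int_\Omega \langle \nu_x, f^p\rangle\,dx.
\end{equation*}
Combining with the elementary bound $\int_\Omega f(u_j)^p\,dx \leq \mathcal L^n(\Omega)\,\supess_{x\in\Omega} f(u_j(x))^p$, taking $p$-th roots, and recalling that $\liminf$ is preserved under the monotone map $t\mapsto t^{1/p}$, gives
\begin{equation*}
\liminf_{j\to\infty}\supess_{x\in\Omega} f(u_j(x)) \;\geq\; \mathcal L^n(\Omega)^{-1/p}\left(\int_\Omega \langle \nu_x, f^p\rangle\,dx\right)^{1/p}.
\end{equation*}

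Next, I would pass to the limit $p\to\infty$ in the right-hand side. Introduce the finite positive measure $\mu$ on $\Omega\times\R^m$ of total mass $\mathcal L^n(\Omega)$ defined by $d\mu(x,\xi) := d\nu_x(\xi)\,dx$, so that $\int_\Omega \langle \nu_x, f^p\rangle\,dx = \int_{\Omega\times\R^m} f(\xi)^p\,d\mu$. By the standard identity $\|g\|_{L^p(\mu)}\to \|g\|_{L^\infty(\mu)}$ as $p\to\infty$ for non-negative measurable $g$ on a finite measure space (which holds also in the extended sense $\|g\|_{L^\infty(\mu)}=\infty$), the right-hand side converges to $\mu\text{-}\supess_{(x,\xi)\in\Omega\times\R^m} f(\xi)$. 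Since also $\mathcal L^n(\Omega)^{-1/p}\to 1$, it remains only to identify this quantity with $\supess_{x\in\Omega}\bar f(x)$: a short Fubini-style argument shows that $\nu_x(\{f>M\})=0$ for a.e.~$x\in\Omega$ if and only if $\mu(\{f>M\})=0$, hence $\mu\text{-}\supess f = \supess_{x\in\Omega}\bar f(x)$, and the conclusion follows.

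The proof is essentially a routine $L^p$-to-$L^\infty$ upgrade, so no major obstacle arises; the only delicate point is the careful treatment of possibly $\R_\infty$-valued integrands (so that both the application of Theorem~\ref{FTYM}(ii) to $f^p$ and the convergence of $L^p$-norms to $L^\infty$-norms remain valid when $\bar f$ may equal $\infty$ on a set of positive measure), and the measure-theoretic identification of $\mu\text{-}\supess f$ with $\supess_x \bar f(x)$.
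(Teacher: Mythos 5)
Your proof is correct and follows essentially the same strategy as the paper's: reduce to $f\geq 0$, apply Theorem~\ref{FTYM}\,(ii) to $f^p$, and upgrade from $L^p$ to $L^\infty$ as $p\to\infty$. The only difference is bookkeeping in the final limit — the paper localizes to a set $S'$ of positive measure where the $L^p(\nu_x)$-norms are uniformly close to $\|\bar f\|_{L^\infty(\Omega)}$, whereas you work with the product measure $d\mu = d\nu_x\,dx$ and invoke the convergence of $L^p(\mu)$-norms to the $L^\infty(\mu)$-norm, which is an equally valid (arguably cleaner) way to handle the same uniformity issue.
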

\begin{proof} 
We give the details here for the reader's convenience, referring to~\cite{Bar99} for the original proof.  
Up to a translation argument, there is no loss of generality in assuming that $f$ is non-negative. 

Let $\varepsilon>0$ be fixed, and choose a set $S\subset \Omega$ with positive Lebesgue measure such that $\bar{f}(x)\geq \|\bar{f}\|_{L^\infty(\Omega)}-2\varepsilon$ for all $x\in S$.
Next, we show that there exists a measurable subset $S'\subset S$ with $\mathcal L^n(S')>0$ such that
\begin{align}\label{est1}
\Bigl(\int_{\R^m} |f(\xi)|^p \,d\nu_x(\xi)\Bigr)^{\frac{1}{p}} \geq \|\bar f\|_{L^\infty(\Omega)}-\varepsilon
\end{align}
for all $x\in S'$ and $p>1$ sufficiently large. 
Indeed, with 
$$S_j := \Bigl\{x \in S: 
	\bigl(\textstyle \int_{\Omega}f(\xi)^p d\nu_x(\xi)\bigr)^{\frac{1}{p}}
	\geq  \| \bar f\|_{L^\infty(\Omega)} - \varepsilon \text{ for all $p \geq j$}\Bigr\}$$
for $j \in \mathbb N$, one has that $S= \bigcup_{j=1}^\infty S_j$.
Since $\mathcal L^N(S) > 0$, there must be at least one $j'$ for which $\mathcal L^N(S_{j'}) > 0$, and setting $S' := S_{j'}$
shows~\eqref{est1}. 

We take the inequality in~\eqref{est1} to the $p$th power and integrate over $S'$. Along with Theorem \ref{FTYM}\,(ii),  
it follows that
\begin{align*}
\mathcal L^n(S') (\|\bar{f}\|_{L^\infty(\Omega)}-\varepsilon)^p & \leq \int_{S'} \int_{\R^m} |f(\xi)|^p\, d\nu_x(\xi) \, dx 
\\ &\leq \liminf_{j\to \infty} \int_{\Omega} |f(u_j)|^p \, dx \leq \liminf_{j\to \infty} \|f(u_j)\|_{L^\infty(\Omega)}^p \mathcal L^n (\Omega).
\end{align*}
Hence, 
\begin{align*}
\liminf_{j\to \infty} \|f(u_j)\|_{L^\infty(\Omega)} \geq  \Bigl(\frac{\mathcal L^n(S')}{\mathcal L^n (\Omega)}\Bigr)^{\frac{1}{p}}\bigl(\|\bar{f}\|_{L^\infty(\Omega)}-\varepsilon\bigr)
\end{align*}
for $p>1$ sufficiently large. Letting $p\to \infty$ and recalling that $\varepsilon>0$ is arbitrary concludes the proof.
\end{proof}

\subsection{Double-integral functionals and separate convexity}\label{subsec:nonlocal}

This subsection presents some preliminaries on nonlocal integral functionals, see also~\cite{Ped16} for a recent overview article. 
For $p >1$, consider a double-integral functional $I: L^p(\Omega;\R^m)\to \R$,
\begin{align}\label{doubleintegral_Pedregal}
I(u):=  \int_{\Omega}\int_{\Omega} W(u(x), u(y)) \,dx\, dy,
\end{align}
where $W:\R^m\times \R^m\to \R$ is a continuous function that is bounded from below and has standard $p$-growth.  

In 1997, Pedregal~\cite{Ped97} gave the first necessary and sufficient condition for $L^p$-weak lower semicontinuity of $I$ in the scalar case $m=1$.  This condition was quite implicit, but could be shown to be equivalent to the separate convexity of the integrand $W$ a decade later by Bevan \& Pedregal \cite{BeP06}. Also in the vectorial case, $W$ being separately convex is the characterizing property to ensure weak lower semicontinuity of $I$, as Mu\~{n}oz~proved in~\cite{Mun09}; the latter is formulated in the gradient setting, using $W^{1,p}$-weak convergence of scalar valued functions, but the statement and the ideas of the proof carry over to functionals of the form~\eqref{doubleintegral_Pedregal}, cf.~\cite{Ped16}.
Results about inhomogeneous double-integral functionals, meaning~with integrands $W$ depending also explicitly on $x, y\in \Omega$, can be found e.g.~in \cite{BMC18, Mun09, Ped16}.

\begin{Definition}
	\label{separateconvexityfunctions}
We call a function $W:\mathbb R^m \times \mathbb R^m \to \R_\infty$ separately convex (with vectorial components) if for every $\xi \in \mathbb R^m$, the functions $W(\cdot, \xi)$ and $W(\xi,\cdot)$ are convex.
\end{Definition}

Besides our terminology, which is inspired by~\cite{Dac08}, other names for separate convexity are common in the literature, such as  orthogonal convexity, directional convexity
or bi-convexity; see~\cite{AuH86}, for the first detailed treatment of the subject. 

As discussed recently in~\cite{BMC18}, there are different `nonlocal' definitions of convexity related to the weak lower semicontinuity of $I$, which coincide under suitable assumptions. In Section~\ref{sec:indicator}, we extend the discussion of these notions to the context of unbounded functionals.

It was observed in \cite[p.~1383]{Ped97} that for $W:\mathbb R\times \mathbb R \to \R$ continuous and bounded from below, separate convexity of $W$ can equivalently be characterized by a separate Jensen's inequality. 
In view of~\cite[Theorem 4.1.4]{CDA02}, this statement can easily be generalized to extended-valued, lower semicontinuous functions defined on $\R^m\times \R^m$ as follows. 

\begin{Lemma}
	\label{lem:sepJensen}
Let $W:\mathbb R^m \times \mathbb R^m \to \R_\infty$ be lower semicontinuous and bounded from below, then
$W$ is separately convex if and only if 
\begin{align}
\label{sepJensen}
\int_{\mathbb R^m}\int_{\mathbb R^m} W(\xi,\zeta)  \,d \nu(\xi) \,d  \mu(\zeta) \geq 
W([\nu], [\mu])
\end{align}
for any  $\mu, \nu \in \mathcal Pr(\R^m)$. 
\end{Lemma}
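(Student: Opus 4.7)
The equivalence naturally splits into two directions, and the bulk of the work is in the forward implication, which amounts to iterating the classical scalar Jensen inequality.

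For the forward direction, I would assume $W$ is separately convex and fix arbitrary $\mu, \nu \in \mathcal{Pr}(\R^m)$. For each fixed $\zeta \in \R^m$, the slice $W(\cdot, \zeta): \R^m \to \R_\infty$ is convex, lower semicontinuous, and bounded from below, so the classical (extended-valued) Jensen inequality gives
\begin{equation*}
\int_{\R^m} W(\xi, \zeta)\, d\nu(\xi) \geq W([\nu], \zeta).
\end{equation*}
Call the left-hand side $g(\zeta)$. I would next check that $g$ is Borel measurable and bounded from below: joint lower semicontinuity and boundedness below of $W$ let me apply Tonelli to the nonnegative function $W + C$ (where $-C$ is a lower bound), yielding measurability of $g$. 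Then, since $W([\nu], \cdot)$ is convex, lower semicontinuous, and bounded from below by hypothesis, a second application of Jensen's inequality gives
\begin{equation*}
\int_{\R^m} g(\zeta)\, d\mu(\zeta) \geq \int_{\R^m} W([\nu], \zeta)\, d\mu(\zeta) \geq W([\nu], [\mu]),
\end{equation*}
and applying Tonelli once more to rewrite the double integral in the claimed order yields \eqref{sepJensen}.

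For the reverse direction, I would derive separate convexity by specializing \eqref{sepJensen} to Dirac combinations. Fix $\xi_0, \xi_1, \zeta \in \R^m$ and $t \in [0,1]$, and take $\nu := t \delta_{\xi_0} + (1-t)\delta_{\xi_1}$ and $\mu := \delta_\zeta$, so that $[\nu] = t\xi_0 + (1-t)\xi_1$ and $[\mu] = \zeta$. The inequality reduces to
\begin{equation*}
t\, W(\xi_0, \zeta) + (1-t)\, W(\xi_1, \zeta) \geq W(t\xi_0 + (1-t)\xi_1, \zeta),
\end{equation*}
showing convexity of $W(\cdot, \zeta)$; a symmetric choice of $\mu$ and $\nu$ yields convexity of $W(\zeta, \cdot)$.

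The only point requiring a touch of care is justifying Tonelli and the measurability of the partial integral $g(\zeta) = \int W(\xi, \zeta)\, d\nu(\xi)$ in the presence of an extended-valued integrand; the boundedness-below assumption is tailor-made for this and, together with joint lower semicontinuity (hence Borel measurability) of $W$, makes the manipulation rigorous. I do not anticipate any genuine obstacle beyond this bookkeeping, since the real content is a direct iteration of classical Jensen and the converse is a one-line test with Dirac masses.
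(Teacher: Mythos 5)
Your proof is correct and follows essentially the same route as the paper: apply the extended-valued Jensen inequality twice (first in the first variable for each fixed $\zeta$, then to the convex slice $W([\nu],\cdot)$), and obtain the converse by testing \eqref{sepJensen} with convex combinations of Dirac masses. The extra attention you pay to measurability of the partial integral via Tonelli is a sound (if implicit in the paper's citation of its Jensen reference) piece of bookkeeping.
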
 
\begin{proof}
	[Proof] Assuming first that $W$ is separately convex, 
to obtain~\eqref{sepJensen}, it suffices now to apply Jensen's inequality in the version of~\cite[Theorem 4.1.4]{CDA02} twice; first with the integrand $W(\cdot, \xi)$ for $\mu$-a.e.~$\xi\in \R^m$, and then with $W([\nu], \cdot)$.
 
The fact that \eqref{sepJensen} yields separate convexity of $W$ follows after choosing $\mu$ and $\nu$ to be convex combinations of Dirac measures. 
\end{proof}
The question of relaxation of functionals $I$ as in~\eqref{doubleintegral_Pedregal} for which the density $W$ fails to be separately convex is still mostly open. 
It may seem counter-intuitive, but there are examples~\cite{BMC18, BeP06, Ped16} indicating that separate convexification of $W$ 
does in general not give rise to the right candidate for the weakly lower semicontinuous envelope of $I$. Even more remarkably, as recently proven in \cite{KrZ19, MCT20}, relaxation in the weak $L^p$-topology of double-integrals functionals cannot always be expected to be structure-preserving. In the context of Young measures, we refer to~\cite{BMC18} for a relaxation result with respect to the narrow convergence.

\section{Separate level convexity}\label{3}
In this section, we introduce the notion of separate level convexity, 
and show that it provides a sufficient condition for the $L^\infty$-weak$^\ast$ lower semicontinuity of nonlocal supremal functionals as in~\eqref{ourfunct}. 

Before doing so, let us specify what we mean by separate convexity with vectorial components (in the sequel, just referred to as {separate convexity}) of subsets of $\R^m\times \R^m$. 

For $m=1$, this definition reduces to classical separate convexity in the sense of~\cite[Proposition 7.5 and Definition 7.13]{Dac08}. 

\begin{Definition}[Separate convexity (with vectorial components) of sets]
	\label{separateconvexitysets}
A set $E\subset \R^m\times \R^m$ is called separately convex, if for every $t\in (0,1)$ and every $(\xi_1, \zeta_1), (\xi_2, \zeta_2)\in E$ such that $\xi_1=\xi_2$ or $\zeta_1=\zeta_2$ it holds that
	\begin{align*}
	t(\xi_1, \zeta_1) + (1-t)(\xi_2, \zeta_2) \in E.
	\end{align*}

The separately convex hull of $E$, denoted by $E^{\rm sc}$, is defined as the smallest separately convex set in $\mathbb R^m\times \R^m$ containing $E$.
\end{Definition}

The separately convex hull of $E\subset\R^m\times \R^m$ can be characterized by 
\begin{align}\label{separatelyconvex_hull}
E^{\rm sc} = \bigcup_{i\in \mathbb{N}}E^{\rm sc}_i
\end{align}
with $E_0^{\rm sc} = E$ and for $i\in \mathbb N$,
\begin{align}\label{construction_sc}
E_i^{\rm sc} &= \{(\xi, \zeta)\in \R^{m}\times \R^m: (\xi, \zeta)=t (\xi_1, \zeta_1)+(1-t) (\xi_2, \zeta_2), t\in [0,1], \\ & \qquad\qquad \qquad \hspace{0.5cm}
(\xi_1, \zeta_1), (\xi_2, \zeta_2)\in E_{i-1}^{\rm sc}, \xi_1=\xi_2 \text{ or } \zeta_1=\zeta_2 \},\nonumber
\end{align}
cf.~\cite[Theorem~7.17]{Dac08}. 

\begin{Remark}\label{rem:Esc_compact}
It is clear by the construction in~\eqref{separatelyconvex_hull} and~\eqref{construction_sc} that if $E$ is open, then so is $E^{\rm sc}$. While compactness of $E$ is preserved under separate convexifications in the two-dimensional setting (i.e.~if $m=1$) as stated in~\cite[Proposition~2.3]{Kol03}, this is in general not true for $m>1$ \cite[Remark~7.18 (ii)]{Dac08}; more details on the latter are given in~\cite{AuH86, Kol03}.  
\end{Remark}

\begin{Definition}[Separate level convexity (with vectorial components) of functions]
	\label{seplevconv} 
	We call a function $W: \mathbb R^m\times \mathbb R^m \to \R_\infty$ {separately level convex} if 
	all level sets of $W$, i.e.~the sets $L_c(W) = \{(\xi,\eta)\in\mathbb R^m\times \mathbb R^m: W(\xi,\eta)\leq c\}$ with $c\in\R$, are separately convex. 
	
	Furthermore, $W^{\rm slc}$ stands for the separately level convex envelope of $W$, that is, the largest separately level convex function below $W$.	
\end{Definition}

\begin{Remark}
	\label{prop:levelsetlsc}
	a) An equivalent way of expressing separate level convexity of $W:\mathbb R^m \times \mathbb R^m\to \R_\infty$ is that for every $\xi, \zeta\in \R^m$, the functions $W(\xi, \cdot), W(\cdot, \zeta):\R^m\to \R_\infty$ are level convex.

b) In view of the above definitions, we observe that 
\begin{align}\label{inclusion_levelconvexenvelope}
L_c(W^{\rm slc}) \supset L_c(W)^{\rm sc} \hbox{ for any }c \in \mathbb R.
\end{align} 
In general, equality in~\eqref{inclusion_levelconvexenvelope} is not true as the example 
\begin{align*}
\mathbb R\times \mathbb R \ni(\xi, \zeta)\mapsto W(\xi,\zeta)=\left\{\begin{array}{ll}|(\xi, \zeta)| &\hbox{ if } (\xi,\zeta)\neq (0,0), \\
1 &\hbox{ if }(\xi,\zeta)=(0,0),
\end{array}\right.
\end{align*}
shows. Here, $L_0(W^{\rm slc}) = \{0\}$, whereas $L_0(W)^{\rm sc} = \emptyset$. Under additional assumptions, equality in~\eqref{inclusion_levelconvexenvelope} is nevertheless true, cf.~\eqref{slcenvelope=}. 
\end{Remark}

The following lemma collects a number of different representations of separate level convexity. \begin{Lemma}\label{lem:separate_level_convexity}
Let $W:\R^m\times \R^m\to \R_\infty$ be Borel measurable. Then the following statements are equivalent: 
\begin{itemize}
\item[$(i)$] $W$ is separately level convex;
\item [$(ii)$] for every $\xi_1, \xi_2, \zeta_1, \zeta_2 \in  \mathbb R^m$ and $t, s \in [0, 1]$ one has that
	\begin{align*}
	W(t\xi_1 + (1-t)\xi_2, s\zeta_1+(1-s) \zeta_2) &\leq \max_{i,j\in \{1,2\}} W(\xi_i,\zeta_j);
	\end{align*}
	\item[$(iii)$] for any open $\Omega\subset \R^n$ with $\mathcal L^n(\Omega)<\infty$ and all $\varphi, \psi\in L^1(\Omega;\mathbb R^m)$, 
	$$ W\Bigl(\frac{1}{\mathcal L^n(\Omega)}\int_{\Omega}\varphi \,dx, \frac{1}{\mathcal L^n(\Omega)}\int_{\Omega}\psi \,dy \Bigr) \leq  \supess_{(x,y) \in \Omega\times \Omega}W(\varphi(x),\psi(y));$$ 
	 \item[$(iv)$]  for every $\nu, \mu\in \mathcal Pr(\R^m)$ it holds that
\begin{align*}
W([\nu], [\mu])  & \leq  (\nu\otimes\mu)\text{-}\supess_{(\xi, \zeta)\in \R^m\times \R^m} W(\xi, \zeta) \\ &= \nu\text{-}\supess_{\xi \in \R^m} \bigl(\mu\text{-}\supess_{\zeta \in \R^m} W(\xi, \zeta)\bigr) = \mu\text{-}\supess_{\zeta \in \R^m} \bigl(\nu\text{-}\supess_{\xi \in \R^m} W(\xi, \zeta)\bigr).
\end{align*}
\end{itemize}
\end{Lemma}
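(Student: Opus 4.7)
The plan is to reduce all four conditions to the single-variable characterizations of level convexity in Lemma~\ref{Theorem24RZ}, applied slot by slot. The starting observation, recorded in Remark~\ref{prop:levelsetlsc}(a), is that separate level convexity of $W$ is the same as level convexity of $W(\cdot,\zeta)$ and $W(\xi,\cdot)$ in the single-variable sense for every fixed $\zeta,\xi\in\R^m$. This will let me treat the two arguments of $W$ independently, applying the corresponding item of Lemma~\ref{Theorem24RZ} twice in succession.

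First, I would show (i) $\Leftrightarrow$ (ii) directly from Definition~\ref{separateconvexitysets}. Setting $c:=\max_{i,j\in\{1,2\}}W(\xi_i,\zeta_j)$, the four points $(\xi_i,\zeta_j)$ all lie in $L_c(W)$; separate convexity of the level set applied along the second coordinate (which is shared by $(\xi_1,\zeta_j)$ and $(\xi_2,\zeta_j)$) yields $(t\xi_1+(1-t)\xi_2,\zeta_j)\in L_c(W)$ for $j=1,2$, and a second application along the first coordinate produces the combination in (ii). The reverse implication follows by applying (ii) to two points of $L_c(W)$ that share one coordinate. Next, to derive (iii) from (ii), I would apply Lemma~\ref{Theorem24RZ}(iii) in the first slot with test function $\varphi$ and fixed second argument $\psi(y)$, obtaining
\[
W\Bigl(\tfrac{1}{\mathcal L^n(\Omega)}\textstyle\int_{\Omega}\varphi\,dx,\psi(y)\Bigr) \leq \supess_{x\in\Omega}W(\varphi(x),\psi(y))
\]
for a.e.~$y\in\Omega$; then apply Lemma~\ref{Theorem24RZ}(iii) in the second slot to the level convex function $W(\bar\varphi,\cdot)$, where $\bar\varphi$ denotes the mean of $\varphi$, and combine the two estimates. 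The reverse direction (iii) $\Rightarrow$ (ii) is obtained by testing with the piecewise constant functions $\varphi=\xi_1\mathbbm 1_A + \xi_2\mathbbm 1_{\Omega\setminus A}$ and $\psi=\zeta_1\mathbbm 1_B + \zeta_2\mathbbm 1_{\Omega\setminus B}$ for measurable $A,B\subset\Omega$ with $\mathcal L^n(A)/\mathcal L^n(\Omega)=t$ and $\mathcal L^n(B)/\mathcal L^n(\Omega)=s$. The equivalence (ii) $\Leftrightarrow$ (iv) is completely analogous, with Lemma~\ref{Theorem24RZ}(iv) replacing (iii) and convex combinations of Dirac measures replacing piecewise constant functions.

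The only non-routine point is the identity linking the product essential supremum with the two iterated ones in (iv). It comes from a Fubini-style argument for essential suprema, based on the observation that a set $N\subset\R^m\times\R^m$ is $(\nu\otimes\mu)$-null if and only if $\mu(\mathfrak N^\xi)=0$ for $\nu$-a.e.~$\xi$, and symmetrically in the other coordinate. The measurability of $\xi\mapsto\mu\text{-}\supess_\zeta W(\xi,\zeta)$ needed to make sense of the iterated expressions can be secured by writing the inner essential supremum as a countable supremum over $W(\xi,\cdot)$ evaluated on a countable set that is dense in ${\rm supp\,}\mu$ up to a $\mu$-null set, after possibly modifying $W$ on a set of $(\nu\otimes\mu)$-measure zero. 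Aside from this bookkeeping, all steps consist of routine iterated applications of Lemma~\ref{Theorem24RZ}, and I do not anticipate any further obstacle.
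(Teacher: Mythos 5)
Your proposal is correct and follows essentially the same route as the paper, which simply applies the single-variable characterizations of Lemma~\ref{Theorem24RZ} twice, once in each argument of $W$ while freezing the other. The extra care you take with the Fubini-type identity for iterated essential suprema and the measurability of $\xi\mapsto\mu\text{-}\supess_\zeta W(\xi,\zeta)$ is sound bookkeeping that the paper leaves implicit.
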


\begin{proof}
These equivalences follow as an immediate corollary of~Lemma~\ref{Theorem24RZ}. Indeed, we apply the characterizations therein twice in each of the two variables of $W$, fixing the other. 
\end{proof}

The sufficiency of separate level convexity of $W$ for ensuring $L^\infty$-weak$^\ast$ lower semicontinuity of $J$ in \eqref{ourfunct} follows in light of the coercivity assumption of $W$ and
 Remark~\ref{rem1.2}\;a) from the next proposition. The proof relies on combining elements from both theories of supremal and double-integral functionals, cf.~Sections~\ref{subsec:YM} and~\ref{subsec:supremal}, respectively. 

\begin{Proposition}
	\label{suffseplevconv}
	Let $J$ be as in \eqref{ourfunct} with $W:\mathbb R^m \times \mathbb R^m \to \mathbb R$ lower semicontinuous and coercive, i.e., $W(\xi, \zeta)\to \infty$ as $|(\xi,\zeta)|\to \infty$. If $W$ is separately level convex,
	then $J$ is 
	$L^\infty$-weakly$^\ast$ lower semicontinuous, i.e.,~for all $(u_j)_j\subset L^\infty(\Omega;\R^m)$ and $u\in L^\infty(\Omega;\R^m)$ such that $u_j\weaklystar u$ in $L^\infty(\Omega;\R^m)$ it holds that 
	\begin{equation*}
	\liminf_{j\to \infty} \supess_{(x,y)\in \Omega\times \Omega}W(u_j(x),u_j(y)) \geq \supess_{(x, y )\in \Omega \times \Omega}W(u(x),u(y)).
	\end{equation*}
\end{Proposition}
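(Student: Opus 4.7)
The plan is to combine Young measure arguments with the characterization of separate level convexity in Lemma \ref{lem:separate_level_convexity}\,(iv). Take a sequence $(u_j)_j\subset L^\infty(\Omega;\R^m)$ with $u_j\weaklystar u$, and let $\ell:=\liminf_{j\to\infty} J(u_j)$. Up to extracting a (non-relabeled) subsequence, we may assume the $\liminf$ is realized as a limit and that $(u_j)_j$ is uniformly bounded in $L^\infty(\Omega;\R^m)$; by the fundamental theorem for Young measures (Theorem~\ref{FTYM}), we may further assume that $(u_j)_j$ generates a Young measure $\nu=\{\nu_x\}_{x\in\Omega}$ with ${\rm supp\,}\nu_x$ contained in a fixed compact set for a.e.~$x\in\Omega$. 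Since $u_j\weaklystar u$ in $L^\infty(\Omega;\R^m)$, the barycenter identity yields $[\nu_x]=u(x)$ for a.e.~$x\in\Omega$.

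Next, apply Lemma~\ref{asprop2.3Pedregal}: the sequence $(v_{u_j})_j\subset L^\infty(\Omega\times\Omega;\R^m\times\R^m)$ defined by $v_{u_j}(x,y)=(u_j(x),u_j(y))$ generates the product Young measure $\Lambda_{(x,y)}=\nu_x\otimes\nu_y$ on $\Omega\times\Omega$. Note that $W$ is bounded from below (by lower semicontinuity combined with coercivity, which forces $W$ to attain its infimum on every sublevel set), so Lemma~\ref{thm34Barronmon}, applied on the domain $\Omega\times\Omega$ to the lower semicontinuous integrand $W$ and the sequence $(v_{u_j})_j$, gives
\begin{equation*}
\ell=\liminf_{j\to\infty}\supess_{(x,y)\in\Omega\times\Omega} W(v_{u_j}(x,y))\,\geq\, \supess_{(x,y)\in\Omega\times\Omega}\bar{W}(x,y),
\end{equation*}
where $\bar{W}(x,y):=(\nu_x\otimes\nu_y)\text{-}\supess_{(\xi,\zeta)\in\R^m\times\R^m} W(\xi,\zeta)$ for a.e.~$(x,y)\in\Omega\times\Omega$.

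Now invoke the separate Jensen-type inequality of Lemma~\ref{lem:separate_level_convexity}\,(iv) pointwise: for a.e.~$(x,y)\in\Omega\times\Omega$,
\begin{equation*}
\bar{W}(x,y) \,\geq\, W\bigl([\nu_x],[\nu_y]\bigr) \,=\, W\bigl(u(x),u(y)\bigr).
\end{equation*}
Taking the essential supremum over $(x,y)\in\Omega\times\Omega$ and chaining with the previous estimate yields $\ell\geq J(u)$, which is the desired lower semicontinuity inequality.

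The only delicate point is justifying the bound from below on $W$ that is needed to apply Lemma~\ref{thm34Barronmon}; this is where coercivity enters decisively, since a merely lower semicontinuous $W:\R^m\times\R^m\to\R$ need not be bounded below, but coercivity forces $W$ to attain its infimum on $\R^m\times\R^m$. Everything else is a direct assembly of the preliminary results, with the crucial structural input being Lemma~\ref{asprop2.3Pedregal} (to pass from the Young measure of $(u_j)_j$ to that of $(v_{u_j})_j$) and Lemma~\ref{lem:separate_level_convexity}\,(iv) (to exploit separate level convexity as a Jensen-type supremal inequality against product measures).
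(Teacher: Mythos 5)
Your proof is correct and follows essentially the same route as the paper's: extract a Young-measure-generating subsequence, identify the Young measure of the nonlocal vector fields $(v_{u_j})_j$ as the product $\nu_x\otimes\nu_y$ via Lemma~\ref{asprop2.3Pedregal}, apply Lemma~\ref{thm34Barronmon} to bound the $\liminf$ from below by $\supess \overline W$, and conclude with the separate Jensen-type inequality of Lemma~\ref{lem:separate_level_convexity}\,(iv) together with the barycenter identity $[\nu_x]=u(x)$. Your explicit remark that lower semicontinuity plus coercivity guarantees $W$ is bounded from below (as required by Lemma~\ref{thm34Barronmon}) is a valid and slightly more careful touch than the paper, which leaves this implicit.
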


\begin{proof}[Proof]
	Let $(u_j)_j\subset L^\infty(\Omega;\R^m)$ be such that $u_j\weaklystar u$ in $L^\infty(\Omega;\R^m)$ and let $\nu=\{\nu_x\}_{x\in \Omega}$ be the Young measure generated by $(u_j)_j$ (possibly after passing to a non-relabeled subsequence). In particular, 
	\begin{equation}
	\label{eq1}[{\nu_x}]=\langle \nu_x,{\rm id} \rangle = u(x)\qquad \text{for a.e.~$x\in \Omega$.}
	\end{equation} 
	Let $(v_{u_j})_j\subset L^\infty(\Omega\times \Omega;\R^m\times \R^m)$ be the sequence of nonlocal vector fields associated with $(u_j)_j$, cf.~\eqref{vu}, and $\Lambda=\{\Lambda\}_{(x,y)\in \Omega\times \Omega}=\nu_x\otimes \nu_y$ for $x,y\in \Omega$ the generated Young measure according to  Lemma~\ref{asprop2.3Pedregal}. 
	Then, Lemma \ref{thm34Barronmon} implies that
	\begin{equation}
	\label{one}
	\liminf_{j\to \infty} J(u_j) = \liminf_{j\to \infty} \supess_{(x,y)\in \Omega\times \Omega}W(u_j(x),u_j(y))\geq \supess_{(x,y)\in \Omega\times \Omega} \overline W(x,y),
	\end{equation}
	where $\overline W(x,y) := \Lambda_{(x,y)}\text{-}\supess_{(\xi,\zeta)\in \mathbb R^m \times \mathbb R^m} W(\xi, \zeta)$. By Lemma \ref{asprop2.3Pedregal}, 
	\begin{equation}
	\nonumber
	\overline W(x,y)=  \nu_x\otimes \nu_y\text{-}\supess_{(\xi,\zeta)\in \mathbb R^m \times \mathbb R^m} W(\xi, \zeta)=
	\nu_x\text{-}\supess_{\xi\in \mathbb R^m}\bigl(\nu_y\text{-} \supess_{\zeta \in \mathbb R^m} W(\xi, \zeta) \bigr)
	\end{equation}
	for a.e.~$(x, y)\in \Omega\times \Omega$, and since $W$ is separately convex, Lemma \ref{lem:separate_level_convexity}\,$(iv)$ along with~\eqref{eq1} guarantees that
	\begin{equation}
	\label{two}
	\overline W(x,y)
	\geq W([\nu_x],[\nu_y]) = W(u(x), u(y)).
	\end{equation}
	
Joining \eqref{two} and \eqref{one} concludes the proof.
\end{proof}

As we show later in Section~\ref{subsec:lsc_relaxation}, separate level convexity of $W$ is not necessary for $J$ being sequentially $L^\infty$-weakly$^\ast$ lower semicontinuous, cf.~Corollary~\ref{cor}. 

\section{Diagonalization, symmetrization and separately convex hulls}\label{sec:diagonal}

For $E \subset \mathbb R^m \times \mathbb R^m$, let
\begin{align*}
E^{\rm diag} :=\{(\alpha, \beta)\in E: (\alpha,\alpha), (\beta,\beta)\in E\}
\end{align*} 
and
\begin{align*}
E^{\rm sym} :=\{(\alpha, \beta)\in E: (\beta, \alpha)\in E\}=E\cap E^T
\end{align*} 
be the diagonalization and symmetrization of $E$. Accordingly, we call $E$ symmetric, if $E =E^{\rm sym}$, and diagonal if $E=E^{\rm diag}$. 
By combining these two operations, we introduce
\begin{align}\label{hatK}
\widehat{E} & := E^{\rm sym}\cap E^{\rm diag} \\ & \nonumber = (E^{\rm diag})^{\rm sym} =  (E^{\rm sym})^{\rm diag}  = \{(\alpha, \beta)\in E: (\alpha, \alpha), (\beta, \alpha), (\beta, \beta)\in E\}.
\end{align}

As an immediate consequence of these definitions, one observes that if $E$ is closed (compact), then $E^{\rm sym}$ and $E^{\rm diag}$, and consequently, also $\widehat{E}$, are closed (compact).

This section is devoted to the study of characterizing properties of diagonal and symmetric sets. 
For illustration, we start with 
a few simple examples in the scalar case $m=1$.

\begin{Example}\label{exampleK} 
Consider the four compact subsets of $\R\times \R$,
\begin{align*}
\begin{array}{l}
\hspace{0.6cm}K_1= [-2,2]\times [-1,1], \quad K_2=\{(\xi,\zeta)\in \R\times \R: \xi^2+\zeta^2\leq 2\},\quad \\[0.2cm]
K_3  =\{(\xi, \zeta)\in \R\times \R: |\xi|+|\zeta|\leq 2\},\quad \text{and}\quad K_4 = [-1, 1]\times [-1,1]. 
\end{array} 
\end{align*}
Then, $\widehat{K_1} =\widehat{K_2}=\widehat{K_3}=\widehat{K_4}=K_4$. For the points sets 
\begin{align}\label{K4K5}
K_5=\{(1,0), (0,1), (-1, 0), (0, -1)\}\quad\text{and}\quad  K_6=\{-1,1\}\times \{-1,1\},
\end{align}
one obtains that $\widehat{K_5} =\emptyset$ and $\widehat{K_6} = K_6$, respectively. 
\end{Example}

Notice the following equivalent way of expressing $\widehat E$ in \eqref{hatK},
\begin{align}\label{tildeK_construction}
\widehat E = E^{\rm sym} \setminus B_E \qquad
 \text{with } \ B_E:= \bigcup_{(\xi,\xi)\notin E} (\R^m\times \{\xi\}) \cup (\{\xi\}\times \R^m).
\end{align}

Based on the concept of maximal Cartesian subsets and motivated by the observation that $\widehat E=\bigcup_{(\xi, \zeta)\in \widehat E} \{\xi, \zeta\}\times \{\zeta, \xi\}\subset \bigcup_{(\xi, \zeta)\in E} \{\xi, \zeta\}\times \{\zeta, \xi\}$, we will derive yet another representation of $\widehat E$ in Lemma~\ref{lem:Khat_alternative}. 
\begin{Definition}\label{def:Cartesian_subset}
Let  $E\subset \R^m\times \R^m$. We call a set $P\subset E$ a maximal Cartesian subset of $E$ if $P=A\times A$ with $A\subset \R^m$ and if for any $B\subset \R^m$ with $A\subset B$ and $B\times B\subset E$ it holds that $B=A$. We denote the set of all maximal Cartesian subsets of $E$ by $\mathcal P_E$.
\end{Definition}

\begin{Lemma}\label{lem:Khat_alternative}
Let $E\subset \R^m\times \R^m$. Then, 
\begin{align*}
\widehat E = \bigcup_{P\in \mathcal P_E} P.
\end{align*}
\end{Lemma}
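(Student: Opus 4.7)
\smallskip

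\noindent\textbf{Proof plan.} The claim naturally splits into the two inclusions, and I would treat each separately.

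For the inclusion $\bigcup_{P\in \mathcal P_E} P\subset \widehat E$, fix $(\alpha,\beta)\in P$ for some maximal Cartesian subset $P=A\times A$ of $E$. Since $\alpha,\beta\in A$, all four points $(\alpha,\alpha)$, $(\alpha,\beta)$, $(\beta,\alpha)$, $(\beta,\beta)$ belong to $A\times A\subset E$; by the characterization of $\widehat E$ in~\eqref{hatK}, this shows $(\alpha,\beta)\in\widehat E$. Notice that here, maximality of $P$ plays no role, only the fact that $P$ is a Cartesian product contained in $E$.

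For the converse $\widehat E\subset \bigcup_{P\in \mathcal P_E} P$, the starting observation is that, given $(\alpha,\beta)\in \widehat E$, the two-element set $\{\alpha,\beta\}\subset \R^m$ already satisfies $\{\alpha,\beta\}\times\{\alpha,\beta\}\subset E$; this is exactly what \eqref{hatK} guarantees. To promote this to a \emph{maximal} Cartesian subset of $E$ containing $(\alpha,\beta)$, I would apply Zorn's lemma to the non-empty family
\begin{align*}
\mathcal C :=\{B\subset \R^m : \{\alpha,\beta\}\subset B \text{ and } B\times B\subset E\},
\end{align*}
ordered by inclusion. Any chain $(B_i)_i\subset \mathcal C$ admits $B_\infty:=\bigcup_i B_i$ as an upper bound in $\mathcal C$: for $x,y\in B_\infty$, the chain property provides a single $B_j$ containing both, so $(x,y)\in B_j\times B_j\subset E$, whence $B_\infty\times B_\infty\subset E$. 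A maximal element $A\in \mathcal C$ then yields $P:=A\times A\in \mathcal P_E$ with $(\alpha,\beta)\in P$.

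The argument is almost entirely bookkeeping; no real obstacle should arise. The one conceptual point worth emphasizing is that the definition of $\widehat E$ is precisely engineered so that for each $(\alpha,\beta)\in \widehat E$ the four required products $(\alpha,\alpha), (\alpha,\beta), (\beta,\alpha), (\beta,\beta)$ lie in $E$, i.e.~$\{\alpha,\beta\}\times\{\alpha,\beta\}\subset E$; this is exactly the seed needed to launch the Zorn's lemma extension to a maximal Cartesian subset.
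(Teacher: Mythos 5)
Your proof is correct and follows essentially the same route as the paper: the first inclusion is verified pointwise from the definition of $\widehat E$, and the second uses that $\{\alpha,\beta\}\times\{\alpha,\beta\}\subset E$ and then passes to a maximal Cartesian subset containing it. The only difference is that you make explicit, via Zorn's lemma, the existence of such a maximal Cartesian subset, a step the paper simply asserts; this is a welcome (and correct) bit of added rigor rather than a different approach.
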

\begin{proof}
The proof follows simply from exploiting the definitions of $\mathcal P_E$ and $\widehat E$. Here are some more details for the readers' convenience. If $(\xi, \zeta)\in P$ for some $P\in \mathcal P_E$,  then $\{\xi, \zeta\}\times \{\xi, \zeta\}\subset P \subset E$. Hence, $(\xi, \zeta)$, $(\xi, \xi)$, $(\zeta, \xi), (\zeta, \zeta)\in E$, which shows that $(\xi, \zeta)\in \widehat E$. 

On the other hand, we know for $(\xi, \zeta)\in \widehat E$ that $\{\xi, \zeta\}\times \{\xi, \zeta\}\subset \widehat E\subset E$, and hence $B\times B\subset E$ with $B=\{\xi, \zeta\}$. Due to the Cartesian structure of $B\times B$, there is a maximal Cartesian subset of $E$ containing $B\times B$, which proves the statement. 
\end{proof}

\begin{Remark}
It is immediate to see that $\mathcal P_E=\mathcal P_{\widehat E}$.
\end{Remark}

Recalling Definition \ref{separateconvexitysets}, we prove that diagonalization and symmetrization preserves separate convexity if $m=1$. For $m>1$, however, this is in general not true, see Remark~\ref{rem:hat1}\,b).  
\begin{Lemma}
	\label{Khatsepconv}
	If $E\subset \R\times \R$ is separately convex, then $\widehat E$ is also separately convex. 
\end{Lemma}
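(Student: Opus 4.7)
My plan is to reduce the claim to verifying the four point-conditions defining $\widehat E$. By the manifest symmetry of $\widehat E$ under swapping coordinates, I may assume without loss of generality that the two points in $\widehat E$ whose convex combination is to be checked share their first coordinate, so they take the form $(\xi,\zeta_1)$ and $(\xi,\zeta_2)$; writing $\zeta:=t\zeta_1+(1-t)\zeta_2$ with $t\in(0,1)$, the task is to show that each of $(\xi,\zeta)$, $(\xi,\xi)$, $(\zeta,\xi)$, $(\zeta,\zeta)$ lies in $E$.

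Three of these containments are essentially immediate. The point $(\xi,\xi)$ is in $E$ because $(\xi,\zeta_1)\in\widehat E\subset E^{\rm diag}$; the point $(\xi,\zeta)$ is in $E$ by applying separate convexity of $E$ to the pair $(\xi,\zeta_1),(\xi,\zeta_2)\in E$; and $(\zeta,\xi)\in E$ follows because the symmetry condition built into $\widehat E$ gives $(\zeta_1,\xi),(\zeta_2,\xi)\in E$, to which separate convexity applies along their common second coordinate.

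The only nontrivial step is $(\zeta,\zeta)\in E$, and this is where the scalar hypothesis $m=1$ enters decisively. The key auxiliary observation I would establish first is a \emph{square-filling lemma}: whenever the four corners $(a,a),(a,b),(b,a),(b,b)$ of an axis-aligned square lie in a separately convex set $E\subset\R\times\R$, the entire solid square $[\min(a,b),\max(a,b)]^2$ lies in $E$ as well. Indeed, separate convexity first fills in the two vertical edges, and then each horizontal slice at height $y\in[\min(a,b),\max(a,b)]$ is filled in by a further application of separate convexity along the constant second coordinate $y$. If $\zeta=\xi$ there is nothing to prove; otherwise, assuming WLOG $\zeta>\xi$, I would pick an index $i\in\{1,2\}$ with $\zeta_i\geq\zeta>\xi$ (possible since $\zeta$ lies between $\zeta_1$ and $\zeta_2$). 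From $(\xi,\zeta_i)\in\widehat E$ one reads off directly that the four corners $(\xi,\xi)$, $(\xi,\zeta_i)$, $(\zeta_i,\xi)$, $(\zeta_i,\zeta_i)$ all lie in $E$, so the square-filling lemma gives $[\xi,\zeta_i]^2\subset E$, which contains $(\zeta,\zeta)$.

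The main obstacle is precisely this last step: separate convexity a priori only produces axis-aligned segments, yet the target $(\zeta,\zeta)$ is a diagonal point not lying on any axis-aligned segment through the original data. The trick is to exploit the diagonal and symmetry conditions packed into $\widehat E$ in order to upgrade a pair of points in $E$ into a complete square of corners, and then to use the linear order on $\R$ to locate $(\zeta,\zeta)$ inside the axis-aligned subsquare anchored by $\xi$ and the appropriate extremal $\zeta_i$. This ordering argument has no analogue in higher dimensions, which is what one would expect to underlie the failure of the statement for $m>1$.
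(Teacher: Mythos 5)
Your proof is correct, and it is organized differently from the paper's. The paper does not verify the four membership conditions for $\widehat E$ directly; instead it invokes Lemma~\ref{lem:Khat_alternative} to write $\widehat E=\bigcup_{P\in\mathcal P_E}P$ as a union of maximal Cartesian subsets $P=A\times A$, observes that separate convexity of $E$ forces each such $A\subset\R$ to be an interval, and then notes that two such intervals containing the common coordinate $\zeta$ overlap, so their union is again an interval containing the whole segment $[\xi_1,\xi_2]$; the segment of pairs then sits inside $P_1\cup P_2\subset\widehat E$. Your argument is more elementary and self-contained: it isolates the square-filling property of separately convex subsets of $\R\times\R$ (which is exactly the fact the paper leaves implicit when asserting that the maximal Cartesian subsets are products of intervals) and uses the linear order on $\R$ to place $(\zeta,\zeta)$ inside the filled square $[\xi,\zeta_i]^2$; the paper uses the same order structure, but in the guise of "overlapping intervals have convex union." What the paper's route buys is reusability -- the maximal-Cartesian-subset decomposition is the workhorse for several later results (Proposition~\ref{prop:3}, Theorem~\ref{theo:YMcharacterization}) -- while your route makes completely explicit both where the hypothesis $m=1$ enters and why the diagonal point $(\zeta,\zeta)$, which is the only genuinely nontrivial containment, is reachable from the data. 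Both correctly identify that this step is what breaks for $m>1$, consistent with the counterexample in Remark~\ref{rem:hat1}\,b).
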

\begin{proof}[Proof] 
 Let $(\xi_1, \zeta), (\xi_2,\zeta) \in \widehat E$. By Lemma~\ref{lem:Khat_alternative} we know that there are $P_1, P_2\in \mathcal P_E$ 
such that $(\xi_1, \zeta)\in P_1=A_1\times A_1$ and $(\xi_2, \zeta)\in P_2 = A_2\times A_2$ with $A_1, A_2\subset \R$.  Since $E$ is separately convex, $A_1, A_2\subset \R$ are convex, and hence intervals. Observing that $\zeta\in A_1\cap A_2$, the intervals overlap, so that $(A_1\cup A_2)^{\rm co}= A_1\cup A_2$. Consequently, any convex combination $t\xi_1+(1-t)\xi_2$ with $t\in [0,1]$ lies in $A_1\cup A_2$, which implies $(t\xi_1+(1-t)\xi_2, \zeta)\in P_1\cup P_2\subset \widehat E$, cf.~Lemma~\ref{lem:Khat_alternative}. By Definition~\ref{separateconvexitysets}, $\widehat E$ is thus separately convex. %
	\end{proof}

	\begin{Remark}\label{rem:hat1} a) Due to Lemma~\ref{Khatsepconv}, it holds that $\widehat{E}^{\rm sc}\subset \widehat{E^{\rm sc}}$ for any $E\subset \R\times \R$. We point out, however, that the operations of taking the separate convexification and diagonalization of $E\subset \R\times \R$ do in general not commute, that is, $\widehat{E}^{\rm sc}\neq \widehat{E^{\rm sc}}$. In fact, the set $K_5$ in~\eqref{K4K5} satisfies $\widehat{K_5}^{\rm sc}=\emptyset$, while $\widehat{K_5^{\rm sc}} = ([-1,1]\times \{0\}\cup \{0\}\times [-1,1])^{\rm diag} = \{0\}$. 
\\[-0.2cm]

 b) Note that the statement of Lemma~\ref{Khatsepconv} fails in the vectorial case $m>1$, as the following example illustrates. Let $E=(A_1\times A_1) \cup (A_2\times A_2)$ with $A_1, A_2\subset \R^m$ convex such that $A_1\cap A_2\neq \emptyset$ and $(A_1\cup A_2)^{\rm co} \setminus (A_1\cup A_2)\neq \emptyset$. Then,
\begin{align}\label{eq86}
E^{\rm sc}= \color{olive} E^{\rm sc}_m \color{black} =E\cup [(A_1\cap A_2)\times (A_1\cup A_2)^{\rm co}]\cup [(A_1\cup A_2)^{\rm co}\times (A_1\cap A_2)],
\end{align}
and hence, in view of $E=\widehat E$, we find that
 $\widehat{E^{\rm sc}}=E$. Since $E$ is strictly contained in $E^{\rm sc}$, however, $E$ is not separately convex.  
\end{Remark}

The next lemma gives a characterization of the separate convex hull of
 symmetric and diagonal sets in the scalar case $m=1$. 

\begin{Lemma}\label{lem:square}
	Let $E\subset \R\times \R$ be symmetric and diagonal. 
Then
	\begin{align}\label{eq5}
	E^{\rm sc} = \bigcup_{(\alpha, \beta)\in  E
	}Q_{\alpha, \beta},
	\end{align}	
recalling that $Q_{\alpha, \beta}=[\alpha, \beta]\times [\alpha, \beta]$ for $\alpha, \beta\in \R$, where $[\alpha, \beta]\subset \R$ stands for the generalized interval in the sense of~\eqref{generalizedinterval}. 

\color{olive} Moreover, if $E^{\rm sc}$ is compact, then $\mathcal P_{E^{\rm sc}}\subset \{Q_{\alpha, \beta}: (\alpha, \beta)\in E\}$. 
\end{Lemma}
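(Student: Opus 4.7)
The plan is to prove the two inclusions separately and then handle the statement on maximal Cartesian subsets.

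The inclusion $\bigcup_{(\alpha,\beta)\in E}Q_{\alpha,\beta}\subset E^{\rm sc}$ is straightforward: fix $(\alpha,\beta)\in E$; since $E$ is symmetric and diagonal, all four corners $(\alpha,\alpha)$, $(\alpha,\beta)$, $(\beta,\alpha)$, $(\beta,\beta)$ lie in $E\subset E^{\rm sc}$, and separate convexity of $E^{\rm sc}$ applied first along the top and bottom horizontal edges and then along vertical slices fills in the whole square $Q_{\alpha,\beta}$.

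For the reverse inclusion, set $U:=\bigcup_{(\alpha,\beta)\in E}Q_{\alpha,\beta}$. Since $E\subset U$ trivially, it suffices to show that $U$ itself is separately convex, which will yield $E^{\rm sc}\subset U$. Given $(\xi_1,\zeta), (\xi_2,\zeta)\in U$, there exist $(\alpha_i,\beta_i)\in E$ with $\xi_i, \zeta\in [\alpha_i,\beta_i]$ for $i=1,2$. The key scalar-specific observation is that since both intervals $[\alpha_1,\beta_1]$ and $[\alpha_2,\beta_2]$ share the point $\zeta$, their union is again an interval in $\R$; therefore, any convex combination $t\xi_1+(1-t)\xi_2$ with $t\in [0,1]$ lies in this union, hence in at least one of the two intervals. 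Together with $\zeta$, which belongs to both, this places the convex combination in some $Q_{\alpha_i,\beta_i}\subset U$. The analogous argument applies to points sharing the first coordinate, establishing that $U$ is separately convex. I expect this interval-merging step, though short, to be the crux of the lemma; it is exactly the obstruction discussed in Remark~\ref{rem:hat1}\,b) that prevents this proof from carrying over to the vectorial setting $m>1$.

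For the moreover assertion, fix $P=A\times A\in \mathcal P_{E^{\rm sc}}$. The strategy is first to show that $A$ is a closed interval, and then to identify its endpoints with a pair in $E$. Iterating separate convexity of $E^{\rm sc}$ once in each coordinate, one obtains $A^{\rm co}\times A^{\rm co}\subset E^{\rm sc}$, so maximality of $P$ forces $A=A^{\rm co}$, meaning $A$ is an interval. Compactness of $E^{\rm sc}$ further gives $\overline{A}\times \overline{A}=\overline{A\times A}\subset E^{\rm sc}$, and maximality again yields $\overline{A}=A$, so $A=[\alpha,\beta]$ and $P=Q_{\alpha,\beta}$. Finally, applying the first part of the lemma to the point $(\alpha,\beta)\in E^{\rm sc}$ produces $(a,b)\in E$ with $\alpha,\beta\in [a,b]$, so $Q_{\alpha,\beta}\subset Q_{a,b}\subset E^{\rm sc}$; since $Q_{a,b}$ is Cartesian and $P$ is maximal, $Q_{a,b}=Q_{\alpha,\beta}$, whence $\{a,b\}=\{\alpha,\beta\}$ and the symmetry of $E$ gives $(\alpha,\beta)\in E$.
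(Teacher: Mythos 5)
Your proof is correct and follows essentially the same route as the paper: both inclusions are obtained by noting that each $Q_{\alpha,\beta}$ is the separately convex hull of the four corners $\{\alpha,\beta\}\times\{\alpha,\beta\}\subset E$, and that the union of the squares is itself separately convex because two intervals sharing a point merge into a single interval. The ``moreover'' part also matches the paper's argument (you just spell out in more detail why $A$ must be a compact interval before invoking \eqref{eq5} and maximality).
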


\begin{proof} 
	For any $(\alpha, \beta)\in E=\widehat E$, 
	we have that $\{\alpha, \beta\}\times\{\alpha, \beta\}\subset E$, so that 
	\begin{align*}
	Q_{\alpha,\beta} = \{\alpha, \beta\}^{\rm co} \times \{\alpha, \beta\}^{\rm co}= (\{\alpha, \beta\}\times \{\alpha, \beta\})^{\rm sc}\subset E^{\rm sc}.
	\end{align*} 
	Hence, $\bigcup_{(\alpha, \beta)\in E} Q_{\alpha, \beta} \subset E^{\rm sc}$.

For the reverse implication in~\eqref{eq5}, it suffices to observe that $E_Q:=\bigcup_{(\alpha, \beta)\in E}Q_{\alpha, \beta}\supset E$ is separately convex. Indeed, if $(\xi, \zeta_1), (\xi, \zeta_2)\in E_Q$, then $(\xi, \zeta_1)\in Q_{\alpha_1, \beta_1}$ and $(\xi, \zeta_2)\in Q_{\alpha_2, \beta_2}$ with $(\alpha_1, \beta_1), (\alpha_2, \beta_2)\in E$. 
The union of these two overlapping squares contains the line between the points $(\xi, \min\{\alpha_1, \alpha_2\})$ and $(\xi, \max\{\beta_1, \beta_2\})$, and therefore also $(\xi, t\zeta_1+(1-t)\zeta_2)$ for any $t\in (0,1)$. Since $E_Q$ is symmetric, this is enough to conclude the separate convexity of $E_Q$, 
which finishes the proof \color{olive} of~\eqref{eq5}. 

To see the add-on, consider $A\times A\in \mathcal P_{E^{\rm sc}}$. From the compactness of $E^{\rm sc}$ and the maximality property of $A\times A$, we infer that $A\subset \R$ is convex and compact, and hence, a closed interval, say $A=[\xi, \zeta]$ 
with $\xi, \zeta\in \R$ such that $(\xi, \zeta)\in E^{\rm sc}$. 
According to~\eqref{eq5}, there exists $(\alpha, \beta)\in E$ with $(\xi, \zeta)\in Q_{\alpha, \beta}\subset E^{\rm sc}$. Assuming that $(\xi, \zeta)\neq (\alpha, \beta)$ generates a contradiction with the maximality of $A\times A=Q_{\xi, \zeta}$, hence $(\xi, \zeta) = (\alpha, \beta)\in E$. 
\end{proof} 

\begin{Remark}\label{rem:failure}
a) As a consequence of Lemma~\ref{lem:square}, the properties of a symmetric and diagonal set $E\subset \R\times \R$ carry over to its separate convexification $E^{\rm sc}$. 

b) 
In view of~\eqref{eq5}, a Caratheodory type formula holds for separate convex hulls of sets as in Lemma~\ref{lem:square}. In general, this cannot be expected, see e.g.~\cite[Section~2.2.3]{Dac08}. 
Recalling~\eqref{separatelyconvex_hull} and~\eqref{construction_sc}, we have that 
	\begin{align*}
	E^{\rm sc} = E^{\rm sc}_2.
	\end{align*}
	 Indeed, if $(\xi, \zeta)\in E^{\rm sc}$, then~\eqref{eq5} implies that $(\xi, \zeta)\in Q_{\alpha, \beta}$ for some $(\alpha, \beta)\in E$, 
and there are $t, s\in [0,1]$ such that $\xi=t\alpha +(1-t)\beta$ and $\zeta = s\alpha+(1-s)\beta$. Thus,
$
(\xi, \zeta) = t(\alpha, \zeta) + (1-t)(\beta, \zeta),
$
or equivalently,
\begin{align*}
(\xi, \zeta) = ts(\alpha, \alpha)+ t(1-s)(\alpha,\beta) +(1-t)s(\beta, \alpha) + (1-t)(1-s)(\beta, \beta).
\end{align*} 

c) We emphasize that the representation formula~\eqref{eq5} is in general not true in the vectorial case, that is, for symmetric and diagonal subsets of $\R^m\times \R^m$ with $m>1$.  
To see this, consider the example of Remark~\ref{rem:hat1}\,b), where $E$ is the union of two Cartesian products generated by convex sets $A_1, A_2\subset \R^m$ with $m>1$ whose union is not convex. Then, due to the convexity of $A_1$ and $A_2$ and the fact that $E$ is not separately convex, we conclude that
\begin{align*}
E^{\rm sc} \neq E= \bigcup_{(\alpha, \beta)\in E} Q_{\alpha, \beta}. 
\end{align*} 
After diagonalization (and symmetrization), however, we observe that
\begin{align*}
\widehat{E^{\rm sc}} = \bigcup_{(\alpha, \beta)\in E} Q_{\alpha, \beta}  = E. 
\end{align*}

 d) It remains an open question at this point to find an explicit representation for $E^{\rm sc}$, or $\widehat{E^{\rm sc}}$, with general $E\subset \R^m\times \R^m$ symmetric and diagonal.

In \color{olive} a \color{black} special case 
when at most 
two of the separately convex hulls of the maximal Cartesian subsets of $E$ intersect,
we can derive a formula for $\widehat{E^{\rm sc}}$ based on~\eqref{eq86}. 
Precisely, suppose that $E = \bigcup_{P=A\times A\in \mathcal{P}_E} P$ 
and that there are 
$P_1=A_1\times A_1\in \mathcal P_E$ and $P_2=A_2\times A_2\in \mathcal P_E$ with $A_1, A_2\subset \R^m$ such that  
$P^{\rm sc}\cap Q^{\rm sc}=\emptyset$ for all $P\in \mathcal P_E$ and $Q\in \mathcal P_E\setminus\{P, P_1, P_2\}$, \color{olive} and $(A_1\cup A_2)^{\rm co}\cap A^{\rm co} =\emptyset$ for all sets $A\notin \{A_1, A_2\}$ with $A\times A \in \mathcal{P}_E$.  \color{black}

Along with the observation that $(B\times B)^{\rm sc} = B^{\rm co}\times B^{\rm co}$ for any $B\subset \R^m$, it follows that
\begin{align*}
E^{\rm sc} = \bigl[\bigcup_{P=A\times A\in \mathcal{P}_E}  A^{\rm co}\times A^{\rm co}\bigr] \cup [(A_1^{\rm co}\cap A_2^{\rm co})\times (A_1\cup A_2)^{\rm co}] \cup  [(A_1\cup A_2)^{\rm co}\times (A_1^{\rm co}\cap A_2^{\rm co})]. 
\end{align*}
Hence, 
\begin{align*}
\widehat{E^{\rm sc}} = \bigcup_{P\in \mathcal{P}_E}P^{\rm sc} = \bigcup_{P=A\times A\in \mathcal{P}_E} A^{\rm co}\times A^{\rm co} = \bigcup_{P=A\times A\in \mathcal{P}_E}\bigcup_{(\alpha, \beta) \in A^{\rm co}\times A^{\rm co}} Q_{\alpha, \beta},
\end{align*}
where we have used that the diagonalization and symmetrization of $B_1\times B_2 \cup B_2\times B_1$ for any $B_1, B_2\subset \R^m$ is given by $(B_1\cap B_2)\times (B_1\cap B_2)$.
\end{Remark} 

We continue with a lemma that will be used later on in Section~\ref{subsec:lsc_relaxation} to give a characterization of the sublevel sets of $\widehat W^{\rm slc}$. 
\begin{Lemma}\label{lem:intersectionsKj}
For $j\in \mathbb N$, let $K_j\subset \R\times \R$ be compact, symmetric and diagonal. If the sets $K_j$ are nested, i.e.~$K_j\supset K_{j+1}$ for all $j\in \mathbb N$, then 
\begin{align*}
\bigcap_{j\in \mathbb N} K_j^{\rm sc} = \big(\bigcap_{j\in \mathbb N}K_j\big)^{\rm sc}. 
\end{align*}
\end{Lemma}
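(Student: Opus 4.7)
The plan is to prove the non-trivial inclusion by exploiting the concrete representation of separately convex hulls of symmetric and diagonal subsets of $\R\times\R$ provided by Lemma~\ref{lem:square}, combined with a straightforward compactness argument along the nested family.

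First, I would dispose of the easy inclusion. Set $K:=\bigcap_{j\in\mathbb{N}} K_j$. Since $K\subset K_j$ for each $j$, monotonicity of the separate convex hull operation (immediate from Definition~\ref{separateconvexitysets}) gives $K^{\rm sc}\subset K_j^{\rm sc}$ for all $j$, and therefore $K^{\rm sc}\subset \bigcap_{j\in\mathbb{N}} K_j^{\rm sc}$.

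For the reverse inclusion, I would first observe that $K$ inherits the structural properties needed to apply Lemma~\ref{lem:square}: as an intersection of compact sets $K$ is compact, and symmetry and diagonality are preserved under arbitrary intersections (if $(\alpha,\beta)\in K$, then $(\alpha,\beta)\in K_j$ for every $j$, hence $(\beta,\alpha), (\alpha,\alpha), (\beta,\beta)\in K_j$ for every $j$, so they all belong to $K$). Now fix $(\xi,\zeta)\in\bigcap_{j} K_j^{\rm sc}$. By Lemma~\ref{lem:square} applied to each $K_j$, there exists $(\alpha_j,\beta_j)\in K_j$ with $(\xi,\zeta)\in Q_{\alpha_j,\beta_j}=[\alpha_j,\beta_j]\times[\alpha_j,\beta_j]$. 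Since $K_j$ is symmetric and $Q_{\alpha,\beta}=Q_{\beta,\alpha}$, we may relabel so that $\alpha_j\leq\beta_j$, and then membership in $Q_{\alpha_j,\beta_j}$ amounts to $\alpha_j\leq \xi,\zeta\leq\beta_j$.

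The sequence $\bigl((\alpha_j,\beta_j)\bigr)_j$ lies in the compact set $K_1$, so there is a (non-relabeled) subsequence converging to some $(\alpha,\beta)$ with $\alpha\leq\beta$. Because the family is nested, $(\alpha_j,\beta_j)\in K_k$ for all $j\geq k$, and the closedness of $K_k$ yields $(\alpha,\beta)\in K_k$ for every $k$; hence $(\alpha,\beta)\in K$. Passing the inequalities $\alpha_j\leq\xi,\zeta\leq\beta_j$ to the limit gives $(\xi,\zeta)\in Q_{\alpha,\beta}$, and a final application of Lemma~\ref{lem:square} to the symmetric, diagonal, compact set $K$ shows $(\xi,\zeta)\in Q_{\alpha,\beta}\subset K^{\rm sc}$, which completes the proof.

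There is no real obstacle here: the argument is essentially a compactness-plus-representation exercise, and the only subtlety worth flagging is the use of symmetry of $K_j$ to order $\alpha_j\leq\beta_j$ so that the condition $(\xi,\zeta)\in Q_{\alpha_j,\beta_j}$ becomes a pair of genuine inequalities amenable to passing to the limit. The argument uses the scalar-case representation of Lemma~\ref{lem:square} in an essential way, which is consistent with the fact that the analogous statement in the vectorial setting $m>1$ would require stronger hypotheses (cf.~Remark~\ref{rem:failure}\,c)).
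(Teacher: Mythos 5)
Your proof is correct and follows essentially the same route as the paper: both rely on the representation $K_j^{\rm sc}=\bigcup_{(\alpha,\beta)\in K_j}Q_{\alpha,\beta}$ from Lemma~\ref{lem:square}, extract a convergent subsequence of the witnesses $(\alpha_j,\beta_j)$ landing in $\bigcap_j K_j$ by compactness and nestedness, and pass to the limit to conclude $(\xi,\zeta)\in Q_{\alpha,\beta}$. The only cosmetic difference is that the paper passes to the limit in the convex-combination parameters $t_j,s_j\in[0,1]$, whereas you order $\alpha_j\le\beta_j$ and take limits in the resulting inequalities.
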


\begin{proof} 
One inclusion follows directly from the definition of separately convex hulls. For the other one, let $(\xi, \zeta)\in \bigcap_{j\in \mathbb N}K_j^{\rm sc}$. Then for each $j\in \mathbb N$, there exists according to~\eqref{eq5} an element $(\alpha_j, \beta_j)\in K_j$ 
with $(\xi, \zeta)\in Q_{\alpha_j, \beta_j}$,
 and therefore 
\begin{align}\label{eq77}
(\xi, \zeta) = t_js_j(\alpha_j, \alpha_j) +  t_j(1-s_j) (\alpha_j, \beta_j) + s_j(1-t_j) (\beta_j, \alpha_j)+  (1-t_j)(1-s_j)(\beta_j, \beta_j)
\end{align} 
with $s_j, t_j\in [0,1]$. 
By compactness, we know that after passing to subsequences, we can assume that $s_j\to s\in [0,1]$, $t_j\to t\in [0,1]$, and $(\alpha_j, \beta_j)\to (\alpha, \beta)\in \bigcap_{j\in \mathbb N} K_j$ as $j\to \infty$. Finally, taking $j\to \infty$ in~\eqref{eq77} shows that $(\xi, \zeta)\in Q_{\alpha, \beta} \subset (\bigcap_{j\in \mathbb N} K_j)^{\rm sc}$.
\end{proof}

Inspired by the definition of extreme points in the separately convex sense, see e.g.~\cite[Definition~7.30]{Dac08}, we introduce here directional extreme points for subsets of $\R^m\times \R^m$. These can be used to refine the characterization formula~\eqref{eq5}, see Corollary~\ref{cor:square} below.

\begin{Definition}
Let $E\subset \R^m\times \R^m$ be separately convex. Then $(\xi, \zeta)\in E$ is a directional extreme point if the identity $(\xi, \zeta) = t (\xi_1, \zeta_1) + (1-t)(\xi_2, \zeta_2)$ for any $t\in (0,1)$ and any $(\xi_1, \zeta_1), (\xi_2, \zeta_2)\in E$ with $\xi_1=\xi_2$ or $\zeta_1=\zeta_2$ implies that $\xi=\xi_1=\xi_2$ and $\zeta = \zeta_1=\zeta_2$.  

For general $E\subset \R^m\times \R^m$, we say that $(\xi, \zeta)\in \R^m\times \R^m$ is a directional extreme point if $(\xi,  \zeta)$ is a directional extreme point for $E^{\rm sc}$ in the above mentioned sense. 

We denote the set of all directional extreme points of a set $E$ by $E_{\rm dex}$.
\end{Definition}

\begin{Remark}\label{rem:extremepoints}
If $m=1$,~\cite[Proposition~7.31]{Dac08} shows that $E_{\rm dex}\subset E$. The argument can be directly extended to the vectorial setting $m>1$, exploiting~\eqref{separatelyconvex_hull} and~\eqref{construction_sc}.
\end{Remark}

The representation formula~\eqref{eq5} can be simplified by considering only unions of squares whose vertices are directional extreme points of $E$.
\begin{Corollary}\label{cor:square}
	Let $E\subset \R\times \R$ be symmetric and diagonal. Then
	\begin{align}\label{eq6}
E^{\rm sc}  = \bigcup_{(\alpha, \beta)\in E_{\rm dex}} Q_{\alpha, \beta}.
	\end{align}
\end{Corollary}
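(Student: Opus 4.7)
The plan is to establish both inclusions in~\eqref{eq6} separately. The inclusion $\supset$ is immediate: Remark~\ref{rem:extremepoints} gives $E_{\rm dex} \subset E$, so combining with~\eqref{eq5} of Lemma~\ref{lem:square} yields $\bigcup_{(\alpha, \beta) \in E_{\rm dex}} Q_{\alpha, \beta} \subset \bigcup_{(\alpha, \beta) \in E} Q_{\alpha, \beta} = E^{\rm sc}$.

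For the $\subset$ direction, fix $(\xi, \zeta) \in E^{\rm sc}$. Lemma~\ref{lem:square} first provides an initial pair $(\alpha_0, \beta_0) \in E$ with $(\xi, \zeta) \in Q_{\alpha_0, \beta_0}$; by the symmetry of $E$, we may assume $\alpha_0 \leq \beta_0$. The idea is then to enlarge the interval $[\alpha_0, \beta_0]$ as much as possible inside the family
\begin{align*}
\Fcal = \{[\gamma, \delta] \subset \R: \gamma \leq \delta, \ [\alpha_0, \beta_0] \subset [\gamma, \delta], \ Q_{\gamma, \delta} \subset E^{\rm sc}\},
\end{align*}
ordered by inclusion, until a maximal element $[\alpha^*, \beta^*]$ is reached. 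Under an implicit compactness hypothesis on $E$, Remark~\ref{rem:Esc_compact} ensures compactness (in particular, boundedness and closedness) of $E^{\rm sc}$, so chains in $\Fcal$ have uniformly bounded endpoints and the infimum of the lower ones together with the supremum of the upper ones produce a maximal element via Zorn's lemma.

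It remains to verify that $(\alpha^*, \beta^*) \in E_{\rm dex}$. Assume the contrary, so that $(\alpha^*, \beta^*) = t(\xi_1, \zeta_1) + (1-t)(\xi_2, \zeta_2)$ for some $t \in (0, 1)$ and distinct $(\xi_i, \zeta_i) \in E^{\rm sc}$ sharing one coordinate. By the symmetry of $E^{\rm sc}$, we may reduce to either $\zeta_1 = \zeta_2 = \beta^*$ with $\xi_1 < \alpha^* < \xi_2$, or $\xi_1 = \xi_2 = \alpha^*$ with $\zeta_1 < \beta^* < \zeta_2$. In the first case, applying~\eqref{eq5} to the outer point $(\xi_1, \beta^*) \in E^{\rm sc}$ produces $(\gamma, \delta) \in E$ with $\min(\gamma, \delta) \leq \xi_1 < \alpha^*$ and $\max(\gamma, \delta) \geq \beta^*$; in the second, applying~\eqref{eq5} to $(\alpha^*, \zeta_2) \in E^{\rm sc}$ gives $\min(\gamma, \delta) \leq \alpha^*$ and $\max(\gamma, \delta) \geq \zeta_2 > \beta^*$. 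Either way we obtain $[\alpha^*, \beta^*] \subsetneq [\min(\gamma, \delta), \max(\gamma, \delta)]$, and hence $Q_{\alpha^*, \beta^*} \subsetneq Q_{\gamma, \delta} \subset E^{\rm sc}$, contradicting the maximality of $[\alpha^*, \beta^*]$ in $\Fcal$.

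The main technical point is the existence of the maximal square, which rests on the compactness of $E^{\rm sc}$; without this (implicit) hypothesis the representation can fail, as the open square $(0,1)^2$ — whose set of directional extreme points is empty — illustrates. The remaining bookkeeping with the generalized intervals, in which $\alpha$ may exceed $\beta$, is routinely handled by exploiting the symmetry of $E$ throughout, allowing the WLOG reduction to ordered endpoints used above.
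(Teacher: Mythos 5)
Your proof is correct and, while it rests on the same underlying idea as the paper's proof --- a square $Q_{\alpha,\beta}$ whose corner fails to be directionally extreme can always be strictly enlarged --- it executes this idea differently and, in fact, more completely. The paper only shows that for $(\alpha,\beta)\in E\setminus E_{\rm dex}$ there is another $(\tilde\alpha,\tilde\beta)\in E$ with $Q_{\alpha,\beta}\subset Q_{\tilde\alpha,\tilde\beta}$ and then asserts that \eqref{eq5} yields the conclusion, without addressing why the enlargement process terminates at a directional extreme point. You supply exactly that step by passing to a maximal square inside $E^{\rm sc}$ (Zorn's lemma, with closedness of $E^{\rm sc}$ guaranteeing that chains of nested squares have upper bounds in $\mathcal F$) and then verifying that the maximal corner lies in $E_{\rm dex}$; the verification is sound, since in either case the pair $(\gamma,\delta)\in E$ produced by \eqref{eq5} applied to the outer endpoint of the offending segment has generalized interval strictly containing $[\alpha^*,\beta^*]$, contradicting maximality. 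Your observation that compactness is an implicit hypothesis is also correct and worth recording: for the open square $E=\,]0,1[\,\times\,]0,1[$ one has $E^{\rm sc}=E$ while $E_{\rm dex}=\emptyset$, so \eqref{eq6} fails as stated, and the corollary (together with the paper's own argument) is silently using compactness of $E$ --- which does hold in every application in the paper, where $E$ is a compact $\widehat K$ or a compact sublevel set, and which for $m=1$ passes to $E^{\rm sc}$ by Remark~\ref{rem:Esc_compact}. One cosmetic remark: the reduction to $\alpha_0\leq\beta_0$ needs no symmetry of $E$ at all, since $Q_{\alpha,\beta}=Q_{\beta,\alpha}$ by the definition of the generalized interval in \eqref{generalizedinterval}.
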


\begin{proof} 
It suffices to show that for any $(\alpha, \beta)\in E\setminus E_{\rm dex}$, there exists a point $(\tilde \alpha, \tilde \beta)\in E$ different from $(\alpha, \beta)$ such that  $Q_{\alpha, \beta}\subset Q_{\tilde \alpha, \tilde \beta}$. The statement follows then in view of~\eqref{eq5}. 

Let $(\alpha, \beta)\in E\setminus E_{\rm dex}$. Then, in particular, $(\alpha,\beta)\in E^{\rm sc}$, so that $(\alpha, \beta)\in Q_{\tilde \alpha, \tilde \beta}$ for some $(\tilde \alpha, \tilde \beta)\in E$ according to \eqref{eq5}. In other words, there are $(\tilde \alpha, \tilde \beta) \in E$ and $t, s \in [0,1]$ such that 
\begin{align*}
(\alpha, \beta) = t(\tilde \alpha, s\tilde \alpha + (1-s)\tilde \beta)+ (1-t)(\tilde \beta, s\tilde \alpha+(1-s)\tilde \beta),
\end{align*} 
cf.~Remark~\ref{rem:failure}\,a). 
Since $(\alpha, \beta)$ is not an extreme point for $E$, we can suppose that $(\tilde \alpha, \tilde\beta)\neq (\alpha, \beta)$.  Finally, the observation that $Q_{\alpha, \beta}\subset Q_{\tilde \alpha, \tilde \beta}$ concludes the proof.
\end{proof} 

We close this section with a representation of separately convex hulls in terms of measures. For $K\subset \R^m\times \R^m$ non-empty and compact, one obtains the following alternative characterization of $K^{\rm sc}$, which is essentially a reformulation of~\eqref{separatelyconvex_hull} and~\eqref{construction_sc}: 
\begin{align*}
K^{\rm sc} =\bigcup_{i=0}^\infty \{[\Lambda]: \Lambda\in \mathcal M^{\rm sc}_i(K)\} 
\end{align*}
where 
 $\mathcal M^{\rm sc}_0(K) : = \{\delta_{(\xi, \zeta)}: (\xi, \zeta)\in K\}$ and for $i\in \mathbb{N}$,
\begin{align*}
\mathcal M^{\rm sc}_i(K) := \bigl\{\lambda \Lambda_1 + (1-\lambda)\Lambda_2 &: \Lambda_1, \Lambda_2\in \mathcal M^{\rm sc}_{i-1}(K), \lambda\in [0,1],\\  & \qquad [\Lambda_1 - \Lambda_2] \in \{(0, \xi), (\xi, 0):\xi\in \R^m\}\bigr\},
\end{align*}
In general, the measures whose barycenters yield elements in $K^{\rm sc}$ cannot be expected to be of product form. If $m=1$, however, this is the case, as the next lemma shows. 

\begin{Lemma}\label{lem:KscYM}
Let $K\subset \R\times\R$ be non-empty, symmetric, diagonal, and compact. Then,
\begin{align*}
K^{\rm sc} = \{[\Lambda]: \Lambda=\nu\otimes \mu,\ \nu, \mu\in \mathcal Pr(\R), \ {\rm supp\,}\Lambda\subset K\}.
\end{align*}
\end{Lemma}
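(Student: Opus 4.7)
The plan is to exploit the explicit characterization of the separately convex hull for symmetric diagonal sets in $\R\times\R$ provided by Lemma~\ref{lem:square}. Both inclusions then reduce to elementary computations: one direction expresses any element of $K^{\rm sc}$ as the barycenter of a tensor product of two-atom measures, and the other uses the symmetry and diagonality of $K$ to promote the supports of $\nu$ and $\mu$ to a Cartesian subset of $K$.

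For the inclusion $K^{\rm sc}\subset\{[\Lambda]:\Lambda=\nu\otimes\mu,\ {\rm supp\,}\Lambda\subset K\}$, I would fix $(\xi,\zeta)\in K^{\rm sc}$. By Lemma~\ref{lem:square}, there exist $(\alpha,\beta)\in K$ with $(\xi,\zeta)\in Q_{\alpha,\beta}=[\alpha,\beta]\times[\alpha,\beta]$, so that $\xi=t\alpha+(1-t)\beta$ and $\zeta=s\alpha+(1-s)\beta$ for suitable $s,t\in[0,1]$. Setting
\begin{equation*}
\nu:=t\delta_\alpha+(1-t)\delta_\beta,\qquad \mu:=s\delta_\alpha+(1-s)\delta_\beta,
\end{equation*}
one has $[\nu]=\xi$, $[\mu]=\zeta$, and ${\rm supp\,}(\nu\otimes\mu)\subset\{\alpha,\beta\}\times\{\alpha,\beta\}$. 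Since $(\alpha,\beta)\in K=\widehat K$, the four points $(\alpha,\alpha),(\alpha,\beta),(\beta,\alpha),(\beta,\beta)$ all belong to $K$, so ${\rm supp\,}(\nu\otimes\mu)\subset K$, as required.

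For the reverse inclusion, let $\Lambda=\nu\otimes\mu$ with $\nu,\mu\in\mathcal Pr(\R)$ and ${\rm supp\,}\Lambda\subset K$. Denote $A:={\rm supp\,}\nu$ and $B:={\rm supp\,}\mu$; these are compact subsets of $\R$ (since $K$ is compact) and, by the product structure of $\Lambda$, one has $A\times B={\rm supp\,}\Lambda\subset K$. Using the symmetry of $K$ yields $B\times A\subset K$, and diagonality then forces $(\alpha,\alpha),(\beta,\beta)\in K$ for any $\alpha\in A$ and $\beta\in B$, so that $(A\cup B)\times(A\cup B)\subset K$. In particular, $A\times A\subset K$ and $B\times B\subset K$. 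I would then observe that for any $C\subset\R$ compact one has $(C\times C)^{\rm sc}=C^{\rm co}\times C^{\rm co}$, because $C^{\rm co}$ is a closed interval and a Cartesian product of intervals is obviously separately convex, while the reverse inclusion follows from the standard two-step decomposition $\xi=t\alpha_1+(1-t)\alpha_2$, $\zeta=s\beta_1+(1-s)\beta_2$ used in the construction~\eqref{construction_sc}. Applying this to $C=A\cup B$, and using that $[\nu]\in A^{\rm co}\subset(A\cup B)^{\rm co}$ and $[\mu]\in B^{\rm co}\subset(A\cup B)^{\rm co}$, one concludes
\begin{equation*}
[\Lambda]=([\nu],[\mu])\in (A\cup B)^{\rm co}\times(A\cup B)^{\rm co}=((A\cup B)\times(A\cup B))^{\rm sc}\subset K^{\rm sc}.
\end{equation*}

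I do not expect any serious obstacle here: the only nontrivial ingredient is Lemma~\ref{lem:square}, which has already been established, and the identity $(C\times C)^{\rm sc}=C^{\rm co}\times C^{\rm co}$, which is elementary. The scalar setting $m=1$ is essential, as it guarantees that $A^{\rm co}$ and $B^{\rm co}$ are intervals and that Lemma~\ref{lem:square} applies; in higher dimensions the analogous statement fails, consistently with Remark~\ref{rem:failure}\,c).
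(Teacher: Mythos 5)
Your first inclusion is correct and essentially identical to the paper's argument (the paper invokes Corollary~\ref{cor:square} rather than Lemma~\ref{lem:square}, which is immaterial). The reverse inclusion, however, contains a false step. From $A\times B\subset K$ together with the symmetry and diagonality of $K$ you can only conclude that $(\alpha,\beta),(\beta,\alpha),(\alpha,\alpha),(\beta,\beta)\in K$ for all $\alpha\in A$, $\beta\in B$; this does \emph{not} give $(\alpha_1,\alpha_2)\in K$ for distinct $\alpha_1,\alpha_2\in A$, so neither $A\times A\subset K$ nor $(A\cup B)\times(A\cup B)\subset K$ follows. Concretely, take
\begin{equation*}
K=\{(0,0),(1,1),(2,2),(0,2),(2,0),(1,2),(2,1)\},\qquad \nu=\tfrac12(\delta_0+\delta_1),\quad \mu=\delta_2 .
\end{equation*}
Then $K$ is non-empty, compact, symmetric and diagonal ($K=\widehat K$), and ${\rm supp\,}(\nu\otimes\mu)=\{0,1\}\times\{2\}\subset K$, yet $(0,1)\notin K$, so with $A=\{0,1\}$, $B=\{2\}$ one has $A\times A\not\subset K$ and $(A\cup B)\times(A\cup B)\not\subset K$. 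The deduction you base the final display on is therefore invalid.

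The good news is that the detour through $A\cup B$ is unnecessary: you already have $A\times B={\rm supp\,}\Lambda\subset K$, and your own observation (specialized to products of two possibly different sets) gives $(A\times B)^{\rm sc}=A^{\rm co}\times B^{\rm co}$ for $A,B\subset\R$ compact, since $A^{\rm co}\times B^{\rm co}$ is separately convex and every point of it is reached in two steps of~\eqref{construction_sc}. Hence $([\nu],[\mu])\in A^{\rm co}\times B^{\rm co}\subset K^{\rm sc}$ directly, without using symmetry or diagonality in this direction at all. For comparison, the paper proves this inclusion by a different route: it applies the separate Jensen inequality of Lemma~\ref{lem:sepJensen} to the characteristic function $\chi_{K^{\rm sc}}$, which is lower semicontinuous because $K^{\rm sc}$ is compact (Remark~\ref{rem:Esc_compact}), obtaining $0\ge\chi_{K^{\rm sc}}([\nu],[\mu])$ from ${\rm supp\,}\Lambda\subset K$. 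Either repair is short; as written, though, your argument does not go through.
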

\begin{proof} 
One inclusion is a simple consequence of Corollary~\ref{cor:square}. Indeed, if $(\xi, \zeta)\in K^{\rm sc}$, then by~\eqref{eq6} there is $(\alpha, \beta)\in K_{\rm dex} \subset K$ such that $(\xi, \zeta)\in Q_{\alpha, \beta}$. We choose $t, s\in [0,1]$ such that $\xi=t \alpha + (1-t)\beta$ and $\zeta=s\alpha +(1-s) \beta$, and set $\nu=t\delta_\alpha + (1-t)\delta_\beta\in \mathcal Pr (\R)$ and $\eta=s\delta_\alpha + (1-s)\delta_\beta\in \mathcal Pr (\R)$. Then 
\begin{align*}
\Lambda  & = \nu\otimes \eta =st \delta_{(\alpha, \alpha)} + t(1-s) \delta_{(\alpha, \beta)}+ s(1-t)\delta_{(\beta, \alpha)} + (1-t)(1-s) \delta_{(\beta, \beta)}
\end{align*}
is a product measure supported in $\{\alpha, \beta\}\times \{\alpha, \beta\}\subset K$ such that $[\Lambda]= ([\nu], [\eta])=(\xi, \zeta)$. 

For the reverse implication, let $\Lambda=\nu\otimes \mu$ with $\nu, \mu\in \mathcal Pr(\R)$ such that ${\rm supp\,} \Lambda\subset K$. 
Since the characteristic function $\chi_{K^{\rm sc}}:\R\times \R \to [0, \infty]$ is lower semicontinuous due to the compactness of $K$, which again implies that $K^{\rm sc}$ is compact according to Remark~\ref{rem:Esc_compact}, it follows from Lemma~\ref{lem:sepJensen} that 
$$
\int_{\mathbb R}\int_{\mathbb R} \chi_{K^{\rm sc}}(\xi,\zeta)\,d\nu(\xi)\, d\mu(\eta)\geq \chi_{K^{\rm sc}}([\nu], [\mu]).
$$
Recalling that $\chi_K\geq \chi_{K^{\rm sc}}$, the assumption that ${\rm supp\,}\Lambda\subset K$
yields $0\geq \chi_{K^{\rm sc}}([\nu], [\mu])$, or equivalently, $[\Lambda] = ([\nu], [\mu])\in K^{\rm sc}$, as stated. 
\end{proof} 

\begin{Remark}
	\label{remKscYMdimm}
	If $m>1$ and $K\subset \mathbb R^m\times \mathbb R^m$ is non-empty, symmetric, diagonal, and compact such that $K^{\rm sc}$ is also compact, and the structure condition
	\begin{equation}\label{structKschat}
\widehat{K^{\rm sc}}=\bigcup_{(\alpha, \beta)\in K} Q_{\alpha, \beta}
	\end{equation}
	 with cubes $Q_{\alpha, \beta}$ as defined in~\eqref{cubesQalphabeta} 
	holds, then analogous arguments to those in the proof of~the previous lemma allow us to derive that
	\begin{align*}
	\widehat{K^{\rm sc}} \subset \{[\Lambda]: \Lambda=\nu\otimes \mu,\ \nu, \mu\in \mathcal Pr(\R^m), \ {\rm supp\,}\Lambda\subset K\} \subset {K^{\rm sc}}.
	\end{align*}
\end{Remark}

\section{Nonlocal inclusions}\label{sec:nonlocal_inclusions}
For a set $E\subset \R^m\times \R^m$, we consider
\begin{align}\label{def:AK}
\mathcal{A}_E :=\{u\in L^\infty(\Omega;\R^m): v_u(x,y):=(u(x), u(y))\in E \text{ for 
a.e.~$(x,y)\in \Omega\times \Omega$}\}.
\end{align}

The main focus of this section is to prove the characterization result for the limits of weakly converging sequences in $\mathcal{A}_K$ with compact $K\subset \R\times \R$ stated in Theorem~\ref{theo:weakclosure_intro}. In the first subsection, we lay important groundwork by investigating the role of the set $E$ in $\mathcal A_E$. This gives important structural insight into the interplay between nonlocality effects and pointwise constraints, which are also interesting per se.

\subsection{Alternative representations of \boldmath{$\mathcal{A}_E$}}\label{subsec:alternative} The next result shows that the set $E\setminus \widehat E$ has no influence on the solutions to the nonlocal inclusion $(u(x), u(y))\in E$ for 
a.e.~$(x,y) \in \Omega\times \Omega$.

\begin{Proposition}\label{lem:Ik=ItildeK} 
Let $E, F\subset \R^m\times \R^m$ be closed. 
Then $\mathcal A_E= \mathcal A_F$ if and only if $\widehat{E}=\widehat{F}$. 

In particular, 
\begin{align}\label{AE=AEhat}
\mathcal A_{E} = \mathcal A_{\widehat E}.
\end{align} 
\end{Proposition}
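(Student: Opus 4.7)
The heart of the proposition is the identity $\mathcal{A}_E = \mathcal{A}_{\widehat{E}}$, from which the equivalence $\mathcal{A}_E = \mathcal{A}_F \Leftrightarrow \widehat{E} = \widehat{F}$ should follow by a short argument. Since $\widehat{E} \subset E$, the inclusion $\mathcal{A}_{\widehat{E}} \subset \mathcal{A}_E$ is immediate. The bulk of the work is therefore the reverse inclusion, and the key device I would introduce is the essential image $V \subset \R^m$ of $u\in \mathcal{A}_E$, i.e., the smallest closed set such that $u(x)\in V$ for a.e.~$x\in \Omega$.

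The central claim is that $V\times V\subset E$. To prove this, fix $\xi,\eta\in V$. By definition of the essential image, both sets $A_\xi:=\{x\in\Omega: u(x)\in B_\varepsilon(\xi)\}$ and $A_\eta:=\{x\in\Omega: u(x)\in B_\varepsilon(\eta)\}$ have positive Lebesgue measure for every $\varepsilon>0$, so $A_\xi\times A_\eta$ has positive measure in $\Omega\times\Omega$. Combining this with $(u(x),u(y))\in E$ for a.e.~$(x,y)$ produces a pair $(x,y)\in A_\xi\times A_\eta$ with $(u(x),u(y))\in E$ and $|(u(x),u(y))-(\xi,\eta)|\leq \sqrt{2}\varepsilon$. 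Letting $\varepsilon\to 0$ and invoking the closedness of $E$ yields $(\xi,\eta)\in E$. This is the step where closedness of $E$ is indispensable and where I expect most care is needed, since without closedness one only obtains density.

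Once $V\times V \subset E$ is established, the passage to $u\in \mathcal{A}_{\widehat{E}}$ is automatic: for a.e.~$(x,y)\in \Omega\times\Omega$, all four points $(u(x),u(x))$, $(u(x),u(y))$, $(u(y),u(x))$, $(u(y),u(y))$ lie in $V\times V\subset E$, so $(u(x),u(y))\in \widehat{E}$ by the definition~\eqref{hatK}. This gives $\mathcal{A}_E\subset \mathcal{A}_{\widehat{E}}$ and settles the identity~\eqref{AE=AEhat}.

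For the biconditional, the direction $\widehat{E}=\widehat{F}\Rightarrow \mathcal{A}_E=\mathcal{A}_F$ reduces, via~\eqref{AE=AEhat}, to $\mathcal{A}_{\widehat{E}}=\mathcal{A}_{\widehat{F}}$, which is trivial. For the converse, assume $\mathcal{A}_E=\mathcal{A}_F$ and fix $(\alpha,\beta)\in \widehat{E}$, so that $\{\alpha,\beta\}\times\{\alpha,\beta\}\subset E$. Choose any measurable partition $\Omega=\Omega_1\cup \Omega_2$ with both parts of positive measure (or take $\Omega_1=\Omega$ if $\alpha=\beta$) and define $u:=\alpha\,\mathbbm{1}_{\Omega_1}+\beta\,\mathbbm{1}_{\Omega_2}\in L^\infty(\Omega;\R^m)$. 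Then $u\in \mathcal{A}_E=\mathcal{A}_F$, hence every pair in $\{\alpha,\beta\}\times\{\alpha,\beta\}$ lies in $F$, which by~\eqref{hatK} means $(\alpha,\beta)\in \widehat{F}$. Reversing the roles of $E$ and $F$ concludes the proof.
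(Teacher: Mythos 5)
Your proposal is correct, and it takes a genuinely different route to the key identity $\mathcal A_E=\mathcal A_{\widehat E}$ than the paper does. The paper first reduces to symmetric $E$, then proves a removal principle: any $B$ with $[\pi_1(B)\times\pi_1(B)]\cap E=\emptyset$ (or the analogue for $\pi_2$) can be deleted from $E$ without changing $\mathcal A_E$, via a Fubini/sections argument; it then exhausts $E\setminus\widehat E$ by countably many such strips $\R^m\times\,]\alpha_\xi,\beta_\xi[$ and $]\alpha_\xi,\beta_\xi[\,\times\R^m$ indexed by rational $\xi$ with $(\xi,\xi)\notin E$, using the openness of $E^c$ and Lemma~\ref{lem:tool} to pass to the countable intersection. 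You instead fix a single $u\in\mathcal A_E$, take its essential range $V$, and show $V\times V\subset E$ directly: the sets $A_\xi$, $A_\eta$ have positive measure for every $\varepsilon>0$, so the positive-measure rectangle $A_\xi\times A_\eta$ must meet the full-measure set where $(u(x),u(y))\in E$, and closedness of $E$ finishes the limit $\varepsilon\to0$. Since $u(x)\in V$ a.e., membership in $\widehat E$ follows pointwise. This is shorter, avoids the countable covering and the auxiliary intersection lemma, and as a byproduct essentially delivers the Cartesian-product representation of Proposition~\ref{prop:3} (every $u\in\mathcal A_E$ lies in $\mathcal A_{V\times V}$ with $V\times V\subset E$) without the simple-function approximation and Blaschke selection used there. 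Your argument for the direction $\mathcal A_E=\mathcal A_F\Rightarrow\widehat E=\widehat F$, via the two-valued test function on a partition of $\Omega$ into sets of positive measure, coincides with the paper's. The only points worth making explicit in a final write-up are the standard facts you rely on: that the essential range is closed, that $u(x)$ belongs to it for a.e.~$x$ (a Lindel\"of argument on the complement), and that it is nonempty because $\Omega$ has positive measure.
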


\begin{proof} 
To show that equality of $\mathcal A_E$ and $\mathcal A_F$ implies that $\widehat E=\widehat F$, it suffices to prove that $\widehat E\subset \widehat F$. In fact, the reverse inclusion follows then from interchanging the roles of $E$ and $F$.
The case $\widehat E=\emptyset$ is trivial. Otherwise, let $(\xi, \zeta)\in \widehat E$, and consider the piecewise constant function
\begin{align*}
u(x) =\begin{cases} \xi &\text{for $x\in \Omega_\xi$,}\\ \zeta & \text{for $x\in \Omega_\zeta:=\Omega\setminus \Omega_\xi$,}
\end{cases} \qquad x\in \Omega,
\end{align*}
where $\Omega_\xi\subset \Omega$ is measurable with $\mathcal{L}^n(\Omega_\xi)>0$ and $\mathcal{L}^n(\Omega\setminus \Omega_\xi)>0$. By definition, $u\in L^\infty(\Omega;\R^m)$, and since $(\xi, \zeta)\in \widehat E\subset E$, it holds that also $(\xi, \xi), (\zeta,\zeta), (\zeta, \xi)\in E$. Hence, $u\in \mathcal A_{E} = \mathcal A_F$, and therefore $(\zeta, \xi), (\xi, \zeta), (\xi, \xi), (\zeta, \zeta)\in F$. This shows $(\xi, \zeta)\in \widehat F$. 

Notice that the converse implication, i.e.~$\mathcal A_E=\mathcal A_F$ if $\widehat E=\widehat F$, follows immediately, if one knows~\eqref{AE=AEhat}. To prove the latter, we start by observing that  $\mathcal A_E=\mathcal A_{E^{\rm sym}}$. Indeed, if $u\in \mathcal A_E$, then also $u\in \mathcal A_{E^T}$, and therefore $u\in \mathcal A_{E^{\rm sym}}$, because $E^{\rm sym}=E\cap E^T$. 
Thus, from now we assume $E$ to be symmetric.

Next, we will show that a specific class of subsets of $E$ can be removed without affecting $\mathcal{A}_E$. Precisely, if
$B\subset \R^m\times \mathbb R^m$ is such that 
 \begin{align}\label{condition_C}
 [\pi_{1}(B)\times \pi_{1}(B)]\cap E= \emptyset \quad \text{or}\quad [\pi_{2}(B)\times \pi_{2}(B)]\cap E= \emptyset,
 \end{align}
then 
\begin{align}\label{AK=AKC}
\mathcal{A}_{E} = \mathcal{A}_{E\setminus B}.
\end{align} 

To see this, let $B\subset \R^m\times \R^m$ satisfy the first condition in~\eqref{condition_C} (the reasoning in case the second condition holds is analogous), and consider $u\in \mathcal{A}_E$, assuming to the contrary that $u\notin \mathcal{A}_{E\setminus B}$.
	 Then there exists an $(\mathcal{L}^n\otimes \mathcal{L}^n)$-measurable set $N\subset \Omega\times \Omega$ with positive measure such that $(u(x), u(y))\in B$ for all $(x,y)\in N$. 
	By Tonelli's theorem or Cavalieri's principle, there exists $\bar{y}\in \Omega$ with $\mathcal{L}^n(\mathfrak{N}^{\bar y}_1)>0$; recall that $\mathfrak{N}^{\bar y}_1$ stands for the section in the first variable of $N$ at $\bar{y}$, cf.~Subsection~\ref{not}. Hence, 
	\begin{align*}
	(u(x), u(\bar y)) \in B \quad \text{for all $x\in \mathfrak{N}^{\bar y}_1$,} 
	\end{align*} 
	or equivalently, using projections, 
	$u(x)\in \pi_1(B)$ for $x\in \mathfrak{N}_1^{\bar y}$.
	This leads to 
	\begin{align*}
	(u(x), u(y))\in \pi_{1}(B)\times \pi_{1}(B) \quad \text{for all $(x, y)\in \mathfrak{N}^{\bar y}_1 \times \mathfrak{N}^{\bar y}_1$. }\end{align*}
	In view of~\eqref{condition_C}, we infer that $(u(x), u(y))\notin E$ for $(x, y)\in \mathfrak{N}^{\bar y}_1 \times \mathfrak{N}^{\bar y}_1$, which contradicts the assumption that $u\in \mathcal A_E$, and concludes the proof of~\eqref{AK=AKC}.

Next we apply~\eqref{AK=AKC} to suitable sets whose union amounts to $E\setminus \widehat E$. 
Owing to the fact that the complement $E^c$ of $E$ in $\R^m\times \R^m$ is open, one can find for any vector of rational numbers $\xi\in \mathbb{Q}^m$ with $(\xi, \xi)\notin E$ an open cube $]\alpha_\xi, \beta_\xi[\times]\alpha_{\xi}, \beta_{\xi}[\subset E^c$ with $\alpha_\xi, \beta_\xi\in \R^m$ such that $\xi\in ]\alpha_\xi, \beta_\xi[$. 

For each such $\xi$, one can apply~\eqref{AK=AKC} with the two choices $B=\R^m\times ]\alpha_\xi, \beta_\xi[$ and $B=]\alpha_\xi, \beta_\xi[\times \R^m$
to deduce that 
\begin{align}\label{AE=AEBU}
\mathcal A_E = \mathcal A_{E\setminus B_\cup} \quad \text{with } B_\cup:= \bigcup_{\xi\in \mathbb{Q}^m, (\xi, \xi)\notin E} (\R^m\times ]\alpha_\xi, \beta_\xi[) \cup (]\alpha_\xi, \beta_\xi[ \times \R^m) .
\end{align}  
To see this, let $(\xi_i)_{i\in \mathbb{N}}$ be an enumeration of $\{\xi\in \mathbb{Q}^{m}: (\xi, \xi)\notin E\}$ and set
\begin{align*}
B_\cup^k :=  \bigcup_{i=1}^k (\R^m\times ]\alpha_{\xi_i}, \beta_{\xi_i}[) \cup (]\alpha_{\xi_i}, \beta_{\xi_i}[ \times \R^m) \quad \text{for $k\in \mathbb{N}$. }
\end{align*} 
Then,  
~\eqref{AE=AEBU} follows from the line of identities
\begin{align*}
\displaystyle\mathcal A = \bigcap_{k\in \mathbb{N}} \mathcal{A}_{E\setminus B_\cup^k} = \mathcal{A}_{\cap_{k\in \mathbb{N}}E\setminus B_{\cup}^k} = \mathcal{A}_{E\setminus \cup_{k\in \mathbb{N}}B_\cup^k} = \mathcal{A}_{E\setminus B_\cup},
\end{align*}
where the first equality results from an iterative application of~\eqref{AK=AKC} to $]\alpha_{\xi_i}, \beta_{\xi_i}[\times \R$ and $\R\times ]\alpha_i, \beta_i[$ for $i=1, \ldots, k$, leading to $\mathcal{A}=\mathcal{A}_{E\setminus B_\cup^k}$ for any $k\in \mathbb{N}$. 
While the second identity is a consequence of Lemma~\ref{lem:tool} below, the third identity is due to basic properties of unions and intersections of sets, and the last step makes use of the fact that $B_\cup=\bigcup_{k\in \mathbb{N}} B_\cup^k$ by construction.

Finally, accounting for~\eqref{tildeK_construction} along with the observation that $B_\cup=B_E$ yields that $E\setminus B_{\cup} = \widehat E$. In view of~\eqref{AE=AEBU}, this concludes the proof of~\eqref{AE=AEhat}. 
\end{proof}

\begin{Lemma}\label{lem:tool}
Let $\{E_k\}_{k\in \mathbb{N}}$ be a family of sets in $\R^m\times \R^m$. Then,
\begin{align*}
\bigcap_{k\in\mathbb{N}}\mathcal{A}_{E_k} = \mathcal{A}_{\cap_{k\in \mathbb{N}}E_k}. 
\end{align*} 
\end{Lemma}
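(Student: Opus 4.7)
The plan is to prove the claimed equality by double inclusion, relying on the elementary but crucial fact that a countable union of Lebesgue-null subsets of $\Omega\times \Omega$ is still Lebesgue-null.

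For the inclusion $\bigcap_{k\in \mathbb N}\mathcal{A}_{E_k}\subset \mathcal{A}_{\cap_{k\in \mathbb N}E_k}$, I would pick any $u\in \bigcap_{k\in \mathbb N}\mathcal{A}_{E_k}$ and, for each $k\in \mathbb N$, exploit the definition of $\mathcal{A}_{E_k}$ in~\eqref{def:AK} to produce a null set $N_k\subset \Omega\times \Omega$ such that $(u(x),u(y))\in E_k$ for every $(x,y)\in (\Omega\times \Omega)\setminus N_k$. Setting $N:=\bigcup_{k\in \mathbb N}N_k$, countable additivity of the Lebesgue measure gives $(\mathcal{L}^n\otimes \mathcal{L}^n)(N)=0$, and for any $(x,y)\in (\Omega\times \Omega)\setminus N$ one has $(u(x),u(y))\in E_k$ for every $k$, i.e.\ $(u(x),u(y))\in \bigcap_{k\in \mathbb N}E_k$. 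Hence $u\in \mathcal{A}_{\cap_k E_k}$.

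The reverse inclusion $\mathcal{A}_{\cap_{k\in \mathbb N}E_k}\subset \bigcap_{k\in \mathbb N}\mathcal{A}_{E_k}$ is even simpler: for $u\in \mathcal{A}_{\cap_k E_k}$ there is a single null set $N$ outside of which $(u(x),u(y))\in \bigcap_{k\in \mathbb N}E_k\subset E_k$ holds for every $k\in \mathbb N$, so $u\in \mathcal{A}_{E_k}$ for each $k$.

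There is no real obstacle in this argument; the only subtlety worth highlighting is the passage from the individual almost-everywhere conditions for each $E_k$ to a simultaneous almost-everywhere condition, which is exactly where the countability of the family matters (the statement would fail for uncountable intersections in general). This is why the lemma is formulated for a sequence $\{E_k\}_{k\in \mathbb N}$ and why it fits the application inside Proposition~\ref{lem:Ik=ItildeK}, where it is invoked precisely to pass to the limit $k\to \infty$ in the enumeration of the rational points generating the sets $B_\cup^k$.
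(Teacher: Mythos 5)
Your proposal is correct and follows essentially the same argument as the paper: both directions are handled by taking the countable union of the null sets $N_k$ for the forward inclusion, while the reverse inclusion is immediate since $\bigcap_k E_k\subset E_k$. Your added remark on why countability is essential is accurate but not part of the paper's (very brief) proof.
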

\begin{proof}  
If $u\in \bigcap_{k\in \mathbb{N}} \mathcal{A}_{E_k}$, one can find for every $k\in \mathbb{N}$ a set $N_k\subset \R^m\times \R^m$ of zero $\mathcal{L}^{2m}$-measure such that $(u(x), u(y))\in E_k$ for all $(x,y)\in \R^m\times \R^m\setminus N_k$. With $N:=\bigcup_{k=1}^\infty N_k$, we have a set of vanishing measure with the property that
every 
$(x,y)\in \R^m\times \R^m\setminus N$ satisfies
\begin{align*}
(u(x), u(y))\in \bigcap_{k\in \mathbb{N}} E_k,
\end{align*} 
meaning that $u\in \mathcal{A}_{\cap_{k\in \mathbb{N}}E_k}$. This proves $\bigcap_{k\in\mathbb{N}}\mathcal{A}_{E_k} \subset \mathcal{A}_{\cap_{k\in \mathbb{N}}E_k}$. The other implication is trivial. 
\end{proof}

\begin{Remark} \label{rem:AK=AKtilde}
If $E\subset \R^m \times \R^m$ is not closed, the identity $\mathcal A_E=\mathcal A_{\widehat E} $ is in general not true. To see this, let $n=m$ and $\Omega=(0,1)^m$, and consider 
\begin{align*}
E=[0,1]^m \times [0,1]^m\setminus \{(\xi, \xi): \xi \in \R^{m}\}.
\end{align*}
Then, $\widehat E=\emptyset$, and hence, $\mathcal A_{\widehat E}=\emptyset$. On the other hand, the identity map $u(x)= x$ for $x\in\Omega$ satisfies $(u(x), u(y)) = (x,y)\in E$ for all $(x, y)\in \Omega\times \Omega\setminus \{(x,x):x\in \Omega\}$. 
Since the diagonal $\{(\xi, \xi):\xi\in\R^m\}$ has zero Lebesgue-measure in $\R^m$, $u\in \mathcal A_E$. 
\end{Remark}

The next lemma is the basis for a useful approximation result, which is formulated below in Corollary~\ref{cor:approx}. For shorter notation, we write $S^\infty(\Omega;\R^m)$ for the subspace of $L^\infty(\Omega;\R^m)$ of simple functions, i.e., $u\in S^\infty(\Omega;\R^m)$ if
\begin{align}\label{simple}
u(x) = \sum_{i=1}^k \mathbbm{1}_{\Omega^{(i)}}\xi^{(i)}, \quad x\in \Omega,
\end{align}
with $\{\Omega^{(i)}\}_{i=1,\ldots, k}$ a partition of $\Omega$ into $\mathcal L^n$-measurable sets and $\xi^{(i)}\in \R^m$ for $i=1, \ldots, k$. By possibly choosing a different representative, one may assume without loss of generality that $\mathcal L^n(\Omega^{(i)})>0$ for all $i=1, \ldots, k$.

\begin{Lemma}\label{lem:approx}
Let $E\subset \R^m\times \R^m$ be symmetric and diagonal. 
Then, for every $u\in \mathcal A_E$ there exists a sequence $(u_j)_j\subset\mathcal A_E\cap S^\infty(\Omega;\R^m)$ with $u_j\to u$ in $L^\infty(\Omega;\R^m)$.
\end{Lemma}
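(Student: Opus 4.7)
The strategy is to construct, for each $\varepsilon>0$, a simple function $u_\varepsilon\in\mathcal A_E\cap S^\infty(\Omega;\R^m)$ with $\|u-u_\varepsilon\|_{L^\infty(\Omega;\R^m)}<\varepsilon$; choosing $\varepsilon=1/j$ then produces the desired approximating sequence converging strongly in $L^\infty$.

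First, I would exploit that $u\in L^\infty(\Omega;\R^m)$ is essentially bounded, so its essential image is contained in a bounded subset of $\R^m$ that can be covered by finitely many open balls $B_{\varepsilon/2}(\eta^{(1)}),\dots,B_{\varepsilon/2}(\eta^{(k)})$. Setting
\[
\Omega^{(i)}:=u^{-1}\bigl(B_{\varepsilon/2}(\eta^{(i)})\bigr)\setminus\bigcup_{l<i}u^{-1}\bigl(B_{\varepsilon/2}(\eta^{(l)})\bigr),\qquad i=1,\dots,k,
\]
produces a measurable partition of $\Omega$ (up to a null set) on each cell of which $u$ oscillates by at most $\varepsilon$. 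After discarding cells of vanishing measure and relabeling, I may assume $\mathcal L^n(\Omega^{(i)})>0$ for every $i$.

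The crux is to select representatives $x_i\in\Omega^{(i)}$ in such a way that the values $\xi^{(i)}:=u(x_i)$ are \emph{simultaneously} compatible with the nonlocal constraint, i.e.~$(\xi^{(i)},\xi^{(l)})\in E$ for all $i,l\in\{1,\dots,k\}$. Since $u\in\mathcal A_E$, the set $\mathcal N:=\{(x,y)\in\Omega\times\Omega:(u(x),u(y))\notin E\}$ has $(\mathcal L^n\otimes\mathcal L^n)$-measure zero, so by Fubini, for every ordered pair $i\neq l$ the set
\[
\{(x_1,\dots,x_k)\in\Omega^{(1)}\times\cdots\times\Omega^{(k)}:(u(x_i),u(x_l))\notin E\}
\]
has $\mathcal L^{nk}$-measure zero in the positive-measure product $\Omega^{(1)}\times\cdots\times\Omega^{(k)}$. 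Taking the finite union over all such ordered pairs and picking any $(x_1,\dots,x_k)$ outside this null set yields $(u(x_i),u(x_l))\in E$ whenever $i\neq l$. To handle the diagonal cases $i=l$, I would appeal to diagonality of $E$: if $k\geq 2$, the existence of some $l\neq i$ with $(\xi^{(i)},\xi^{(l)})\in E$ forces $(\xi^{(i)},\xi^{(i)})\in E$ via $E=E^{\rm diag}$; the degenerate case $k=1$ is settled by a short Fubini--Tonelli argument showing that $u\in\mathcal A_E$ together with diagonality already implies $(u(x),u(x))\in E$ for a.e.~$x\in\Omega$, so $x_1$ can be chosen accordingly.

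Finally, I would define $u_\varepsilon:=\sum_{i=1}^k\mathbbm 1_{\Omega^{(i)}}\xi^{(i)}\in S^\infty(\Omega;\R^m)$. The inclusion $u_\varepsilon\in\mathcal A_E$ is then immediate, since on $\Omega^{(i)}\times\Omega^{(l)}$ one has $(u_\varepsilon(x),u_\varepsilon(y))=(\xi^{(i)},\xi^{(l)})\in E$, and the uniform estimate $\|u-u_\varepsilon\|_{L^\infty}<\varepsilon$ follows because both $u(x)$ and $\xi^{(i)}$ lie in $B_{\varepsilon/2}(\eta^{(i)})$ for a.e.~$x\in\Omega^{(i)}$. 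The main obstacle is precisely the simultaneous selection of compatible representatives across all cells; this is where the nonlocal structure of $\mathcal A_E$ interacts with the pointwise constraint and where the hypothesis $E=E^{\rm diag}$ is genuinely used, whereas symmetry of $E$ plays no visible role in the argument.
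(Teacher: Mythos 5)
Your proposal is correct and follows essentially the same route as the paper's proof: partition the essential range of $u$ into cells of diameter less than $\varepsilon$, pull back to a positive-measure partition of $\Omega$, and select one representative per cell so that every pair of selected values lies in $E$, invoking diagonality of $E$ for the diagonal pairs. The only difference is cosmetic --- you pick all representatives at once via a Fubini argument on the $k$-fold product $\Omega^{(1)}\times\cdots\times\Omega^{(k)}$, whereas the paper selects them iteratively through full-measure sections of the good set $M=\{(x,y):(u(x),u(y))\in E\}$; both implement the same measure-theoretic fact, and your observation that symmetry of $E$ is not genuinely needed here is consistent with the paper's argument as well.
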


\begin{proof} 
The proof follows along the lines of standard arguments for approximating unconstrained bounded functions uniformly by simple ones. Yet, particular care is needed here when choosing the function values to guarantee that the nonlocal inclusion defining $\mathcal A_E$ is not violated. This last step critically exploits the assumption that $E=\widehat E$. 
For clarification regarding notations throughout this proof, we refer the reader to~Subsection~\ref{not}.

After choosing a suitable representative of $u\in \mathcal A_E$, we may assume that $\underline{z}\leq u(x)\leq \overline{z}$ for all $x\in \Omega$ with $\underline{z}, \overline{z}\in \R^m$. 
For $j\in \mathbb{N}$, we partition the set $[\underline{z}_1, \overline{z}_1[\times \dots \times[\underline{z}_m,\overline{z}_m[$ into $k$ half-open cuboids $Q_j^{(i)}\subset \R^m$ 
such that 
 \begin{align}\label{est123}
 {\rm diam\ } Q_j^{(i)} < \frac{1}{j}  \quad \text{ for all $i=1, \ldots, k$,}
 \end{align}
and define the $\mathcal L^n$-measurable sets 
\begin{align*}
\Omega_j^{(i)}=u^{-1}(Q_j^{(i)}) 
\end{align*} 
for $i=1, \ldots, k$. Then, $\bigcup_{i=1}^k \Omega_j^{(i)} = \Omega$.  Let $I_j\subset\{1, \ldots, k\}$ be the index set defined by
\begin{align}\label{>0}
\mathcal{L}^n(\Omega_j^{(i)})> 0 \qquad \text{for $i\in I_j$.}
\end{align} 
Possibly after rearranging, one may assume without loss of generality that $I_j=\{1, \ldots, l\}$ for some $l\in \mathbb{N}$ with $l\leq k$.

Consider the simple function
\begin{align}\label{simpleuk}
u_j(x) = \sum_{i=1}^l \mathbbm{1}_{\Omega^{(i)}_{j}}(x)u(x^{(i)}_j), \qquad x\in \Omega,
\end{align} 
where $x^{(i)}_j$ 
are constructed iteratively as described in the following. 
Setting 
\begin{align*}
M=\{(x,y)\in \Omega\times \Omega: (u(x), u(y))\in E\},
\end{align*} 
we observe that the symmetry and diagonality of $E$ carry over to $M$, that is, if $(x,y)\in M$, then also $(y, x), (x,x), (y,y)\in M$.  With the  notations for sections of $M$, 
let
\begin{align*}
N=\{x\in \Omega: \mathcal{L}^n(\mathfrak M^x) = \mathcal{L}^n(\Omega) \}. 
\end{align*}
Since $(\mathcal{L}^{n}\otimes \mathcal{L}^n)(\Omega\times \Omega)=(\mathcal{L}^{n}\otimes \mathcal{L}^n)(M) = \int_{\Omega} \mathcal{L}^n(\mathfrak M^x) \,dx$ 
and thus, $\mathcal{L}^n(\mathfrak M^x)=\mathcal{L}^n(\Omega)$ for $\mathcal{L}^n$-a.e.~$x\in \Omega$, it follows that 
\begin{align}\label{equal_measures}
\mathcal{L}^n(N) = \mathcal{L}^n(\Omega).
\end{align} 

Now, let $x^{(1)}_j\in \Omega_j^{(1)}\cap N$ (this set is indeed non-empty by~\eqref{equal_measures} and~\eqref{>0}) and iteratively for $i=2, \ldots, l$, 
\begin{align}\label{inclusion2}
x_{j}^{(i)} \in \Omega_{j}^{(i)}\cap N\cap \Bigl(\bigcap_{p=1}^{i-1}\mathfrak M^{x_j^{(p)}}\Bigr). 
\end{align}
Notice that the set on the right-hand side in~\eqref{inclusion2} has positive $\mathcal L^n$-measure and is therefore in particular not empty. 
Indeed, this follows from~\eqref{equal_measures} and~\eqref{>0} in combination with $\mathcal{L}^n\bigl(\bigcap_{p=1}^{i-1}\mathfrak M^{x_j^{(p)}}\bigr)=\mathcal{L}^{n}(\Omega)$ for all $i=2, \ldots, l$. The latter is a consequence of $x_j^{(p)}\in N$ for $p=1, \ldots, i-1$. 
By construction, $u(x_j^{(i)}) \in Q_j^{(i)}$ for $i=1, \ldots, l$, 
and
\begin{align*}
(x_{j}^{(i)}, x_{j}^{(i')})\in M\qquad \text{ for $i,i'=1, \ldots, l$}.
\end{align*}
In view of~\eqref{simpleuk}, it holds therefore that 
 \begin{align*}
 (u_j(x), u_j(y))\in \bigcup_{i, i'\in \{1, \ldots, p\}} \{(u(x_j^{(i)}), u(x_j^{(i')}))\} \subset E \quad \text{for $(\mathcal L^n\otimes \mathcal L^n)$-a.e.~ $(x,y)\in \Omega\times \Omega$,}
 \end{align*}
which implies that $u_j\in \mathcal A_E$ for any $j\in \mathbb N$.
Moreover, together with~\eqref{est123},
\begin{align*}
|u(x) - u_j(x)| <  \frac{1}{j}\qquad\text{ for $\mathcal{L}^n$-a.e.~$x\in \Omega$, }
\end{align*}
so that $u_j\to u$ in $L^\infty(\Omega;\R^m)$ as $j\to \infty$.
This shows that $(u_j)_j$ is an approximating sequence for $u$ with the stated properties.
\end{proof}

The following density statement for $\mathcal A_E$ with a closed set $E$ is an immediate consequence of Lemma~\ref{lem:approx} and Proposition~\ref{lem:Ik=ItildeK}.

\begin{Corollary}\label{cor:approx}
Let $E\subset \R^m\times \R^m$ be closed. 
Then $\mathcal A_E$ coincides with the closure of $\mathcal A_E\cap S^\infty(\Omega;\R^m)$ in $L^\infty(\Omega;\R^m)$.
\end{Corollary}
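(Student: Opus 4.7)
The plan is to establish the two inclusions separately, relying essentially on the two preceding results (Lemma~\ref{lem:approx} and Proposition~\ref{lem:Ik=ItildeK}).

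For the easy inclusion, I first observe that $\mathcal A_E$ is closed under $L^\infty$-convergence when $E$ is closed. Indeed, if $(u_j)_j\subset \mathcal A_E$ converges to some $u$ in $L^\infty(\Omega;\R^m)$, then a subsequence converges pointwise $\mathcal L^n$-a.e.\ on $\Omega$, so that the associated vector fields $v_{u_j}(x,y)=(u_j(x),u_j(y))$ converge pointwise $(\mathcal L^n\otimes \mathcal L^n)$-a.e.\ on $\Omega\times \Omega$ to $v_u$. Since each $v_{u_j}$ takes values in the closed set $E$, so does $v_u$, hence $u\in \mathcal A_E$. In particular, the closure of $\mathcal A_E\cap S^\infty(\Omega;\R^m)$ in $L^\infty(\Omega;\R^m)$ is contained in $\mathcal A_E$.

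For the reverse inclusion, let $u\in \mathcal A_E$ be arbitrary. By Proposition~\ref{lem:Ik=ItildeK}, we have $\mathcal A_E=\mathcal A_{\widehat E}$, so $u\in \mathcal A_{\widehat E}$. Since $E$ is closed, so is $\widehat E$, and by construction $\widehat E$ is symmetric and diagonal. Hence Lemma~\ref{lem:approx} applies to $\widehat E$ and furnishes a sequence $(u_j)_j\subset \mathcal A_{\widehat E}\cap S^\infty(\Omega;\R^m)$ with $u_j\to u$ in $L^\infty(\Omega;\R^m)$. Invoking Proposition~\ref{lem:Ik=ItildeK} once more yields $\mathcal A_{\widehat E}\cap S^\infty(\Omega;\R^m)=\mathcal A_E\cap S^\infty(\Omega;\R^m)$, so $u$ belongs to the claimed closure.

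Since the technical work has been carried out in Lemma~\ref{lem:approx} (simple-function approximation under a symmetric and diagonal constraint) and in Proposition~\ref{lem:Ik=ItildeK} (invariance of $\mathcal A_E$ under the passage $E\mapsto \widehat E$), no genuine obstacle is anticipated. The conceptual content of the corollary is simply that symmetrization and diagonalization provide the correct reduction from a general closed constraint set to one for which explicit approximation is available.
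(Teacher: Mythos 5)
Your proof is correct and follows exactly the route the paper intends: the corollary is stated there as an immediate consequence of Lemma~\ref{lem:approx} and Proposition~\ref{lem:Ik=ItildeK}, and you supply precisely the missing details — the strong $L^\infty$-closedness of $\mathcal A_E$ for closed $E$ for one inclusion, and the reduction $\mathcal A_E=\mathcal A_{\widehat E}$ followed by the simple-function approximation for the symmetric, diagonal, closed set $\widehat E$ for the other.
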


Based on this approximation result and the special properties of simple functions in $\mathcal A_E$, there is another way to represent $\mathcal A_E$, namely in terms of Cartesian products (cf. Definition~\ref{def:Cartesian_subset}).

\begin{Proposition}\label{prop:3}
If $E\subset \R^m\times \R^m$ is closed, then 
\begin{align}\label{repres2}
\mathcal A_E= \bigcup_{P\in \mathcal P_E}\mathcal A_P. 
\end{align}
\end{Proposition}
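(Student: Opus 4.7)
The inclusion ``$\supset$'' is immediate, as $\mathcal{A}_P\subset \mathcal{A}_E$ for every $P\in \mathcal{P}_E$ (because $P\subset E$). For the reverse inclusion, the task is, given $u\in \mathcal{A}_E$, to produce a maximal Cartesian subset $P=A^\ast\times A^\ast\in \mathcal{P}_E$ with $u\in \mathcal{A}_P$. The strategy is threefold: approximate $u$ by simple functions via Corollary~\ref{cor:approx}, read off Cartesian subsets of $E$ from the finitely many values attained at each stage, pass to the limit via Hausdorff convergence, and finally enlarge by Zorn's lemma.

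By Corollary~\ref{cor:approx} (which internally reduces to $E=\widehat{E}$ via Proposition~\ref{lem:Ik=ItildeK}), select $(u_j)_j\subset \mathcal{A}_E\cap S^\infty(\Omega;\R^m)$ with $u_j\to u$ in $L^\infty(\Omega;\R^m)$. Writing $u_j=\sum_{i=1}^{l_j}\mathbbm{1}_{\Omega_j^{(i)}}\xi_j^{(i)}$ with pairwise distinct values $\xi_j^{(i)}$ on pieces $\Omega_j^{(i)}$ of positive measure, set $A_j:=\{\xi_j^{(1)},\ldots,\xi_j^{(l_j)}\}$. Then for any pair of indices $i,i'$, the product $\Omega_j^{(i)}\times \Omega_j^{(i')}$ has positive measure, and since $u_j$ is constant there, the inclusion $u_j\in \mathcal{A}_E$ forces the exact membership $(\xi_j^{(i)},\xi_j^{(i')})\in E$. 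Consequently $A_j\times A_j\subset E$ and $u_j\in \mathcal{A}_{A_j\times A_j}$.

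Let $A\subset \R^m$ denote the essential range of $u$, a compact set. A direct estimate from the definition of the essential range yields $d_H^m(A_j,A)\leq 2\|u_j-u\|_{L^\infty(\Omega;\R^m)}\to 0$. Since $E$ is closed and $A_j\times A_j\subset E$ for every $j$, any $(\alpha,\beta)\in A\times A$ is the limit in $\R^m\times \R^m$ of a sequence in $A_j\times A_j\subset E$, and hence $(\alpha,\beta)\in E$; thus $A\times A\subset E$. Moreover, $u\in \mathcal{A}_{A\times A}$ is immediate from the definition of essential range.

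Finally, the poset $\mathcal{F}:=\{B\subset \R^m: A\subset B,\ B\times B\subset E\}$, ordered by inclusion, admits upper bounds for chains by taking unions (any two points of $\bigcup_\iota B_\iota$ belong to a common $B_\iota$, so the square of the union still lies in $E$). Zorn's lemma provides a maximal $A^\ast\in \mathcal{F}$, and $P:=A^\ast\times A^\ast$ is a maximal Cartesian subset of $E$ containing $A\times A$, giving $u\in\mathcal{A}_P$ as $u(x)\in A\subset A^\ast$ for a.e.\ $x\in \Omega$. The main obstacle is the verification that the values of each simple approximant form an \emph{exact} (not merely almost everywhere) Cartesian subset of $E$; this hinges on the positive-measure property of each piece $\Omega_j^{(i)}$ together with the structural guarantee of Corollary~\ref{cor:approx}. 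Once this is in place, the Hausdorff passage to the limit (using the closedness of $E$) and the Zorn step are routine.
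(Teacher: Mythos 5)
Your proof is correct, and its skeleton coincides with the paper's: approximate $u\in\mathcal A_E$ by simple functions in $\mathcal A_E$ (Corollary~\ref{cor:approx}), observe that the value set $A_j$ of each simple approximant satisfies $A_j\times A_j\subset E$ exactly (because each level set has positive measure), and then pass to the limit to produce a single Cartesian square $A\times A\subset E$ with $u\in\mathcal A_{A\times A}$, which sits inside some maximal $P\in\mathcal P_E$. Where you diverge is the limit passage. The paper first reduces to $E$ compact, invokes the Blaschke selection theorem to extract a subsequence with $d_H^m(A_j,A)\to 0$ for some unspecified compact $A$, and then verifies $u\in\mathcal A_{A\times A}$ via a dominated-convergence estimate on $\int_\Omega\int_\Omega{\rm dist}(v_u,A\times A)\,dx\,dy$. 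You instead identify $A$ explicitly as the essential range of $u$ and prove $d_H^m(A_j,A)\le 2\|u_j-u\|_{L^\infty}$ directly (both one-sided bounds do follow from the definition of essential range together with the positive measure of each $\Omega_j^{(i)}$, so the estimate you assert is valid), after which $A\times A\subset E$ is immediate from closedness and $u\in\mathcal A_{A\times A}$ from the standard fact that $u(x)$ lies in its essential range a.e. This buys you three things: no appeal to Blaschke selection or to a subsequence, no need to reduce to bounded $E$, and an explicit identification of the limit set. You also make explicit the Zorn's lemma argument for the existence of a maximal Cartesian subset containing $A\times A$, which the paper leaves implicit in the proof of Lemma~\ref{lem:Khat_alternative}; your chain argument (any two points of the union of a chain lie in a common member) is the right one. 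The only cosmetic gap is that the inequality $d_H^m(A_j,A)\le 2\|u_j-u\|_{L^\infty}$ is asserted rather than derived, but the derivation is routine and the claim is true.
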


\begin{proof}
For the proof of the nontrivial inclusion, consider any $u\in \mathcal A_E$. We will show that there exists $A\subset \R^m$ with $A\times A\subset E$ such that $u\in \mathcal A_{A\times A}$. Then, $A\times A\subset P$ for some $P\in \mathcal P_E$, and therefore $u\in \mathcal A_P$.

First, we observe that~\eqref{repres2} holds for simple functions. In fact, if $u\in \mathcal S^\infty(\Omega;\R^m)\cap \mathcal A_E$, then it is of the form~\eqref{simple} with $(\xi^{(i)}, \xi^{(i')})\in E$ for all $i, i'=1, \ldots, k$. Here we use in particular that the sets $\Omega^{(i)}$ can be chosen to have positive $\mathcal L^n$-measure. Consequently,
\begin{align*}
v_u(\Omega\times \Omega)= u(\Omega)\times u(\Omega) = \bigcup_{i,i'=1}^k u(\Omega^{(i)}) \times u(\Omega^{(i')}) = \bigcup_{i,i'=1}^k \{(\xi^{(i)},\xi^{(i')})\}\subset E,
\end{align*}
which yields the statement in the case when $u$ is simple.

To prove~\eqref{repres2} in the general case, let $(u_j)_j$ be an approximating sequence resulting from Lemma~\ref{lem:approx}, so that 
\begin{align}\label{approx_4}
u_j\to u \quad \text{in $L^\infty(\Omega;\R^m)$.}
\end{align}
Due to the uniform boundedness of $(u_j)_j$ in $L^\infty(\Omega;\R^m)$, we may assume without loss of generality that $E$ is bounded, and hence compact. 
Since each $u_j$ is simple, one can thus find for every $j\in \mathbb N$ a compact set $A_j\subset \R^m$ with $P_j:=A_j\times A_j\subset E$ such that $u_j\in \mathcal A_{P_j}$. 

Next, we exploit the fact that the metric space of closed subsets of a compact set in $\R^m$ endowed with the Hausdorff distance $d^m_H$ in \eqref{Hausdorff} is compact, see~e.g.~\cite{Rog70} or \cite[Theorem~6.1]{AFP00} for Blaschke selection theorem.  
Hence, there is a subsequence of $(A_j)_j$ (not relabelled) and $A\subset \R^m$ compact 
such that $d_H^m(A_j, A)\to 0$ as $j\to \infty$.
In light of the relation 
\begin{align*}
d_H^{2m}(B\times B, D\times D) \leq 2\, d_H^m(B, D)
\end{align*}
for non-empty sets $B, D\subset \R^m$, this implies  that
\begin{align}\label{eq76}
d_H^{2m}(P_j, A\times A)=d_H^{2m}(A_j\times A_j, A\times A) \to 0\quad \text{as $j\to \infty$,}
\end{align}
and since $P_j\subset E$ for all $j\in \mathbb{N}$, it follows that $A\times A\subset E$.

Moreover, by~\eqref{approx_4} in combination with dominated convergence and~\eqref{eq76}, 
\begin{align*}
\int_\Omega\int_\Omega {\rm dist}(v_u, A\times A) \, dx\, dy & = \lim_{j\to \infty} \int_\Omega\int_\Omega {\rm dist}(v_{u_j}, A\times A) \, dx\, dy \\ &\leq \lim_{j\to \infty} \int_\Omega\int_\Omega {\rm dist}(v_{u_j}, P_j) \, dx\, dy + \lim_{j\to \infty} d_H^{2m}(P_j, A\times A) \mathcal L^n(\Omega)^2= 0.
\end{align*}
Hence, $v_u\in A\times A$ 
a.e.~in $\Omega\times \Omega$ or $u\in \mathcal A_{A\times A}$, which finishes the proof. 
\end{proof}
\begin{Remark}
Note that Proposition~\ref{prop:3} fails if $E$ is not closed. For the example in Remark~\ref{rem:AK=AKtilde}, it holds that $\mathcal{P}_E=\emptyset$, whereas $\mathcal{A}_E\neq \emptyset$.  
\end{Remark}

\subsection{Asymptotic analysis of sequences in \boldmath{$\mathcal A_K$}}\label{subsec:asymptotics}
For a compact set $K\subset \R^m\times \R^m$, in view of Remark \ref{rem1.2}\;a), we denote the $L^\infty$-weak$^\ast$ closure of $\mathcal A_K$ by $\mathcal A_K^\infty$, that is, 
\begin{align}\label{AinftyE}
\mathcal{A}_{K}^\infty := \{u\in L^\infty(\Omega;\R^m): u_j\weaklystar u \text{ in $L^\infty(\Omega;\R^m)$, } (u_j)_j\subset \mathcal{A}_K\}.
\end{align}
This section contains the proof of 
Theorem~\ref{theo:weakclosure_intro}, which 
can be reformulated in terms of~\eqref{AinftyE} as \begin{align}\label{theo:short}
\mathcal{A}_K^\infty=\mathcal{A}_{\widehat K^{\rm sc}}.
\end{align} 

We start with an auxiliary result showing that~the implication $\mathcal A_{\widehat K^{\rm sc}}\subset \mathcal A_{K}^\infty$ is true whenever $K$ consists of the vertices of a symmetric cube in $\R^m\times \R^m$. 

\begin{Lemma}\label{lem:1}
Let $\alpha, \beta\in \R^m$ and $K =\{\alpha, \beta\}\times \{\alpha, \beta\}$. Then 
\begin{align*}
 \mathcal{A}_{Q_{\alpha, \beta}} \subset \mathcal{A}_K^\infty,
\end{align*} 
recalling that $Q_{\alpha, \beta}=[\alpha, \beta]\times [\alpha, \beta]$, cf.~\eqref{cubesQalphabeta}. 
\end{Lemma}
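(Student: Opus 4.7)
The plan is to establish the inclusion by an explicit oscillating construction. I first observe, via Fubini's theorem, that the hypothesis $u\in\mathcal{A}_{Q_{\alpha,\beta}}$ is equivalent to $u(x)\in[\alpha,\beta]$ for a.e.~$x\in\Omega$. The case $\alpha=\beta$ is trivial, so I assume $\alpha\neq\beta$; then $u$ admits a unique representation
$$u(x)=t(x)\alpha+(1-t(x))\beta,\qquad x\in\Omega,$$
with $t\in L^\infty(\Omega;[0,1])$ measurable.

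For each $j\in\mathbb{N}$ I partition $\Omega$ (up to a null set) into finitely many pairwise disjoint cubes $\{Q_j^k\}_{k=1}^{N_j}$ of side length $1/j$, compute the local averages $\bar t_j^k:=\dashint_{Q_j^k} t\,dx\in[0,1]$, and select an arbitrary measurable subset $E_j^k\subset Q_j^k$ with $\mathcal L^n(E_j^k)=\bar t_j^k\,\mathcal L^n(Q_j^k)$. Setting $E_j:=\bigcup_k E_j^k$, I define the two-valued function
$$u_j:=\alpha\,\mathbbm{1}_{E_j}+\beta\,\mathbbm{1}_{\Omega\setminus E_j}\in L^\infty(\Omega;\{\alpha,\beta\}).$$
Since $u_j$ takes values only in $\{\alpha,\beta\}$, the nonlocal constraint $(u_j(x),u_j(y))\in\{\alpha,\beta\}\times\{\alpha,\beta\}=K$ holds for every $(x,y)\in\Omega\times\Omega$, whence $u_j\in\mathcal A_K$ for each $j$.

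The remaining task is to verify that $u_j\weaklystar u$ in $L^\infty(\Omega;\R^m)$, which, after subtracting the constant $\beta$ and dividing by $\alpha-\beta$ componentwise, amounts to showing $\mathbbm{1}_{E_j}\weaklystar t$ in $L^\infty(\Omega)$. Testing against $\varphi\in L^1(\Omega)$, I exploit the equal-integral property $\int_{Q_j^k}\mathbbm{1}_{E_j}\,dx=\int_{Q_j^k} t\,dx$ to deduce $\int_\Omega\varphi(\mathbbm{1}_{E_j}-t)\,dx\to 0$ first for $\varphi$ constant on each $Q_j^k$, and then for every $\varphi\in L^1(\Omega)$ by combining the uniform bound $\|\mathbbm{1}_{E_j}-t\|_{L^\infty(\Omega)}\leq 1$ with the density in $L^1(\Omega)$ of functions piecewise constant on increasingly fine cube partitions.

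I do not expect a genuine obstacle here: the assertion is essentially the classical fact that the weak$^\ast$ closure of $L^\infty(\Omega;A)$ coincides with $L^\infty(\Omega;A^{\rm co})$, specialised to the two-point set $A=\{\alpha,\beta\}$, the point being that the Cartesian structure $K^{\rm co}=Q_{\alpha,\beta}$ collapses the nonlocal constraint into a purely local pointwise one. Should a more abstract route be preferred, one could equivalently invoke Theorem~\ref{FTYM}: the sequence $(u_j)_j$ generates the homogeneous-in-each-cube Young measure $\nu_x=t(x)\delta_\alpha+(1-t(x))\delta_\beta$ whose barycenter is $u(x)$, which directly yields the required weak$^\ast$ convergence.
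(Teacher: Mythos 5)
Your proof is correct, but it takes a genuinely different route from the paper's. The paper proceeds in two stages: it first treats simple functions $u=\sum_i\mathbbm{1}_{\Omega^{(i)}}\xi^{(i)}$ with $\xi^{(i)}\in[\alpha,\beta]$ by superposing $]0,1[^n$-periodic $\{\alpha,\beta\}$-valued oscillations $h^{(i)}(j\,\cdot)$ on each piece (Riemann--Lebesgue lemma), and then passes to general $u\in\mathcal A_{Q_{\alpha,\beta}}$ via the uniform approximation of Lemma~\ref{lem:approx} combined with Attouch's diagonalization. You instead exploit from the outset that the Cartesian structure of $Q_{\alpha,\beta}$ collapses the nonlocal constraint to the local one $u(x)\in[\alpha,\beta]$ a.e., write $u=t\alpha+(1-t)\beta$, and build a single two-valued sequence by matching the averages of $t$ on a partition of mesh $1/j$; this handles general $u$ in one step and avoids both Lemma~\ref{lem:approx} and the diagonalization argument, at the cost of having to carry out the standard density/averaging argument for the weak$^\ast$ convergence $\mathbbm{1}_{E_j}\weaklystar t$ by hand. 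Two small points deserve tightening: since $\Omega$ is an arbitrary bounded open set, the ``cubes'' of your partition should be taken as intersections $Q\cap\Omega$ of grid cubes with $\Omega$ (nothing in your construction actually requires the pieces to be genuine cubes, only that their diameters shrink); and in the final density step the phrase ``piecewise constant on increasingly fine cube partitions'' should be made precise, e.g.\ by using nested dyadic partitions, or by estimating $\bigl|\int_\Omega\varphi(\mathbbm{1}_{E_j}-t)\,dx\bigr|\le\|\varphi-\varphi_j\|_{L^1(\Omega)}$ with $\varphi_j$ the piecewise average of $\varphi$ on the $j$-th partition and invoking $\|\varphi-\varphi_j\|_{L^1(\Omega)}\to0$. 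Both fixes are routine, and your concluding remark that the statement is the classical relaxation of a two-point pointwise constraint is exactly the right way to see why the lemma holds.
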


\begin{proof} 
Suppose first that $u\in \mathcal A_{Q_{\alpha, \beta}}\cap S^\infty(\Omega;\R^m)$ and 
let $u$ as in~\eqref{simple} with $\mathcal L^n(\Omega^{(i)})>0$ for $i=1, \ldots, k$. 
Then, $\xi^{(i)}\in [\alpha, \beta]\subset \R^m$ for all $i=1, \ldots, k$, and there are $\lambda_i\in [0,1]$ such that $\xi^{(i)}=\lambda_i\alpha + (1-\lambda_i)\beta$. Moreover, let $Y_{\xi^{(i)}}\subset\, ]0,1[^n$ be measurable with $\mathcal L^n(Y_{\xi^{(i)}}) =\lambda_i$ and define $h^{(i)}$ as the $]0,1[^n$-periodic function given by 
\begin{align*}
h^{(i)}(y)=\begin{cases} \alpha & \text{for $y\in Y_{\xi^{(i)}}$,}\\ \beta & \text{for $]0,1[^n\setminus Y_{\xi^{(i)}}$,}
\end{cases} \qquad y\in ]0,1[^n.
\end{align*}
Setting
\begin{align*}
u_j(x) = \sum_{i=1}^k h^{(i)}(jx)\mathbbm{1}_{\Omega^{(i)}}(x)
\end{align*}
for $x\in \Omega$ and $j\in \mathbb N$, leads to $u_j\weaklystar u$ in $L^\infty(\Omega;\R^m)$ according to the Riemann-Lebesgue lemma on weak convergence of periodically oscillating sequences.  By construction, $(u_j(x), u_j(y))\in  \{\alpha, \beta\}\times \{\alpha, \beta\}=K$ for all $(x, y)\in \Omega\times \Omega$, so that $u_j\in \mathcal A_K$ for every $j\in \mathbb N$.  

For general functions $u\in \mathcal A_{Q_{\alpha, \beta}}$, we argue via approximation.
Let  $(\tilde{u}_k)_k\subset \mathcal A_{Q_{\alpha, \beta}}\cap S^\infty(\Omega;\R^m)$ be a sequence of simple functions such that $\tilde{u}_k\to u$ in $L^\infty(\Omega;\R^m)$ as $k\to \infty$, see Lemma~\ref{lem:approx}.The previous construction allows us to find for each $k\in \mathbb N$ a sequence $(\tilde u_{k,j})_j\subset \mathcal A_K$ with $\tilde u_{k,j}\weaklystar \tilde{u}_k$ in $L^\infty(\Omega;\R^m)$ as $j\to \infty$. By a version of Attouch's diagonalization lemma \cite[Lemma~1.15, Corollary~1.16]{Att84} (exploiting in particular that $L^\infty(\Omega;\R^m)$ is the dual of a separable space), we can select $k(j)\to \infty$ as $j\to \infty$ such that for $u_j:=\tilde u_{k(j),j}\in \mathcal A_K$,
\begin{align*}
u_j\weaklystar u \quad \text{in $L^\infty(\Omega;\R^m)$.}
\end{align*} 
This shows that $u\in \mathcal A_K^\infty$ and completes the proof.
\end{proof}

\begin{proof}[Proo\textbf{}f of Theorem~\ref{theo:weakclosure_intro}]
We prove separately the two inclusions that make up~\eqref{theo:short}. 

	First, let $u\in \mathcal A_K^\infty$. Then, in view of Proposition~\ref{lem:Ik=ItildeK}, there exists a sequence $(u_j)_j\subset L^\infty(\Omega;\R^m)$ with $v_{u_j} \in \widehat{K}$ a.e.~in $\Omega\times \Omega$ such that $u_j\weaklystar u$ in $L^\infty(\Omega;\R^m)$. Moreover, let $\{\nu_x\otimes \nu_y\}_{(x, y)\in \Omega\times \Omega}$ be the Young measure generated by $(v_{u_j})_j$, cf.~Lemma~\ref{asprop2.3Pedregal}. Since $K$, and hence also $\widehat K$, is compact, so is $\widehat K^{\rm sc}$ in the case $m=1$ according to Remark~\ref{rem:Esc_compact}. For $m>1$, the compactness of $\widehat K^{\rm sc}$ is guaranteed directly by assumption. 
As a result, the map 
\begin{align*}
	\R^m\times \R^m\to [0, \infty),\quad (\xi, \zeta)\mapsto {\rm dist}^2((\xi, \zeta), \widehat K)
	\end{align*}
	 is lower semicontinuous, and we infer from Theorem \ref{FTYM} that 
	\begin{align*}
	0 & = \lim_{j\to \infty}\int_{\Omega}\int_{\Omega} {\rm dist}^2\big(v_{u_j}, \widehat K\big) \,dx\, dy \geq
\lim_{j\to \infty}\int_{\Omega}\int_\Omega\int_{\mathbb R^m}\int_{\mathbb R^m} {\rm dist}^2((\xi,\zeta), \widehat K)\, d \nu_x(\xi)\otimes\nu_y(\zeta)\,dx\, d y 
\geq 0. 
	\end{align*}	
	Hence, $\nu_x\otimes \nu_y$ is supported in $\widehat K\subset \widehat K^{\rm sc}$ for a.e.~$(x,y)\in \Omega\times \Omega$. 
	By Lemma \ref{lem:sepJensen} applied with $W=\chi_{\widehat K^{\rm sc}}$, it follows then that $(u(x),u(y))=([\nu_x], [\nu_y])\in \widehat K^{\rm sc}$ for a.e.~$(x,y)\in \Omega \times \Omega$, and
	thus, $u\in \mathcal A_{\widehat K^{\rm sc}}$. 

To prove the reverse inclusion, recall that the second assumption on $\widehat{K}^{\rm sc}$ in the case $m>1$ 
says that
\begin{align}\label{ass76}
\widehat{{\widehat K}^{\rm sc}}=\bigcup_{(\alpha, \beta)\in \widehat K}Q_{\alpha, \beta} \color{olive} \quad \text{with $\mathcal{P}_{\widehat{K}^{\rm sc}}\subset \{Q_{\alpha, \beta}:(\alpha,\beta)\in \widehat K\}$.}
\end{align}

Now, we combine Lemma~\ref{lem:square} if $m=1$, or the previous assumption~\eqref{ass76} if $m>1$, with Proposition~\ref{prop:3} and Lemma~\ref{lem:1} to infer that 
\begin{align}\label{finaleq}
\mathcal A_{\widehat K^{\rm sc}} = 
\color{olive} \mathcal{A}_{\bigcup_{P\in \mathcal P_{ \widehat K^{\rm sc}}}P} = \bigcup_{P\in \mathcal P_{\widehat K^{\rm sc}}} \mathcal{A}_P \subset \color{black} \bigcup_{(\alpha, \beta)\in \widehat K} \mathcal A_{Q_{\alpha, \beta}} \color{black} \subset \bigcup_{(\alpha, \beta)\in \widehat K} \mathcal A^\infty_{\{\alpha, \beta\}\times \{\alpha, \beta\}} \subset \mathcal A_K^\infty.
\end{align}
This finishes the proof.
 \end{proof}

\begin{Remark}
	\label{observation}
a) 	If $m=1$, one could replace $\widehat{K}$ in the second, third and fourth term in~\eqref{finaleq} by $\widehat{K}_{\rm dex}$, 
	simply using Lemma~\ref{lem:square} instead of Corollary~\ref{cor:square}, and taking into account that ${\widehat K}_{\rm dex}\subset \widehat K$ by Remark~\ref{rem:extremepoints}.
	
	b) For examples of sets satisfying~\eqref{ass76} see Remarks \ref{rem:hat1}\,b) and \ref{rem:failure}\,c). 
 \end{Remark}

The following result is an immediate consequence of Theorem~\ref{theo:weakclosure_intro} in conjunction with Proposition \ref{lem:Ik=ItildeK} and Remark~\ref{rem:failure}\,a), cf. also Remark \ref{rem1.2}\,a).

\begin{Corollary}\label{cor:Khatsc}
Let $K$ as in Theorem~\ref{theo:weakclosure_intro}. Then 
$\mathcal{A}_K$ is $L^\infty$-weakly$^\ast$ 
closed if and only if 
\begin{align}\label{con1}
\widehat{\widehat{K}^{\rm sc}} = \widehat K.
\end{align}
For $m=1$, the condition~\eqref{con1} is equivalent with the separate level convexity of $\widehat K$. 
\end{Corollary}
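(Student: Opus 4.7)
The plan is to obtain the corollary as an essentially immediate consequence of the two main results already established, namely Theorem~\ref{theo:weakclosure_intro} and Proposition~\ref{lem:Ik=ItildeK}. By Theorem~\ref{theo:weakclosure_intro} (equivalently~\eqref{theo:short}), the $L^\infty$-weak$^\ast$ closure of $\mathcal{A}_K$ is $\mathcal{A}_{\widehat{K}^{\rm sc}}$; thus $\mathcal{A}_K$ is weakly$^\ast$ closed precisely when $\mathcal{A}_K=\mathcal{A}_{\widehat K^{\rm sc}}$. I would then verify that the hypotheses of Proposition~\ref{lem:Ik=ItildeK} are met (both $K$ and $\widehat K^{\rm sc}$ need to be closed: $K$ is compact by assumption, $\widehat K^{\rm sc}$ is compact for $m=1$ by Remark~\ref{rem:Esc_compact}, and for $m>1$ by the standing hypothesis in the theorem). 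Applying that proposition with $E=K$ and $F=\widehat K^{\rm sc}$ converts the identity $\mathcal{A}_K=\mathcal{A}_{\widehat K^{\rm sc}}$ into the equivalent set-theoretic condition $\widehat K=\widehat{\widehat K^{\rm sc}}$, i.e.,~\eqref{con1}.

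For the scalar add-on ($m=1$), I would argue the two implications separately. If $\widehat K$ is separately convex, then $\widehat K^{\rm sc}=\widehat K$; since $\widehat K$ is already symmetric and diagonal (the operations are idempotent by the definition~\eqref{hatK}), applying $\widehat{\cdot}$ once more gives $\widehat{\widehat K^{\rm sc}}=\widehat K$, i.e.,~\eqref{con1}. For the converse, assuming~\eqref{con1}, the key ingredient is Remark~\ref{rem:failure}\,a) (resting on Lemma~\ref{lem:square}), which says that in the scalar case the separately convex hull of a symmetric and diagonal set is itself symmetric and diagonal. Applied to $\widehat K$, this means $\widehat K^{\rm sc}$ is symmetric and diagonal, hence $\widehat{\widehat K^{\rm sc}}=\widehat K^{\rm sc}$. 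Combining this with~\eqref{con1} gives $\widehat K^{\rm sc}=\widehat K$, which is exactly separate convexity of $\widehat K$.

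There is no real obstacle in this proof; all the heavy lifting has been done. The only delicate point is to double-check that the closedness/compactness hypothesis needed for Proposition~\ref{lem:Ik=ItildeK} is satisfied in both the scalar and vectorial cases, which is why I would explicitly invoke Remark~\ref{rem:Esc_compact} for $m=1$ and the standing assumption of Theorem~\ref{theo:weakclosure_intro} for $m>1$. The terminology "separate level convexity of $\widehat K$" I would simply interpret as separate convexity of $\widehat K$ as a subset of $\R\times\R$, consistent with Definition~\ref{separateconvexitysets}, since the level sets of the characteristic function $\chi_{\widehat K}$ in the sense of~\eqref{indicator} are either empty, equal to $\widehat K$, or all of $\R^2$.
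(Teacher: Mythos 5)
Your proposal is correct and follows exactly the route the paper indicates for this corollary: combine Theorem~\ref{theo:weakclosure_intro} (the weak$^\ast$ closure of $\mathcal A_K$ is $\mathcal A_{\widehat K^{\rm sc}}$) with Proposition~\ref{lem:Ik=ItildeK} to translate $\mathcal A_K=\mathcal A_{\widehat K^{\rm sc}}$ into $\widehat K=\widehat{\widehat K^{\rm sc}}$, and use Remark~\ref{rem:failure}\,a) (via Lemma~\ref{lem:square}) for the scalar equivalence with separate convexity of $\widehat K$. Your explicit verification of the closedness hypotheses and your reading of ``separate level convexity of $\widehat K$'' as separate convexity of the set are both consistent with the paper.
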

%


\subsection{Characterization of Young measures generated by sequences in \boldmath{$\mathcal A_K$}}\label{subsec:YM_characterization}

For $K\subset \R^m\times \R^m$ compact, let $\mathcal Y_K^\infty$ be the set of Young measures generated by a sequence of nonlocal vector fields associated with $(u_j)_j\subset \mathcal A_K$; more precisely, 
\begin{align}
\begin{array}{l}\label{def_YKinfty}
\mathcal Y_K^\infty :=\{\Lambda\in L_w^\infty(\Omega\times \Omega;\mathcal Pr(\R^m\times \R^m)): v_{u_j} \stackrel{YM}{\longrightarrow} \Lambda \text{ with $(u_j)_j\subset \mathcal A_K$}\}.
\end{array}
\end{align}
Regarding barycenters, we observe that
\begin{align}\label{barycenter}
\{[\Lambda]=\langle \Lambda, {\rm id} \rangle: \Lambda\in \mathcal Y_K^\infty\}=\{v_u: u\in \mathcal A_K^\infty\}\subset L^\infty(\Omega\times \Omega;\R^m\times \R^m).
\end{align}

As a consequence of 
Proposition~\ref{lem:Ik=ItildeK}, 
Lemma~\ref{asprop2.3Pedregal} and Theorem~\ref{FTYM}\,(iii), 
\begin{align}\label{YM_inclusion}
\mathcal Y_K^\infty = \mathcal Y_{\widehat K}^\infty \subset \widetilde{\mathcal Y}^\infty_{\widehat K} =\mathcal Y_{\widehat K}, 
\end{align}
where for any compact $C\subset \R^m\times \R^m$,
\begin{align*}
\begin{array}{l}
\mathcal Y_C  := \{\Lambda \in L_w^\infty(\Omega\times \Omega; \mathcal Pr(\R^m\times \R^m)): \Lambda_{(x,y)}=\nu_x\otimes \nu_y \text{ with $\nu\in L_w^\infty(\Omega;\mathcal Pr(\R^m))$ and }  \\[0.2cm] 
\hspace{6.7cm} {\rm supp\,} \Lambda_{(x,y)} \subset C \text{ for a.e.~$(x,y)\in \Omega\times\Omega$}\},
\end{array}
\end{align*} 
and 
$\widetilde{\mathcal Y}_{C}^\infty$ is a modification of $\mathcal Y_C^\infty$ in the sense that the exact inclusion is weakened to an approximate version, i.e.,
\begin{align*}
\begin{array}{l}
\widetilde{\mathcal Y}_{C}^\infty  :=\{\Lambda\in L_w^\infty(\Omega\times \Omega;\mathcal Pr(\R^m\times \R^m)):  v_{u_j} \stackrel{YM}{\longrightarrow} \Lambda \text{ with $(u_j)_j\subset L^\infty(\Omega;\R^m)$ such that} \\[0.2cm] \hspace{6.8cm}\text{${\rm dist}(v_{u_j}, C)\to 0$ in measure as $j\to \infty$}\}.
\end{array}
\end{align*}

In the simple special case, when $K$ has the form of a Cartesian product (then clearly, $K=\widehat K$), we are able to show that equality holds in~\eqref{YM_inclusion}. The proof combines well-known results from the theory of Young measures with a projection argument. 
Note that for more general $K$ the projection result fails due to non-trivial interactions between the different variables.

\begin{Proposition}\label{prop:YMchar_Cartesian}
Let $K\subset \R^m\times \R^m$ such that $K=A\times A$ with $A\subset \R^m$ compact.
Then, 
\begin{align*}
\mathcal Y_K^\infty = \mathcal Y_K.
\end{align*}
\end{Proposition}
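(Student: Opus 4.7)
The one inclusion $\mathcal Y_K^\infty \subset \mathcal Y_K$ is already available from the discussion preceding the statement: indeed, $K=\widehat K$ holds trivially for a Cartesian product, so~\eqref{YM_inclusion} combined with Theorem~\ref{FTYM}(iii) gives this direction immediately, since any $(u_j)_j\subset \mathcal A_K$ is uniformly bounded (by compactness of $A$) and satisfies $v_{u_j}\in K$ almost everywhere, so Lemma~\ref{asprop2.3Pedregal} forces the generated Young measure to have the product structure with support in $K$.

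For the non-trivial direction $\mathcal Y_K\subset \mathcal Y_K^\infty$, the plan is to produce an approximating sequence by a projection argument that critically exploits the Cartesian structure. Take $\Lambda\in \mathcal Y_K$, so $\Lambda_{(x,y)}=\nu_x\otimes \nu_y$ with $\nu=\{\nu_x\}_{x\in \Omega}\in L^\infty_w(\Omega;\mathcal Pr(\R^m))$ such that ${\rm supp\,}\nu_x \subset A$ for a.e.~$x\in \Omega$. By a standard result from Young measure theory (see e.g.~\cite[Theorem 8.16]{FoL07}), there exists a uniformly bounded sequence $(u_j)_j\subset L^\infty(\Omega;\R^m)$ with $u_j\stackrel{YM}{\longrightarrow}\nu$, and from Theorem~\ref{FTYM}(iii) we additionally get ${\rm dist}(u_j, A)\to 0$ in measure.

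The issue is that $u_j$ need not take values in $A$, so $u_j\notin \mathcal A_K$ in general. Here the Cartesian structure of $K$ enters: since $A\subset \R^m$ is compact, there is a Borel measurable nearest-point projection $\pi:\R^m\to A$, and setting $\tilde u_j := \pi\circ u_j$ yields $\tilde u_j(x)\in A$ for every $x\in \Omega$, so that $v_{\tilde u_j}(x,y)=(\tilde u_j(x),\tilde u_j(y))\in A\times A=K$, i.e., $\tilde u_j\in \mathcal A_K$. Moreover,
\begin{equation*}
|\tilde u_j(x)-u_j(x)|={\rm dist}(u_j(x),A)\to 0\quad \text{in measure,}
\end{equation*}
so $(\tilde u_j)_j$ and $(u_j)_j$ generate the same Young measure (cf.~\cite[Lemma 6.3]{Pen97a}), hence $\tilde u_j\stackrel{YM}{\longrightarrow}\nu$. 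Applying Lemma~\ref{asprop2.3Pedregal} to the uniformly bounded sequence $(\tilde u_j)_j\subset \mathcal A_K$ yields $v_{\tilde u_j}\stackrel{YM}{\longrightarrow} \nu_x\otimes \nu_y=\Lambda$, so $\Lambda\in \mathcal Y_K^\infty$.

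The main (and essentially only) subtlety is the projection step: this is exactly where the Cartesian hypothesis on $K$ matters, since pointwise projection of $u_j(x)$ onto $A$ automatically keeps the pair $(u_j(x),u_j(y))$ inside $A\times A=K$, whereas for a general compact $K\subset \R^m\times \R^m$ such a decoupled projection procedure would not respect the nonlocal constraint $v_{u_j}\in K$. Apart from that, every step reduces to invoking Lemma~\ref{asprop2.3Pedregal}, Theorem~\ref{FTYM}, and standard facts about equality in measure not affecting generated Young measures.
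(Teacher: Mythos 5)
Your proof is correct and follows essentially the same route as the paper: the easy inclusion comes from \eqref{YM_inclusion}, and the hard one is obtained by generating $\nu$ with a uniformly bounded sequence and then composing with a measurable nearest-point projection onto $A$, which enforces the exact constraint without altering the generated Young measure precisely because $K$ is Cartesian. The paper merely phrases this projection step in terms of the auxiliary class $\widetilde{\mathcal Y}_K^\infty$ (whose generating sequences satisfy ${\rm dist}(v_{\tilde u_j},K)\to 0$ in measure) instead of invoking the generation theorem for $\nu$ directly, but the underlying argument is the same.
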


\begin{proof}
In view of~\eqref{YM_inclusion}, it remains to show that $\widetilde{\mathcal Y}_{K}^\infty \subset \mathcal Y_K^\infty$. To this end, we project the sequences generating the Young measures in $\widetilde{\mathcal Y}_{K}^\infty$ onto $K$. 

Let $\Lambda\in \widetilde{\mathcal Y}_{K}^\infty$ be generated by $(v_{\tilde u_j})_j$ with $(\tilde u_j)_j\subset L^\infty(\Omega;\R^m)$ such that ${\rm dist}(v_{\tilde u_j}, K) = {\rm dist}(v_{\tilde u_j}, A\times A)\to 0$ in measure as $j\to \infty$. 
By measurable selection~\cite[Section~6.1.1, Theorem~6.10]{FoL07}, one can find a measurable and essentially bounded function $u_j:\Omega \to \R^m$ with 
\begin{align*}
u_j(x) \in {\rm argmin\;}_{\xi\in A} {\rm dist}(\tilde u_j(x), \xi) \quad \text{for a.e.~$x\in \Omega$.}
\end{align*}
Then by construction, $v_{u_j}\in A\times A=K$ a.e.~in $\Omega\times \Omega$, and $v_{u_j}-v_{\tilde u_j}\to 0$ in measure as $j\to \infty$. The latter implies in particular that $(v_{u_j})_j$ generates the same Young measure as $(v_{\tilde u_j})_j$, namely $\Lambda$. Hence, $\Lambda\in \mathcal Y_K^\infty$. 
\end{proof}

With these prerequisites at hand, we can derive the following characterization of Young measures generated by sequences with nonlocal constraints.

\begin{Theorem}\label{theo:YMcharacterization}
Let $K\subset \R^m\times \R^m$ be compact. Then $\mathcal Y_K^\infty = \bigcup_{P\in \mathcal P_{\widehat K}} \mathcal Y_P$. 
\end{Theorem}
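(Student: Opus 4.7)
Plan: The easier inclusion $\bigcup_{P\in \mathcal P_{\widehat K}}\mathcal Y_P \subset \mathcal Y_K^\infty$ is a direct consequence of the machinery already at our disposal. Fix $P=A\times A\in \mathcal P_{\widehat K}$; by maximality of $P$ inside the closed set $\widehat K$, the section $A$ must itself be closed, and since $A\subset \pi(\widehat K)$ is bounded, $A$ is compact. Proposition~\ref{prop:YMchar_Cartesian} then yields $\mathcal Y_P=\mathcal Y_P^\infty$, so every $\Lambda\in \mathcal Y_P$ is generated by $(v_{u_j})_j$ with $(u_j)_j\subset \mathcal A_P\subset \mathcal A_{\widehat K}=\mathcal A_K$, the last identity being Proposition~\ref{lem:Ik=ItildeK}. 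Hence $\Lambda\in \mathcal Y_K^\infty$.

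For the reverse inclusion, take $\Lambda\in \mathcal Y_K^\infty$ generated by $(v_{u_j})_j$ with $(u_j)_j\subset \mathcal A_K=\mathcal A_{\widehat K}$. Applying Proposition~\ref{prop:3} to the closed set $\widehat K$, one can assign to each $j$ a maximal Cartesian subset $P_j=A_j\times A_j\in \mathcal P_{\widehat K}$ with $u_j\in \mathcal A_{P_j}$. Since every $A_j$ is contained in the compact set $\pi(\widehat K)\subset \R^m$, Blaschke's selection theorem (as already used in the proof of Proposition~\ref{prop:3}) produces a non-relabelled subsequence together with a compact $A\subset \R^m$ such that $d_H^m(A_j,A)\to 0$, and consequently $d_H^{2m}(P_j,P)\to 0$ with $P:=A\times A$. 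Closedness of $\widehat K$ gives $P\subset \widehat K$, and by Zorn's lemma $P$ is contained in some maximal $Q\in \mathcal P_{\widehat K}$.

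The remaining step is to verify $\Lambda\in \mathcal Y_P$, which yields $\Lambda\in \mathcal Y_Q\subset \bigcup_{P\in \mathcal P_{\widehat K}}\mathcal Y_P$ as desired. Lemma~\ref{asprop2.3Pedregal} supplies the product form $\Lambda_{(x,y)}=\nu_x\otimes \nu_y$ for a.e.~$(x,y)\in \Omega\times \Omega$, where $\nu=\{\nu_x\}_{x\in \Omega}$ is the Young measure generated by the (necessarily uniformly bounded) sequence $(u_j)_j$. On the other hand, since $v_{u_j}\in P_j$ a.e.~and $d_H^{2m}(P_j,P)\to 0$, we have ${\rm dist}(v_{u_j},P)\to 0$ uniformly and in particular in measure, so Theorem~\ref{FTYM}\,(iii) forces ${\rm supp\,}\Lambda_{(x,y)}\subset P$ for a.e.~$(x,y)\in \Omega\times \Omega$. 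Combining these two facts gives $\Lambda\in \mathcal Y_P$.

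The main (though mild) obstacle is bridging the index-dependent Cartesian data $(P_j)$ with a single Cartesian limit $P$: Proposition~\ref{prop:3} only delivers $P_j$ per step, and it is the Hausdorff-type compactness via Blaschke selection that promotes this to a Cartesian limit inside $\widehat K$. The Cartesian shape of the limit is precisely what allows Proposition~\ref{prop:YMchar_Cartesian} to be invoked in the easy direction, and what makes the limiting product-form Young measure recognisable as an element of $\bigcup_{P\in \mathcal P_{\widehat K}}\mathcal Y_P$; the rest is careful bookkeeping of supports and product structures.
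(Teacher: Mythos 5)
Your proposal is correct and follows essentially the same route as the paper: the easy inclusion via Proposition~\ref{prop:YMchar_Cartesian}, and the reverse inclusion by extracting Cartesian sets $P_j=A_j\times A_j$ from Proposition~\ref{prop:3}, passing to a Hausdorff limit $A\times A\subset\widehat K$ by Blaschke selection, and concluding with Theorem~\ref{FTYM}\,(iii) and the product structure from Lemma~\ref{asprop2.3Pedregal}. You merely make explicit a few details the paper leaves implicit (compactness of the sections $A$ of maximal Cartesian subsets, and the enlargement of $A\times A$ to a maximal element of $\mathcal P_{\widehat K}$).
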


\begin{proof} 
Owing to the fact that any set in $\mathcal P_K$ is a subset of $K$ with the form of a Cartesian product in $\R^m\times \R^m$, the inclusion $ \bigcup_{P\in \mathcal P_{\widehat K}} \mathcal Y_P \subset \mathcal Y_K^\infty$ follows immediately from Proposition~\ref{prop:YMchar_Cartesian}. 

For the proof of reverse inclusion, consider $(v_{u_j})_j$ as in~\eqref{def_YKinfty}, generating the Young measure $\Lambda\in \mathcal Y_K^\infty$.
Then, Proposition~\ref{prop:3} implies for every $j\in \mathbb N$ the existence of  \color{olive} $A_j\subset \R^m$ \color{black} compact such that 
\begin{align*}
v_{u_j}\in P_j :=A_j\times A_j\subset \mathcal P_K = \mathcal P_{\widehat K} \quad \text{ a.e.~in $\Omega\times \Omega$.} 
\end{align*}
Arguing similarly to Proposition~\ref{prop:3},  we conclude (possibly after passing to a non-relabelled subsequence of $(A_j)_j$) that $d_H^m(A_j, A)\to 0$ as $j\to \infty$ for some $A\subset \R^m$ compact with the property that $A\times A \subset \widehat K$. 
It follows then in view of
 \begin{align*}
 {\rm dist}(v_{u_j}, A\times A)\leq {\rm dist}(v_{u_j}, P_j) + d_H^{2m}(P_j, A\times A) = d_H^{2m}(P_j, A\times A)\leq 2\,d_H^m(A_j, A)
 \end{align*}  
a.e.~in $\Omega\times \Omega$, 
that $\|{\rm dist}(v_{u_j}, A\times A)\|_{L^\infty(\Omega\times \Omega;\R^m\times \R^m)}\to 0$ as $j\to \infty$. Then, by the fundamental theorem of Young measures in Theorem~\ref{FTYM}\,(iii), ${\rm supp\,}\Lambda \subset A\times A\subset$ a.e.~in $\Omega\times \Omega$. If we take $P$ as the maximal Cartesian subset of $\widehat K$ containing $A\times A$, this shows that  
 $\Lambda\in \mathcal Y_P$ and finishes the proof.
\end{proof}

\begin{Remark} 
Based on Theorem~\ref{theo:YMcharacterization}, we can now give a short alternative proof of~\eqref{theo:short}. 
Precisely, combining~Theorem~\ref{theo:YMcharacterization} with ~\eqref{barycenter} and Lemma~\ref{asprop2.3Pedregal} shows that
\begin{align*}
\mathcal A_K^\infty & = \{[\nu]: \nu\in L_w^\infty(\Omega;\mathcal Pr(\R^m)), \Lambda_{(x,y)}= \nu_x\otimes \nu_y, \Lambda\in \mathcal Y^\infty_K \} \\ & = \{[\nu]: \nu\in L_w^\infty(\Omega;\mathcal Pr(\R^m)), \Lambda_{(x,y)}= \nu_x\otimes \nu_y, \Lambda \in \textstyle \bigcup_{P\in \mathcal P_{\widehat{K}}} \mathcal Y_P\} =: \widetilde{\mathcal{A}}.
\end{align*}

\color{olive} Since Lemma~\ref{lem:KscYM} (for $m=1$) and Remark~\ref{remKscYMdimm} (for $m>1$) imply that $\bigcup_{P\in \mathcal P_{\widehat K}} \mathcal A_{P^{\rm sc}} \subset \widetilde{\mathcal A} \subset \mathcal A_{\widehat K^{\rm sc}}$, and
\begin{align*}
\mathcal A_{\widehat K^{\rm sc}} =\bigcup_{(\alpha, \beta)\in \widehat K} \mathcal A_{Q_{\alpha, \beta}} = \bigcup_{\{\alpha, \beta\}\times \{\alpha, \beta\}\subset \widehat K} \mathcal A_{Q_{\alpha, \beta}} \subset \bigcup_{A\times A\in \mathcal{P}_{\widehat K}} \mathcal A_{A^{\rm co}\times A^{\rm co}} = \bigcup_{P\in \mathcal P_{\widehat K}} \mathcal A_{P^{\rm sc}}
\end{align*}
due to Lemma~\ref{lem:square} (for $m=1$) and~\eqref{ass76} (for $m>1$), the identity~\eqref{theo:short} follows. 
\end{Remark}

\section{Nonlocal indicator functionals}\label{sec:indicator}

The aim of this section is to relate the previous results with the theory of nonlocal unbounded functionals, in particular, with indicator functionals.

\subsection{Lower semicontinuity and relaxation}\label{subsec:51}
For $K\subset \R^m\times \R^m$, we define the indicator functional $I_K:L^\infty(\Omega;\R^m)\to \{0, \infty\}$  by
\begin{align}\label{indicatorfunctional}
I_K(u) := \int_{\Omega}\int_{\Omega} \chi_{K}(u(x), u(y))\,dx \,dy 
=\begin{cases}
0 & \text{if $u\in \mathcal{A}_K$,}\\
\infty & \text{otherwise;}
\end{cases} 
\end{align} 
recall the notations from~\eqref{indicator} and \eqref{def:AK}. 
 It is clear from the second equality in~\eqref{indicatorfunctional} that the lower semicontinuity and relaxation of $I_K$ regarding the weak$^\ast$ topology in $L^\infty(\Omega;\R^m)$ are closely related to the asymptotic behaviour of sequences in $\mathcal{A}_K$ with respect to the same topology, cf.~Remark \ref{rem1.2}\,a). 
In fact, the $L^\infty$-weak$^\ast$ lower semicontinuity of $I_K$ corresponds to the weak$^\ast$ closedness of ${\mathcal A}_K$, while determining its relaxation, i.e.,
\begin{align*}
I_K^{\rm rlx}(u)
:= \inf\{\liminf_{j\to \infty} I_{K}(u_j): u_j \weaklystar u \text{ in $L^\infty(\Omega;\R^m)$}\}
\end{align*}
for all $u\in L^\infty(\Omega;\R^m)$,
is equivalent to characterizing the $L^\infty$-weak$^\ast$ closure of $\mathcal{A}_K$, denoted by $\mathcal{A}_K^\infty$ in \eqref{AinftyE}. 

Formulated here again for the readers' convenience, the counterparts of Corollary~\ref{cor:Khatsc} and Theorem~\ref{theo:weakclosure_intro} in terms of indicator functionals are the following.

\begin{Corollary}\label{cor:chara_wlscp_indicator}
	Let $K\subset \R^m\times \R^m$ be as in Theorem~\ref{theo:weakclosure_intro}. 
	\begin{itemize}
\item[$(i)$]	The functional $I_K$ is $L^\infty$-weakly$^\ast$ lower semicontinuous, if and only if
\begin{align*}
\widehat{\widehat{K}^{\rm sc}} =\widehat K;
\end{align*}
for $m=1$, this is the same as $\widehat K$ (or equivalently,  $\chi_{\widehat K}$) being separately convex. 

\item[$(ii)$]	 Moreover, $I^{\rm rlx}_K=I_{\widehat{K}^{\rm sc}}$, where the latter is the functional in \eqref{indicatorfunctional} associated with the separately convex hull ${\widehat K}^{\rm sc}$.
\end{itemize}
\end{Corollary}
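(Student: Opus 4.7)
The plan is to recognize that Corollary~\ref{cor:chara_wlscp_indicator} is a direct reformulation of the previously established results about $\mathcal{A}_K$ and its $L^\infty$-weak$^\ast$ closure, translated into the language of the indicator functional $I_K$. Since $I_K$ takes only the values $0$ and $\infty$, and its effective domain is precisely $\mathcal{A}_K$, both assertions reduce to purely set-theoretic statements about $\mathcal{A}_K$ and $\mathcal{A}_K^\infty$ that have already been proved.

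For part $(i)$, I would observe that $L^\infty$-weak$^\ast$ lower semicontinuity of an extended-valued functional taking values in $\{0, \infty\}$ is equivalent to the weak$^\ast$ closedness of its zero sublevel set, since all nontrivial sublevel sets $\{I_K \leq c\}$ with $c \in \R$ coincide with $\mathcal{A}_K$. Consequently, $I_K$ is weakly$^\ast$ lower semicontinuous if and only if $\mathcal{A}_K$ is weakly$^\ast$ closed, and this is precisely the content of Corollary~\ref{cor:Khatsc}, which yields the condition $\widehat{\widehat{K}^{\rm sc}} = \widehat{K}$ together with the equivalence to separate convexity of $\widehat K$ when $m=1$.

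For part $(ii)$, I would compute $I_K^{\rm rlx}$ pointwise. Given any $u \in L^\infty(\Omega; \R^m)$ and any sequence $u_j \weaklystar u$, the quantity $\liminf_{j\to\infty} I_K(u_j)$ equals $0$ precisely when $u_j \in \mathcal{A}_K$ for infinitely many $j$ (in which case one can extract a subsequence entirely in $\mathcal{A}_K$ still converging to $u$), and equals $\infty$ otherwise. Hence $I_K^{\rm rlx}(u) = 0$ if and only if $u \in \mathcal{A}_K^\infty$, and $I_K^{\rm rlx}(u) = \infty$ otherwise, so that $I_K^{\rm rlx}$ is itself an indicator functional, namely the one associated with $\mathcal{A}_K^\infty$. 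Applying Theorem~\ref{theo:weakclosure_intro} to identify $\mathcal{A}_K^\infty = \mathcal{A}_{\widehat{K}^{\rm sc}}$ concludes that $I_K^{\rm rlx} = I_{\widehat{K}^{\rm sc}}$, as claimed.

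I do not expect any genuine obstacle here, as all the work has been done upstream. The only point requiring minor care is the existence of a recovery sequence contained in $\mathcal{A}_K$ (rather than merely asymptotically constrained to $K$), but this is already built into the definition of $\mathcal{A}_K^\infty$ and guaranteed by Theorem~\ref{theo:weakclosure_intro}. The hypotheses on $K$ inherited from Theorem~\ref{theo:weakclosure_intro} enter only through their use there and in Corollary~\ref{cor:Khatsc}, and no further assumptions need to be invoked.
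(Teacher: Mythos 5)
Your proposal is correct and follows essentially the same route as the paper, which presents this corollary as an immediate reformulation of Corollary~\ref{cor:Khatsc} and Theorem~\ref{theo:weakclosure_intro}, using exactly the observation that the $\{0,\infty\}$-valued functional $I_K$ has effective domain $\mathcal{A}_K$, so lower semicontinuity reduces to weak$^\ast$ closedness of $\mathcal{A}_K$ and the relaxation to identifying $\mathcal{A}_K^\infty=\mathcal{A}_{\widehat K^{\rm sc}}$. No gaps.
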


\subsection{Young measure relaxation}\label{sec:YMrelaxation}

As an application of Theorem \ref{theo:YMcharacterization}, we determine the relaxation in the Young measure setting of a class of extended-valued double-integral functionals. This result can be viewed as a generalization of \cite[Theorem 6.1]{BMC18}. 

For $K\subset \R^m\times \R^m$, let the functional $I^{\mathcal Y}_K: L^\infty_w
(\Omega;\mathcal Pr(\mathbb R^m))\to \{0,\infty\}$ be defined by
\begin{align}\label{IYK}
 I_{K}^{\mathcal{Y}}(\nu)  := \min_{P\in \mathcal{P}_{\widehat K}} \int_{\Omega}\int_{\Omega} \int_{\R^m}\int_{\R^m} \chi_P(\xi, \zeta) \,d\nu_x(\xi)\, d\nu_y(\zeta) \,dx\, dy  = \begin{cases} 0 & \text{if $\nu\otimes \nu\in \bigcup_{P\in \mathcal{P}_{\widehat K}} \mathcal Y_P$}, \\  \infty & \text{otherwise,}
	\end{cases} 
	\end{align}
for $\nu \in L_{w}^\infty(\Omega;{\mathcal Pr}(\mathbb R^m))$. 

The follwing reformulation of Theorem \ref{theo:YMcharacterization} states a Young measure relaxation for nonlocal indicator functionals in general dimensions.

\begin{Corollary}\label{cor:YMrelaxation}
Let $K\subset \R^m\times \R^m$ be compact.
\begin{itemize}
	\item[$(i)$] If the sequence $(u_j)_j \subset L^\infty(\Omega;\mathbb R^m)$ generates the Young measure $\nu$, in formulas, $u_j\stackrel{YM}{\longrightarrow} \nu$, then
	\begin{align}\label{lbIYKhat}
	\liminf_{j\to \infty} I_K(u_j)  \geq  I_{K}^{\mathcal{Y}}(\nu).
	\end{align}
	\item[$(ii)$] For every $\nu\in L_w^\infty(\Omega;{\mathcal Pr}(\R^m))$ there exists a sequence $(u_j)_j\subset L^\infty(\Omega;\R^m)$ with $u_j\stackrel{YM}{\longrightarrow} \nu$ such that 
	\begin{align*}
	\lim_{j\to \infty} I_K(u_j)  = I_{K}^{\mathcal{Y}}(\nu).
	\end{align*}
	\end{itemize}
\end{Corollary}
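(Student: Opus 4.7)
The plan is to deduce both parts directly from the characterization of Young measures in Theorem~\ref{theo:YMcharacterization}, namely $\mathcal{Y}_K^\infty = \bigcup_{P \in \mathcal{P}_{\widehat K}} \mathcal{Y}_P$, in combination with the structural result of Lemma~\ref{asprop2.3Pedregal} that couples a generating sequence $(u_j)_j$ with its associated nonlocal vector fields $(v_{u_j})_j$ via product Young measures. The key observation is that $I_K$ and $I_K^{\mathcal{Y}}$ both take values in $\{0, \infty\}$, so the inequality in (i) and the equality in (ii) reduce to checking membership in $\mathcal{A}_K$, respectively in $\bigcup_{P \in \mathcal{P}_{\widehat K}} \mathcal{Y}_P$.

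For part (i), I would argue by contraposition: if $I_K^{\mathcal{Y}}(\nu) = \infty$, one needs to show $\liminf_j I_K(u_j) = \infty$. Suppose to the contrary that $\liminf_j I_K(u_j) < \infty$. Then there exists a subsequence (not relabelled) with $I_K(u_j) = 0$, i.e., $u_j \in \mathcal{A}_K$ for every $j$. Since $(u_j)_j$ generates $\nu$, Lemma~\ref{asprop2.3Pedregal} ensures that the nonlocal vector fields $(v_{u_j})_j$ generate precisely the product Young measure $\{\nu_x \otimes \nu_y\}_{(x,y) \in \Omega\times\Omega}$. Hence $\nu \otimes \nu \in \mathcal{Y}_K^\infty$, and by Theorem~\ref{theo:YMcharacterization} there exists $P \in \mathcal{P}_{\widehat K}$ with $\nu \otimes \nu \in \mathcal{Y}_P$; this gives $I_K^{\mathcal{Y}}(\nu) = 0$, a contradiction. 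The case $I_K^{\mathcal{Y}}(\nu) = 0$ is trivial.

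For part (ii), I would split according to the value of $I_K^{\mathcal{Y}}(\nu)$. If $I_K^{\mathcal{Y}}(\nu) = 0$, then $\nu \otimes \nu \in \mathcal{Y}_P$ for some $P \in \mathcal{P}_{\widehat K}$, so in particular $\nu \otimes \nu \in \mathcal{Y}_K^\infty$ by Theorem~\ref{theo:YMcharacterization}; unwinding the definition of $\mathcal{Y}_K^\infty$ in~\eqref{def_YKinfty} yields a sequence $(u_j)_j \subset \mathcal{A}_K$ with $v_{u_j} \stackrel{YM}{\longrightarrow} \nu \otimes \nu$, and by the converse direction of Lemma~\ref{asprop2.3Pedregal} this same sequence satisfies $u_j \stackrel{YM}{\longrightarrow} \nu$. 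Since $I_K(u_j) = 0$ for each $j$, the recovery property holds. If $I_K^{\mathcal{Y}}(\nu) = \infty$, pick any $L^\infty$-bounded sequence $(u_j)_j$ generating $\nu$ (existence is guaranteed by, e.g.,~\cite[Chapter~8]{FoL07}); applying the contrapositive argument from part~(i) shows that at most finitely many $u_j$ can lie in $\mathcal{A}_K$, so after discarding these, $I_K(u_j) = \infty$ for all $j$, giving $\lim_j I_K(u_j) = \infty = I_K^{\mathcal{Y}}(\nu)$.

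Since essentially all substantial work has been done in Theorem~\ref{theo:YMcharacterization} and its Cartesian case Proposition~\ref{prop:YMchar_Cartesian}, no real obstacle remains beyond the careful bookkeeping of the two trivial cases $I_K^{\mathcal{Y}}(\nu) \in \{0, \infty\}$ and the correct invocation of Lemma~\ref{asprop2.3Pedregal} in both directions; the equivalence between the support of the generating product Young measure and the Cartesian structure dictated by $\mathcal{P}_{\widehat K}$ is the genuinely nontrivial content, and that has already been established upstream.
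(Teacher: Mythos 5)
Your proposal is correct and follows essentially the same route as the paper, which presents this corollary as a direct reformulation of Theorem~\ref{theo:YMcharacterization} without a separate written proof: since $I_K$ and $I_K^{\mathcal Y}$ are $\{0,\infty\}$-valued, both parts reduce to the membership statements you check via Lemma~\ref{asprop2.3Pedregal} and the characterization $\mathcal Y_K^\infty=\bigcup_{P\in\mathcal P_{\widehat K}}\mathcal Y_P$, with compactness of $K$ supplying the uniform bound needed to apply the lemma along the relevant subsequences. The only point worth making explicit is that in part (ii) the passage from ``$(v_{u_j})_j$ generates $\nu\otimes\nu$'' back to ``$(u_j)_j$ generates $\nu$'' uses the standard subsequence principle together with the fact that equality of product measures forces equality of their marginals.
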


\begin{Remark}
	\label{rem:IKhatsc=IYKhat} 
	
	If $K\subset \mathbb R^m \times \mathbb R^m$ is compact as in Theorem~\ref{theo:weakclosure_intro}, i.e.~$\widehat K^{\rm sc}$ is compact and satisfies~\eqref{ass76}, we can directly verify the expected relations between the functionals arising from classical and Young measure relaxation of $I_K$.
	For any 
	$\nu \in L^\infty_w(\Omega;{\mathcal Pr}(\mathbb R^m))$,
	\begin{align}\label{lbIfirstmoment}
	I^{\mathcal Y}_{K}(\nu)\geq I_{{\widehat K}^{\rm sc}}([\nu]);
	\end{align}
	moreover, for every $u \in L^\infty(\Omega;\R^m)$, there exists a Young measure $\nu \in L^\infty_w(\Omega;{\mathcal Pr}(\mathbb R^m))$ with $[\nu]=u$ such that 
	\begin{align}
	\label{ubIfirstmoment}
	I^{\mathcal Y}_{K}(\nu)\leq I_{{\widehat K}^{\rm sc}}([\nu])=I_{{\widehat K}^{\rm sc}}(u).
	\end{align}
 To see~\eqref{ubIfirstmoment}, it is 
		enough to invoke  
		 Theorem \ref{theo:weakclosure_intro} and the characterizion in Theorem \ref{theo:YMcharacterization}.
		
	As regards the justification of~\eqref{lbIfirstmoment}, we may assume without loss of generality that $I^{\mathcal Y}_{K}(\nu)=0$; thus, there exists $P=A\times A \in {\mathcal P}_{\widehat{K}}$ with $A\subset \R^m$ such that $\nu_x \otimes \nu_y \in P$ for a.e.~$(x,y)\in \Omega \times \Omega$. By Theorem \ref{theo:YMcharacterization}, one can find a sequence $(u_j)_j \subset {\mathcal A}_P$ generating $\nu$ and converging weakly$^\ast$ to $u= [\nu]$ in $L^\infty(\Omega;\R^m)$, with $u\in A^{\rm co}$ for a.e.~in $\Omega$. These observations, together with Lemma~\ref{lem:sepJensen} and $A^{\rm co}\times A^{\rm co} = (A\times A)^{\rm sc}\subset {\widehat K}^{\rm sc}$, imply that 
	\begin{align*}
	I^{\mathcal Y}_{K}(\nu) &\geq \int_{\Omega}\int_{\Omega} \int_{\R^m}\int_{\R^m} \chi_{A^{\rm co}\times A^{\rm co}}(\xi, \zeta) \,d\nu_x(\xi)\, d\nu_y(\zeta) \,dx\, dy \\
	&\geq	\int_{\Omega}\int_{\Omega} \int_{\R^m}\int_{\R^m} \chi_{{\widehat K}^{\rm sc}}(\xi, \zeta) \,d\nu_x(\xi)\, d\nu_y(\zeta) \,dx\, dy \\
	&	\geq\int_\Omega \int_\Omega \chi_{{\widehat K}^{\rm sc}}([\nu_x], [\nu_y])\,d x \,d y 
	=I_{\widehat K^{\rm sc}}([\nu]),
	\end{align*} 
	as stated. 
\end{Remark}

	As a consequence of Corollary \ref{cor:YMrelaxation} and the results in \cite[Section~6]{BMC18}, one can deduce a Young measure representation for the relaxation of constrained nonlocal integral functionals of the type
	\begin{align}\label{extended}
	L^\infty(\Omega;\mathbb R^m) \ni u \to \int_\Omega\int_\Omega w((x,y,u(x),u(y))\,dx \,d
	y+I_K(u),
	\end{align}
	where $w:\Omega \times \Omega \times \mathbb R^m \times \mathbb R^m\to \mathbb R_\infty$ is exactly as in \cite[Theorem 6.1]{BMC18}.
	Indeed, the superadditivity of $\liminf$, \eqref{lbIYKhat}, and \cite[Theorem 6.1]{BMC18} entail for every sequence $(u_j)_j \subset L^\infty(\Omega;\mathbb R^m)$ with $u_j\stackrel{YM}{\longrightarrow} \nu$ that
	\begin{align*}
&	\liminf_{j\to \infty} \Bigl(\int_\Omega\int_\Omega w(x,y,u_j(x),u_j(y))\,dx\,dy+I_K(u_j)\Bigr) \\  &\qquad \qquad \geq 
	\int_\Omega \int_\Omega \int_{\mathbb R^m}\int_{\mathbb R^m}w(x,y,\xi,\zeta)d\nu_x(\xi)\,d \nu_y(\zeta)\,dx \,dy+ I_{K}^{\mathcal{Y}}(\nu).
	\end{align*}
	On the other hand, if $\nu\in L_w^\infty(\Omega;{\mathcal Pr}(\R^m))$, we choose $(u_j)_j$ to be a sequence as in~Corollary~\ref{cor:YMrelaxation}\,(ii), and 
	 apply the version of the fundamental theorem on Young measures in~\cite[Proposition~3.6]{BMC18} to conclude that
	\begin{align*}
&	\lim_{j\to \infty} \Bigl(\int_\Omega\int_\Omega w(x,y,u_j(x),u_j(y))\,dx \,d
	y+I_K(u_j)\Bigr) \\&  \qquad = \int_\Omega \int_\Omega \int_{\mathbb R^m}\int_{\mathbb R^m}w(x,y,\xi,\zeta)\,d\nu_x(\xi)\,d \nu_y(\zeta)\,dx \,dy+ I_{K}^{\mathcal{Y}}(\nu).
	\end{align*}

\subsection{Notions of nonlocal convexity}\label{subsec:nonlocal_convexity} In \cite{BMC18} and the references therein, the authors introduce and analyze different notions of nonlocal convexity for inhomogeneous finite-valued double-integral functionals, including nonlocal convexity, nonlocal convexity for Young measures, and a nonlocal Jensen inequality.
Here, we transfer these notions to our context of homogeneous indicator functionals in the scalar setting, i.e.~functionals $I_K$ and $I_K^{\mathcal{Y}}$ as in~\eqref{indicatorfunctional} and~\eqref{IYK} with $K$ as in Theorem~\ref{theo:weakclosure_intro}, and discuss their relation.  

Let us first define 
the condition referred to as  nonlocal convexity \eqref{NC}: 
For every  $w \in L^\infty(\Omega;\mathbb R^m),$ the function 
\begin{equation}\label{NC}
\tag{NC}
\iota_w: \mathbb R^m \to \{0,\infty\}, \quad \iota_w(\xi) :=\int_{\Omega}\chi_{\widehat K}(\xi,w(x))\,dx
\quad \hbox{ is convex.}
\end{equation}
A generalization of condition \eqref{NC} is the following  nonlocal convexity for Young measures \eqref{NY}, which requires that for every $\nu \in L^\infty_w
(\Omega;\mathcal Pr(\mathbb R^m))$, the function
\begin{equation}
\label{NY}\tag{NY}
\Im_{\nu}: \mathbb R^m \to \{0,\infty\}, \quad  \Im_{\nu}(\xi ) := 
\int_{\Omega}\int_{\mathbb R^m}  \chi_{\widehat K}(\xi,\zeta)  \,d\nu_x(\zeta)\,dx 
\quad \hbox { is convex.}
\end{equation}

Inspired by Pedregal~\cite[Proposition~3.1 and (4.3)]{Ped97}, we consider the nonlocal Jensen's inequality 
\begin{equation}\label{NJ}
\tag{NJ} 
I^{\mathcal Y}_K(\nu) \geq I_K([\nu])
\end{equation}
for any $\nu \in L^\infty_w(\Omega;\mathcal Pr(\mathbb R^m))$, cf.~\eqref{IYK} for the definition of $I_K^{\mathcal{Y}}$. 
Finally, we denote by ${\rm (SC)}$ the separate convexity of $\chi_{\widehat K}$ (or equivalently, of $\widehat K$).

The next proposition establishes the equivalence of all these notions. In particular,  in view of Corollary~\ref{cor:Khatsc} and Remark \ref{rem1.2}\;a), they are all necessary and sufficient for $L^\infty$-weak$^\ast$ lower semicontinuity of $I_K$.

\begin{Proposition}\label{equivNonlocalcond}
 If $K\subset \mathbb R^m \times \mathbb R^m$ is as in Theorem~\ref{theo:weakclosure_intro},  then 
	\begin{align*}
	\color{olive} \eqref{NJ}
	 \Leftrightarrow \color{black} {\rm (SC)}
	 \Leftrightarrow \eqref{NC} \Leftrightarrow\eqref{NY} .
	\end{align*}
\end{Proposition}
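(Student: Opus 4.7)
The plan is to prove the four conditions pairwise equivalent by establishing the cycle (SC) $\Rightarrow$ \eqref{NY} $\Rightarrow$ \eqref{NC} $\Rightarrow$ (SC), and then separately to prove (SC) $\Leftrightarrow$ \eqref{NJ} via a Young measure reformulation that brings Theorem~\ref{theo:YMcharacterization} and Corollary~\ref{cor:Khatsc} into play.

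For the cycle, the implication (SC) $\Rightarrow$ \eqref{NY} is immediate: if $\widehat K$ is separately convex, then the extended-valued function $\chi_{\widehat K}(\cdot,\zeta)$ is convex for every fixed $\zeta$, and $\Im_\nu$ is an integral of these convex functions against the positive measure $\nu_x\otimes dx$, hence convex. The step \eqref{NY} $\Rightarrow$ \eqref{NC} is obtained by specializing to the Dirac-mass Young measure $\nu_x=\delta_{w(x)}$, which reduces $\Im_\nu$ to $\iota_w$. Finally, for \eqref{NC} $\Rightarrow$ (SC), I would plug in $w\equiv\zeta$ constant: the resulting function $\iota_w(\xi) = \mathcal{L}^n(\Omega)\chi_{\widehat K}(\xi,\zeta)$ is convex, so its effective domain $\mathfrak{\widehat K}_2^\zeta = \{\xi:(\xi,\zeta)\in\widehat K\}$ is convex for every $\zeta$; the symmetry of $\widehat K$ transfers the convexity to the sections in the other argument, yielding separate convexity.

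To handle (SC) $\Leftrightarrow$ \eqref{NJ}, the decisive observation is that $I_K^{\mathcal Y}(\nu) = 0$ is, by the very definition~\eqref{IYK} in combination with Theorem~\ref{theo:YMcharacterization}, equivalent to $\nu\otimes\nu \in \mathcal Y_K^\infty$, i.e., to $\nu$ being generated by some sequence $(u_j)_j\subset\mathcal{A}_K$. By Lemma~\ref{asprop2.3Pedregal}, the weak$^\ast$ limit of such a sequence is $[\nu] \in \mathcal{A}_K^\infty$. Therefore \eqref{NJ} is equivalent to the inclusion $\mathcal{A}_K^\infty\subset \mathcal{A}_K$, i.e., to the $L^\infty$-weak$^\ast$ closedness of $\mathcal{A}_K$; by Corollary~\ref{cor:Khatsc} this amounts to $\widehat{\widehat K^{\rm sc}} = \widehat K$, and in the scalar case $m=1$ to the separate convexity of $\widehat K$, thereby closing the equivalence. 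The direct route (SC) $\Rightarrow$ \eqref{NJ} can alternatively be seen hands-on: if $I_K^{\mathcal Y}(\nu)=0$, there is $P=A\times A\in\mathcal{P}_{\widehat K}$ with $\mathrm{supp}\,\nu_x\subset A$ a.e., so $[\nu_x]\in A^{\rm co}$, and separate convexity of $\widehat K$ yields $A^{\rm co}\times A^{\rm co}\subset\widehat K$, placing $([\nu_x],[\nu_y])$ in $K$.

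The main obstacle is the last leg \eqref{NJ} $\Rightarrow$ (SC) in the vectorial case $m>1$, where the mere set identity $\widehat{\widehat K^{\rm sc}} = \widehat K$ is \emph{a priori} strictly weaker than separate convexity of $\widehat K$. To bridge this gap, I would invoke the structural hypothesis of Theorem~\ref{theo:weakclosure_intro}, which describes $\widehat{\widehat K^{\rm sc}}$ as the union of the symmetric cubes $Q_{\alpha,\beta}$ with $(\alpha,\beta)\in K$ and further requires these to exhaust the maximal Cartesian subsets of $\widehat K^{\rm sc}$. Combined with the separate convexity of $\widehat K^{\rm sc}$ (cf.~the discussion around Lemma~\ref{Khatsepconv}), this rigid geometric description of $\widehat K=\bigcup_{(\alpha,\beta)\in K} Q_{\alpha,\beta}$ should force the sections $\mathfrak{\widehat K}_1^\zeta$ to be convex, recovering (SC). I expect this geometric verification, rather than the manipulation of the Jensen inequality itself, to be the technical heart of the vectorial version of the proof.
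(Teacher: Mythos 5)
Your cycle ${\rm (SC)}\Rightarrow\eqref{NY}\Rightarrow\eqref{NC}\Rightarrow{\rm (SC)}$ is exactly the paper's argument, including the specialization to Dirac masses and then to constant $w$ with the symmetry of $\widehat K$. For ${\rm (SC)}\Leftrightarrow\eqref{NJ}$ you take a genuinely different, but equivalent, route: you identify $\{\nu: I_K^{\mathcal Y}(\nu)=0\}$ with $\mathcal Y_K^\infty$ via Theorem~\ref{theo:YMcharacterization}, translate \eqref{NJ} into the inclusion $\mathcal A_K^\infty\subset \mathcal A_K$, i.e.\ the $L^\infty$-weak$^\ast$ closedness of $\mathcal A_K$, and conclude with Corollary~\ref{cor:Khatsc}. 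The paper instead combines the inequalities \eqref{lbIfirstmoment} and \eqref{ubIfirstmoment} of Remark~\ref{rem:IKhatsc=IYKhat} to deduce $I_K=I_{\widehat K^{\rm sc}}$, hence $\widehat{\widehat K^{\rm sc}}=\widehat K$ by Proposition~\ref{lem:Ik=ItildeK}, and then invokes Lemma~\ref{Khatsepconv} to pass from this set identity to separate convexity of $\widehat K$. Both routes rest on the same underlying facts; yours makes the link with weak$^\ast$ closedness explicit, which is a nice conceptual bonus, and your hands-on argument for ${\rm (SC)}\Rightarrow\eqref{NJ}$ via $(A\times A)^{\rm sc}=A^{\rm co}\times A^{\rm co}\subset\widehat K$ is correct.

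The one unresolved point is the leg $\eqref{NJ}\Rightarrow{\rm (SC)}$ for $m>1$, which you defer to a ``geometric verification''. That verification cannot succeed in general: take $K=(A_1\times A_1)\cup(A_2\times A_2)$ with $A_1,A_2\subset\R^2$ two crossing segments. Then $K=\widehat K$, $\widehat K^{\rm sc}$ is compact and~\eqref{ass76} holds with $\mathcal P_{\widehat K^{\rm sc}}=\{A_1\times A_1, A_2\times A_2\}$, and $\widehat{\widehat K^{\rm sc}}=\widehat K$; moreover \eqref{NJ} holds, since any $\nu$ with $I_K^{\mathcal Y}(\nu)=0$ has ${\rm supp\,}\nu_x$ contained a.e.\ in one convex $A_i$, whence $([\nu_x],[\nu_y])\in A_i\times A_i\subset K$. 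Yet $\widehat K$ is not separately convex, and \eqref{NC} fails for $w\equiv 0$. So the chain $\eqref{NJ}\Leftrightarrow{\rm (SC)}$ is genuinely a scalar statement. You are in good company here: the paper's own proof of this implication also covers only $m=1$, because it relies on Lemma~\ref{Khatsepconv} (valid only for $m=1$), and the subsection is explicitly framed ``in the scalar setting''. For $m=1$ your proof is complete and correct; the $m>1$ gap you flag is one the paper does not close either, and in fact cannot be closed as stated.
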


\begin{proof}[Proof] For the proof of ${\rm (NJ)}\Leftrightarrow {\rm (SC)}$, we make use of~\eqref{lbIfirstmoment} and \eqref{ubIfirstmoment}, together with the fact that $I_K= I_{K^{\rm sc}}$ implies 
\begin{align*}
	\widehat{K}^{\rm sc} = \widehat{\widehat{K}^{\rm sc}} = \widehat{K}
	\end{align*} 
due to Proposition \ref{lem:Ik=ItildeK} and Lemma~\ref{Khatsepconv}.

	The arguments behind the other implications are straight-forward.  
	The implication ${\rm (SC)}\Rightarrow {\rm (NY)}$ follows right from the definition of separate convexity of $\chi_{\widehat K}$.
	Via the identification of $u \in L^\infty(\Omega;\R^m)$ with the family of Dirac measures $\{\delta_{u(x)}\}_{x\in \Omega}$, the condition \eqref{NY} is clearly at least as strong as~\eqref{NC}.
	 To see ${\rm (NC)}\Rightarrow {\rm (SC)}$, it suffices to restrict \eqref{NC} to constant functions and exploit the symmetry of $\widehat{K}$. 
\end{proof}

\section{Nonlocal supremal functionals}\label{6}
The main focus of this section is the proof of Theorem \ref{theo:main}, which is based on the results established previously.
In what follows, $W:\mathbb R^m\times \mathbb R^m\to \mathbb R$ is always assumed to be lower semicontinuous and coercive. 
In terms of the level sets of $W$, this means that $L_c(W)$ are compact for any $c\in \R$.
	
We start, in view of Remark \ref{rem1.2}\;a), 
with a characterization result for $L^\infty$-weak$^\ast$ lower semicontinuity of functionals as in~\eqref{ourfunct}  that exploits the relations with nonlocal indicator functionals and nonlocal inclusions.
It is a nonlocal version of the analogous statement in the local setting pointed out first by Acerbi, Buttazzo \& Prinari~in \cite[Remark~4.4]{ABP02} and used later e.g.~by Briani, Garroni \& Prinari in~\cite[Proposition 4.4]{BGP04}, see~also \cite[Lemma 1.4]{BJW01}.

\begin{Proposition}
	\label{propequivsupremalunbounded}
Recalling the definitions in~\eqref{ourfunct}, \eqref{def:AK} and ~\eqref{indicatorfunctional}, the following three statements are equivalent:
	\begin{itemize}
	\item[$(i)$] $J$ is $L^\infty$-weakly$^\ast$ lower semicontinuous; 
	\item[$(ii)$] $\mathcal{A}_{L_c(W)}$ is $L^\infty$-weakly$^\ast$ closed for all $c\in \R$;
	\item[$(iii)$] $I_{L_c(W)}$ is $L^\infty$-weakly$^\ast$ lower semicontinuous for all $c\in \R$.
	\end{itemize}
\end{Proposition}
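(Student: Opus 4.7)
The proof is essentially a matter of identifying the sublevel sets of $J$ with the sets $\mathcal A_{L_c(W)}$ and exploiting the two-valued structure of the indicator functional. My plan is to establish the chain $(i)\Leftrightarrow(ii)\Leftrightarrow(iii)$, with the key observation being
\begin{align*}
\{u\in L^\infty(\Omega;\R^m): J(u)\leq c\}=\mathcal{A}_{L_c(W)}\quad \text{for every }c\in\R.
\end{align*}
This identity follows directly from unwinding the essential supremum: $J(u)\leq c$ amounts to $W(u(x),u(y))\leq c$ for $(\mathcal L^n\otimes\mathcal L^n)$-a.e.~$(x,y)\in\Omega\times\Omega$, which, by definition of the sublevel set $L_c(W)$, is the same as $(u(x),u(y))\in L_c(W)$ a.e., i.e., $u\in\mathcal{A}_{L_c(W)}$.

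For $(ii)\Leftrightarrow(iii)$, I would simply note that $I_{L_c(W)}$ takes only the values $0$ and $\infty$, with $I_{L_c(W)}(u)=0$ if and only if $u\in\mathcal{A}_{L_c(W)}$. Consequently, $I_{L_c(W)}$ is (sequentially) $L^\infty$-weakly$^\ast$ lower semicontinuous if and only if its zero-sublevel set $\mathcal{A}_{L_c(W)}$ is (sequentially) weakly$^\ast$ closed; any higher sublevel set is either $\mathcal{A}_{L_c(W)}$ or all of $L^\infty(\Omega;\R^m)$, so no additional information is obtained. Running this equivalence over all $c\in\R$ yields the desired equivalence.

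For $(i)\Leftrightarrow(ii)$, I would invoke the standard characterization that a functional is (sequentially) lower semicontinuous exactly when all its sublevel sets are (sequentially) closed, and combine it with the identification displayed above. To move freely between the topological and sequential notions, the plan is to exploit the coercivity of $W$: it forces each $\mathcal A_{L_c(W)}$ to be bounded in $L^\infty(\Omega;\R^m)$ (since $L_c(W)$ is compact), so that the metrizability of the $L^\infty$-weak$^\ast$ topology on bounded sets recorded in Remark~\ref{rem1.2}\,a) allows one to pass between sequential and topological weak$^\ast$ closedness without issue.

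There is no real obstacle; the argument is definitional and the only point requiring minor care is the sequential-versus-topological matter just addressed, which is handled uniformly by the coercivity hypothesis.
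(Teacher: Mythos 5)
Your proof is correct and follows essentially the same route as the paper: both reduce everything to the identity $\{J\leq c\}=\mathcal A_{L_c(W)}$ and the two-valued structure of $I_{L_c(W)}$; the paper merely writes out by hand (via an $\varepsilon$-contradiction argument for $(ii)\Rightarrow(i)$) the standard equivalence between sequential lower semicontinuity and sequential closedness of all sublevel sets that you invoke directly. Your remark on passing between sequential and topological weak$^\ast$ notions via coercivity and metrizability on bounded sets matches Remark~\ref{rem1.2}\,a) of the paper.
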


\begin{proof} The equivalence of $(ii)$ and $(iii)$ follows immediately from \eqref{indicatorfunctional}. It remains to prove that $(i)$ and $(ii)$ are equivalent. 
	
	Assuming that $(i)$ holds, consider any $c \in \mathbb R$ and any sequence $(u_j)_j\subset\mathcal A_{L_c(W)}$ and $u\in L^\infty(\Omega;\R^m)$ such that $u_j\rightharpoonup^\ast u$ in $L^\infty(\Omega;\mathbb R^m)$. Since the $L^\infty$-weak$^\ast$ lower semicontinuity of $J$ ensures that
	$$
	\displaystyle{\supess_{(x,y)\in\Omega \times \Omega}W(u(x),u(y))\leq \liminf_{j\to \infty}\displaystyle{\supess_{(x,y)\in\Omega \times \Omega}W(u_j(x),u_j(y))} \leq c,}
	$$
	we conclude that $(u(x), u(y))\in L_c(W)$ for a.e.~$(x,y)\in \Omega\times \Omega$, meaning $u \in \mathcal A_{L_c(W)}$. This proves $(ii)$.

	For the reverse implication, we take $u_j \rightharpoonup^\ast u$ in $L^\infty(\Omega;\R^m)$ with 
	\begin{align*}
	\lim_{j\to \infty} J(u_j)=\liminf_{j\to \infty} J(u_j) < \infty. 
	\end{align*}

	Let $C_{\rm sup}:=\supess_{(x,y)\in \Omega \times \Omega}W(u(x),u(y))$ and assume by contradiction that 
	$$\lim_{j\to \infty} J(u_j) = \lim_{j\to \infty}\supess_{(x,y)\in \Omega\times \Omega}W(u_j(x), u_j(y)) = c <C_{\rm sup}.$$ 
	Then, for any $\varepsilon \in (0, C_{\rm sup}-c)$ there exists an index $N=N(\varepsilon) \in \mathbb{N}$ such that for every $j \geq N$, 
	\begin{align*}
	 \supess_{(x,y)\in \Omega \times \Omega}W(u_j(x), u_j(y)) \leq c+ \varepsilon<C_{\rm sup},
	 \end{align*} 
	 or equivalently, $u_j \in \mathcal A_{L_{c+\varepsilon}(W)}$. Due to $(ii)$,
	  we infer that $u \in \mathcal A_{L_{c+\varepsilon}(W)}$, and hence, $W(u(x), u(y))\leq c+\varepsilon$ a.e.~in $\Omega\times \Omega$. The desired contradiction follows now from 
	\begin{align*}
	C_{\rm sup} = \supess_{(x,y)\in \Omega \times \Omega}W(u(x),u(y))	\leq c+\varepsilon < C_{\rm sup},
	\end{align*}	
	which concludes the proof. 
\end{proof}

\subsection{Lower semicontinuity and relaxation}\label{subsec:lsc_relaxation} 
The following characterization result, which can be obtained from combining Corollary~\ref{cor:Khatsc} and Proposition~\ref{propequivsupremalunbounded}, generalizes Theorem \ref{theo:main}\,(i) to the vectorial setting, cf.~Lemma~\ref{Khatsepconv}.

\begin{Corollary} \label{cor}
Let $J$ be a nonlocal supremal functional as in~\eqref{ourfunct} such that $\widehat{L_c(W)}$ is compact and satisfies~\eqref{ass76} for every $c\in \R$. 
Then, $J$ is $L^\infty$-weakly$^\ast$ lower semicontinuous if and only if for all $c\in \R$,
\begin{align*}
\widehat{\widehat{L_c(W)}^{\rm sc}}=\widehat{L_c(W)}.
\end{align*}
\end{Corollary}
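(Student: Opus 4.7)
The plan is to extract the characterization from combining the two cited ingredients---Proposition~\ref{propequivsupremalunbounded} and Corollary~\ref{cor:Khatsc}---applied levelwise in $c\in\mathbb{R}$. Since $W$ is lower semicontinuous and coercive, each sublevel set $K_c := L_c(W)\subset\mathbb{R}^m\times\mathbb{R}^m$ is compact. Together with the standing hypothesis that $\widehat{K_c}$ is compact and satisfies the structural representation~\eqref{ass76}, this certifies that every $K_c$ fulfills the assumptions of Theorem~\ref{theo:weakclosure_intro}, and hence of Corollary~\ref{cor:Khatsc}.

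With this in place, I would chain the equivalences. By Proposition~\ref{propequivsupremalunbounded}, $J$ is $L^\infty$-weakly$^\ast$ lower semicontinuous if and only if $\mathcal{A}_{K_c}$ is $L^\infty$-weakly$^\ast$ closed for every $c\in\mathbb{R}$. Applying Corollary~\ref{cor:Khatsc} to each such $K_c$ converts the weak$^\ast$ closedness of $\mathcal{A}_{K_c}$ into the algebraic identity $\widehat{\widehat{K_c}^{\rm sc}}=\widehat{K_c}$. Quantifying over all $c\in\mathbb{R}$ then yields the stated characterization.

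The step requiring a bit of care is checking that the vectorial hypothesis placed on the sublevel sets is the exact one needed to invoke Corollary~\ref{cor:Khatsc} uniformly in $c$; beyond this bookkeeping, no new analytical input is necessary because both Proposition~\ref{propequivsupremalunbounded} and Corollary~\ref{cor:Khatsc} do all the heavy lifting. In the scalar case $m=1$, one may further appeal to Lemma~\ref{Khatsepconv} and Remark~\ref{rem:failure}\,a) to rewrite the condition $\widehat{\widehat{L_c(W)}^{\rm sc}}=\widehat{L_c(W)}$ as separate convexity of each symmetrized and diagonalized sublevel set; translated back to the density via the notion of separate level convexity introduced in Section~\ref{3}, this recovers Theorem~\ref{theo:main}\,(i) and justifies the claim that the corollary genuinely extends the scalar result to the vectorial setting.
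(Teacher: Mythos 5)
Your argument is exactly the paper's: the corollary is obtained by combining Proposition~\ref{propequivsupremalunbounded} (reducing weak$^\ast$ lower semicontinuity of $J$ to weak$^\ast$ closedness of $\mathcal A_{L_c(W)}$ for every $c$) with Corollary~\ref{cor:Khatsc} applied levelwise, the hypotheses on $\widehat{L_c(W)}$ being precisely what is needed to invoke the latter for each $c$. Your closing remark on the scalar case likewise matches the paper's own comment that this generalizes Theorem~\ref{theo:main}\,(i).
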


\begin{Remark}\label{rem:firstparttheomain}
Notice that the sufficiency of the separate convexity of the symmetrized and diagonalized sublevel sets of $W$ to ensure $L^\infty$-weak$^\ast$ lower semicontinuity of $J$ as in~\eqref{ourfunct} holds without any further assumptions also in the vectorial case $m>1$. The argument employs Proposition~\ref{suffseplevconv} under consideration of \eqref{J=tildeJ} and~\eqref{levelset_W} below. 
\end{Remark}

Our next goal is to establish a representation formula for the relaxation of $J$. Inspired by the previous corollary, we define $\widehat{W}:\R^{m}\times \R^m\to \R$ by
\begin{align}\label{defWhat}
\widehat{W}(\xi, \zeta) := {\rm inf} \{c\in \R: (\xi, \zeta)\in \widehat{L_{c}(W)}\},\quad (\xi, \zeta)\in \R^m\times \R^m. 
\end{align} 
Then, for any $c\in \R$,
\begin{align}\label{levelset_W}
L_c(\widehat{W}) = \widehat{L_c(W)}.
\end{align}
Since the sublevel sets of $W$ are compact, this shows in particular that the level sets of $\widehat{W}$ are compact as well, and hence, that $\widehat{W}$ is lower semicontinuous. Moreover, $\widehat W$ is coercive due to $\widehat W\geq W$, and symmetric, i.e.,~$\widehat W(\xi,\zeta)= \widehat W(\zeta, \xi)$ for every $(\xi,\zeta)\in \mathbb R^m \times \mathbb R^m$, by definition, cf.~\eqref{hatK}. 

It is crucial to realize that a functional $J$ as in~\eqref{ourfunct} has a uniquely determined supremand $W$  only up to symmetrization and diagonalization in the sense of~\eqref{defWhat}. 
To be precise, it holds that
\begin{align}\label{J=tildeJ}
J(u)= \supess_{(x,y)\in \Omega\times \Omega} W(u(x), u(y)) =  \supess_{(x,y)\in \Omega\times \Omega} \widehat{W}(u(x), u(y)) =:\hat{J}(u) 
\end{align}
for $u\in L^{\infty}(\Omega;\mathbb R^m)$;
indeed, along with Proposition~\ref{lem:Ik=ItildeK} and~\eqref{levelset_W},
\begin{align}\label{representation_supremal}
 \supess_{(x,y)\in \Omega\times \Omega} \widehat{W}(u(x), u(y)) &= \inf\{ c\in \R: u\in \mathcal{A}_{L_c(\widehat{W})}\} 
 =  \inf\{ c\in \R: u\in \mathcal{A}_{\widehat{L_c(W)}}\} \\ &= \inf\{ c\in \R: u\in \mathcal{A}_{L_c(W)}\} = \supess_{(x,y)\in \Omega\times \Omega} W(u(x), u(y)).  \nonumber
 \end{align}
 
In light of  Definition~\ref{seplevconv} for the separate level convex envelope of a function and Definition~\ref{separateconvexitysets} for the separately convex hull of a set, it is immediate to  see that 
\begin{align}\label{slcenvelope}
L_c(\widehat W^{\rm slc}) \supset L_c(\widehat W)^{\rm sc} \qquad \text{for every $c \in \mathbb R$.}
\end{align}

If $m=1$, one can show that even  
equality holds in \eqref{slcenvelope}. In particular, if we recall the properties of $\widehat{W}$ and  Remark~\ref{rem:Esc_compact}, this implies that 
 $\widehat W^{\rm slc}:\mathbb R \times \mathbb R \to \mathbb R$ is lower semicontinuous and coercive. 

\begin{Lemma}
Let $\widehat W$ as in~\eqref{defWhat} and $m=1$. Then, for every $c\in \R$,
\begin{align}\label{slcenvelope=}
L_c(\widehat W^{\rm slc}) = L_c(\widehat W)^{\rm sc}.
\end{align}
\end{Lemma}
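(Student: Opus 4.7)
The plan is to reduce~\eqref{slcenvelope=} to the already-established inclusion~\eqref{slcenvelope} together with a reverse inclusion obtained by exhibiting a separately level convex minorant $g\leq \widehat W$ whose sublevel sets are precisely $L_c(\widehat W)^{\rm sc}$; the maximality property of $\widehat W^{\rm slc}$ built into Definition~\ref{seplevconv} will then force $L_c(\widehat W^{\rm slc})\subset L_c(g)=L_c(\widehat W)^{\rm sc}$, closing the missing inclusion.

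Concretely, I will set
\[
g(\xi,\zeta):=\inf\{c\in\mathbb R: (\xi,\zeta)\in L_c(\widehat W)^{\rm sc}\},\qquad (\xi,\zeta)\in \R\times\R,
\]
and first note that $g\leq \widehat W$ because the trivial inclusion $L_c(\widehat W)\subset L_c(\widehat W)^{\rm sc}$ holds for every $c\in\R$. Since $c'\mapsto L_{c'}(\widehat W)^{\rm sc}$ is non-decreasing in $c'$ (the hull operation preserves inclusions), a direct computation will yield the representation
\[
L_c(g) \;=\; \bigcap_{c'>c} L_{c'}(\widehat W)^{\rm sc}\;=\;\bigcap_{j\in \mathbb N}L_{c+1/j}(\widehat W)^{\rm sc},
\]
valid for every $c\in\R$.

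The key step is then to apply Lemma~\ref{lem:intersectionsKj} to the nested sequence $K_j:=L_{c+1/j}(\widehat W)$. By~\eqref{levelset_W}, $K_j=\widehat{L_{c+1/j}(W)}$, so $K_j$ is symmetric and diagonal by construction, and compact thanks to the coercivity and lower semicontinuity of $W$. Lemma~\ref{lem:intersectionsKj} then gives
\[
L_c(g)\;=\;\bigcap_{j\in\mathbb N}K_j^{\rm sc}\;=\;\Bigl(\bigcap_{j\in\mathbb N}K_j\Bigr)^{\rm sc}\;=\;L_c(\widehat W)^{\rm sc},
\]
so that every sublevel set of $g$ is separately convex. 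By Definition~\ref{seplevconv}, $g$ is therefore separately level convex, and the strategy outlined in the first paragraph delivers~\eqref{slcenvelope=}.

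The main obstacle is really concentrated in the invocation of Lemma~\ref{lem:intersectionsKj}: one must make sure that the sets $L_{c+1/j}(\widehat W)$ are genuinely compact, symmetric, and diagonal, which is the non-trivial point where the scalar restriction $m=1$ enters (via Remark~\ref{rem:Esc_compact} ensuring that $\widehat K^{\rm sc}$ remains compact after the hull is taken, if needed in subsequent use). These properties follow directly from~\eqref{levelset_W}, from the fact that $\widehat{(\cdot)}$ always yields a symmetric and diagonal set, and from the standing coercivity and lower semicontinuity of $W$. All remaining pieces --- the representation of $L_c(g)$ as a countable intersection, the monotonicity argument, and the appeal to maximality of $\widehat W^{\rm slc}$ --- are routine.
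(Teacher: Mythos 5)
Your proposal is correct and follows essentially the same route as the paper: the auxiliary function $g$ is exactly the paper's $V$, the identification $L_c(g)=\bigcap_j L_{c+1/j}(\widehat W)^{\rm sc}$ combined with Lemma~\ref{lem:intersectionsKj} is the identical key step, and the conclusion via maximality of $\widehat W^{\rm slc}$ matches. The only small inaccuracy is in your aside about where $m=1$ enters: the sublevel sets $L_{c+1/j}(\widehat W)$ are compact, symmetric and diagonal in any dimension, and the scalar restriction is really needed because Lemma~\ref{lem:intersectionsKj} itself (resting on the square representation of Lemma~\ref{lem:square}) is only valid for $m=1$.
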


\begin{proof}
Define the auxiliary function
\begin{align*}
V(\xi, \zeta) := {\rm inf} \{c\in \R: (\xi, \zeta)\in L_{c}(\widehat W)^{\rm sc}\},\quad (\xi, \zeta)\in \R\times \R.
\end{align*} 
Since all sublevel sets of $\widehat W$ are compact, symmetric and diagonal, Lemma~\ref{lem:intersectionsKj} entails that for any $c\in \R$,
\begin{align*}
L_c(V) =  \bigcap_{j\in \mathbb N} L_{c+\frac{1}{j}}(\widehat W)^{\rm sc} = \Big(\bigcap_{j\in \mathbb N} L_{c+\frac{1}{j}}(\widehat W)\Big)^{\rm sc} =  L_c(\widehat W)^{\rm sc},  
\end{align*}
which shows that $V$ is separately level convex. 
Due to $\widehat W\geq V$, we conclude that $\widehat W^{\rm slc}\geq V$, and consequently $L_c(\widehat{W}^{\rm slc})\subset L_c(V) = L_c(\widehat W)^{\rm sc}$ for all $c\in \R$. Considering that the other inclusion is immediate in view of the definition of the separately level convex envelope $\widehat{W}^{\rm slc}$ completes the proof. 
\end{proof}

With these preparations, we can now prove Theorem \ref{theo:main}\,(ii), namely the relaxation result for supremal nonlocal functionals in the scalar case.
\begin{Proposition}\label{prop:relaxation2}
	Let $J$ be the functional in \eqref{ourfunct} with $m=1$.
The relaxation of $J$ given by its $L^\infty$-weak* lower semicontinuous envelope
\begin{align}\nonumber
J^{\rm rlx}(u)= \inf \{\liminf_{j\to \infty} J(u_j): u_j\weaklystar u \text{ in $L^\infty(\Omega)$}\}, \quad u\in L^\infty(\Omega),
\end{align}
admits the supremal representation
\begin{align*}
J^{\rm rlx}(u)= \supess_{(x,y)\in\Omega\times \Omega} \widehat{W}^{\rm slc}(u(x), u(y)),
\qquad u\in L^{\infty}(\Omega).
\end{align*}
\end{Proposition}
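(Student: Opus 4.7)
The plan is to prove the two inequalities that make up the equality, namely $\tilde J \leq J^{\rm rlx}$ and $J^{\rm rlx} \leq \tilde J$, where $\tilde J(u) := \supess_{(x,y)\in\Omega\times\Omega} \widehat W^{\rm slc}(u(x),u(y))$ for $u\in L^\infty(\Omega)$.

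For the lower bound $\tilde J \leq J^{\rm rlx}$, I would first check that $\widehat W^{\rm slc}$ has the right structural properties to apply the sufficiency result for lower semicontinuity. By~\eqref{slcenvelope=} together with~\eqref{levelset_W}, one has $L_c(\widehat W^{\rm slc}) = L_c(\widehat W)^{\rm sc} = \widehat{L_c(W)}^{\rm sc}$ for every $c\in\mathbb R$; since $L_c(W)$ is compact by coercivity of $W$, so is $\widehat{L_c(W)}$, and Remark~\ref{rem:Esc_compact} guarantees in the scalar case $m=1$ that separate convexification preserves compactness. Hence each $L_c(\widehat W^{\rm slc})$ is closed and bounded, so $\widehat W^{\rm slc}$ is lower semicontinuous and coercive, and it is separately level convex by its very definition. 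Proposition~\ref{suffseplevconv} then yields that $\tilde J$ is $L^\infty$-weakly$^\ast$ lower semicontinuous. Since $\widehat W^{\rm slc}\leq \widehat W$ implies $\tilde J \leq \hat J = J$ by~\eqref{J=tildeJ}, and $J^{\rm rlx}$ is by definition the largest $L^\infty$-weakly$^\ast$ lower semicontinuous functional below $J$, the desired inequality follows.

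For the upper bound $J^{\rm rlx}\leq \tilde J$, fix $u\in L^\infty(\Omega)$; the case $\tilde J(u)=\infty$ is trivial, so let $c := \tilde J(u)< \infty$. By the definition of the essential supremum, $(u(x),u(y)) \in L_c(\widehat W^{\rm slc})$ for a.e.~$(x,y)\in\Omega\times\Omega$; invoking once more $L_c(\widehat W^{\rm slc}) = \widehat{L_c(W)}^{\rm sc}$, this says exactly $u \in \mathcal A_{\widehat{L_c(W)}^{\rm sc}}$. The key step is now to apply Theorem~\ref{theo:weakclosure_intro} to the compact set $K = L_c(W)$; since $m=1$, no further hypotheses are required, and the theorem yields $\mathcal A_{\widehat{L_c(W)}^{\rm sc}} = \mathcal A_{L_c(W)}^\infty$. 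Thus there exists a sequence $(u_j)_j \subset \mathcal A_{L_c(W)}$ with $u_j \weaklystar u$ in $L^\infty(\Omega)$. For each $j$, $W(u_j(x),u_j(y))\leq c$ for a.e.~$(x,y)$, so $J(u_j)\leq c$, and therefore $J^{\rm rlx}(u)\leq \liminf_{j\to\infty} J(u_j) \leq c = \tilde J(u)$.

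The main conceptual obstacle has in fact been overcome already in the paper: it is the identification in Theorem~\ref{theo:weakclosure_intro} of $\mathcal A_K^\infty$ with $\mathcal A_{\widehat K^{\rm sc}}$, together with the sublevel-set identity~\eqref{slcenvelope=} that links separate level convexification of functions with separate convexification of their sublevel sets. Given these two ingredients, the proof becomes a routine translation between the supremal and indicator-functional viewpoints encoded by~\eqref{representation_supremal}.
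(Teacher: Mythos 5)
Your proof is correct and follows essentially the same route as the paper: the lower bound rests on the $L^\infty$-weak$^\ast$ lower semicontinuity of the candidate supremal functional with supremand $\widehat W^{\rm slc}$ (you invoke Proposition~\ref{suffseplevconv} where the paper cites Corollary~\ref{cor}, but Remark~\ref{rem:firstparttheomain} confirms this is an equivalent path), and the upper bound combines~\eqref{slcenvelope=} and~\eqref{levelset_W} with Theorem~\ref{theo:weakclosure_intro} applied to the sublevel sets of $W$. Your upper bound is in fact marginally cleaner: since $\widehat W^{\rm slc}(u(x),u(y))\leq c:=\supess_{(x,y)\in \Omega\times\Omega}\widehat W^{\rm slc}(u(x),u(y))$ almost everywhere, you may work directly at the level $c$, whereas the paper approximates with levels $c_k\searrow c$ and then extracts a diagonal sequence; both arguments are valid.
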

\begin{proof}
The argument for the lower bound on $J^{\rm rlx}$ relies on Corollary~\ref{cor} and  \eqref{J=tildeJ}, together with the simple observation that $\widehat {W}\geq \widehat W^{\rm slc}$. 

For the upper bound on $J^{\rm rlx}$, take any $u\in L^\infty(\Omega)$ such that 
\begin{align*}
c:=\supess_{(x,y)\in \Omega\times \Omega} \widehat{W}^{\rm slc}(u(x), u(y))<\infty.
\end{align*} 
Then there exists a sequence of real numbers $(c_k)_k$ with $c_k\searrow c$ as $k\to \infty$ such that owing to~\eqref{representation_supremal} and~\eqref{levelset_W},
\begin{align*}
u \in \mathcal A_{L_{c_k}(\widehat{W}^{\rm slc})}= \mathcal A_{L_{c_k}(\widehat{W})^{\rm sc}} = \mathcal A_{\widehat{L_{c_k}(W)}^{\rm sc}} 
\quad \text{ for all $k\in \mathbb{N}$.}
\end{align*}

Now, Theorem \ref{theo:weakclosure_intro} applied to $\mathcal A_{\widehat{L_{c_k}(W)}^{\rm sc}}$  for every $k\in \mathbb N$ guarantees the existence of a sequences $(u_{k, j})_j\subset \mathcal A_{L_{c_k}(W)}$
 with $u_{k,j}\weaklystar u$ in $L^\infty(\Omega)$ as $j\to \infty$. 
Via diagonalization (see~\cite[Lemma~1.15, Corollary~1.16]{Att84}), one can select a diverging subsequence $k(j)\to \infty$ as $j\to \infty$ such that the sequence $(u_j)_j$ with $u_j:=u_{k(j), j} \in \mathcal A_{L_{c_{k(j)}}(W)}$ for $j\in \mathbb N$ satisfies 
$u_j \weaklystar u$ in $L^\infty(\Omega)$.

Then, 
\begin{align*}
J^{\rm rlx}(u)\leq \limsup_{j\to \infty} J(u_j)
\leq \limsup_{j\to \infty} c_{k(j)} = c = \supess_{(x,y)\in \Omega\times\Omega} \widehat{W}^{\rm slc}(u(x), u(y)).\end{align*}
\end{proof}

Under additional assumptions, we can generalize  
Proposition~\ref{prop:relaxation2} to the vectorial case.
\begin{Remark}\label{rem:Jwlsc_m}
Let $W:\R^m\times \R^m\to \R$ with $m>1$ such that for any $c\in \R$, the sublevel set $\widehat{L_c(W)}$ is compact and satisfies both~\eqref{ass76} and \eqref{slcenvelope=}. 
Then, the $L^\infty$-weak$^\ast$ lower semicontinuous envelope of $J$ is then given by the nonlocal supremal functional with density 
 $\widehat{\widehat{W}^{\rm slc}}$, which may in general be different from $\widehat{W}^{\rm slc}$, as~Remark~\ref{rem:hat1}\,b) indicates. 
\end{Remark}

\subsection{Explicit examples of lower semicontinuous functionals and relaxations}\label{subsec:examples}
To illustrate the general results of Section~\ref{subsec:lsc_relaxation}, we present a few examples of nonlocal $L^\infty$-functionals whose supremands have multiwell structure. 

In the scalar setting, we determine explicit relaxation formulas for two nonlocal four-well supremands. Even though the sets of wells can be transformed into each other via rotation and scaling, their relaxations feature qualitative differences. 

\begin{Example}\label{ex} 
Throughout this example, $|\cdot |_{\square}$ stands for the maximum norm on $\mathbb{R}\times \mathbb{R}\cong \R^2$, i.e.~$|(\xi,\zeta)|_\square=\max\{|\xi|, |\zeta|\}$ for $\xi, \zeta\in \mathbb{R}$, and we write $B_r^\square(\xi, \zeta)$ to denote the corresponding closed balls of radius $r>0$ with center in $(\xi, \zeta)\in \R\times \R$. Moreover, ${\rm dist}_\square(\cdot, E)$ indicates the maximum distance from a set $E\subset \R\times \R$, cf.~Section~\ref{not} for the corresponding notations with respect to the Euclidean norm. 

a) Let $J$ as in~\eqref{ourfunct} with $W(\xi, \zeta) = {\rm dist}((\xi, \zeta), K_6)$ for $(\xi, \zeta)\in \R\times \R$, where $K_6=\{-1,1\}\times \{-1,1\}$ is the compact, diagonal and symmetric set from~\eqref{K4K5}. Then, for $c\geq 0$, the level sets of $W$ are unions of balls, precisely, $L_c(W) = \bigcup_{(\xi, \zeta)\in K_6} B_c(\xi, \zeta)$, while $L_c(W)=\emptyset$ for $c<0$. It follows along with~\eqref{levelset_W} that for $c\geq 0$, 
\begin{align*}
L_c(\widehat{W})=\widehat{L_c(W)}= \bigcup_{(\xi, \zeta)\in K_6}B_{\frac{c}{\sqrt{2}}}^\square(\xi, \zeta),
\end{align*}
which is the union of the maximal squares contained in the balls whose union gives $L_c(W)$, and hence,
$\widehat{W}(\xi, \zeta) = \sqrt{2} \,{\rm dist}_{\square}((\xi,\zeta), K_6)$ for $(\xi, \zeta)\in \R\times \R$.

Due to~\eqref{slcenvelope}, $ L_c(\widehat{W}^{\rm slc})=L_c(\widehat{W})^{\rm sc}= B^\square_{1+\frac{c}{\sqrt{2}}}(0,0)$ for $c\geq 0$, 
and we infer that
\begin{align*}
 \widehat{W}^{\rm slc}(\xi, \zeta) =\sqrt{2}\,\max\big\{|(\xi, \zeta)|_\square - 1, 0\big\} 
\end{align*}
for $(\xi, \zeta)\in \R\times\R$. 
By Proposition~\ref{prop:relaxation2}, this gives rise to an explicit expression for $J^{\rm rlx}$.

A curiosity related to the nonlocal behavior of $W$ and the associated necessary diagonalization is that, unlike for local supremal functionals, $\widehat W^{\rm slc}$ is not everywhere smaller than $W$; 
for instance, $\widehat W^{\rm slc}(1, 1+r) = \sqrt{2}r > r = W(1, 1+r)$ for any $r>0$.

b) Consider $J$ from~\eqref{ourfunct} with $W(\xi, \zeta) = {\rm dist}((\xi, \zeta), K_5)$ for $(\xi, \zeta)\in \R\times \R$ and the compact set $K_5=\{(0,1), (1,0), (0, -1), (-1, 0)\}$ from~\eqref{K4K5}. Similarly to a), the sublevel sets $L_c(W)$ are non-empty for $c\geq 0$, with $L_c(W)=\bigcup_{(\xi, \zeta)\in K_5} B_c(\xi, \zeta)$. We observe that  $L_c(\widehat{W}) = \widehat{L_c(W)}=\emptyset$ for $c<\frac{1}{\sqrt{2}}$, 
while for $c\geq\frac{1}{\sqrt{2}}$, a simple geometric argument shows that
\begin{align*}
L_c(\widehat{W})= \bigcup_{r\in [r_-(c), r_+(c)]} \partial B_{r}^\square(0,0)
\end{align*}
with $r_{\pm}(c)=\frac{1}{2}\max\{1\pm\sqrt{2c^2-1}, 0\}$, and consequently, $L_c(\widehat{W})^{\rm sc}= B_{r_+(c)}^\square(0,0)$. In view of~\eqref{slcenvelope}, we finally obtain
\begin{align*}
\widehat{W}^{\rm slc}(\xi, \zeta) =\begin{cases} \sqrt{\frac{1}{2}(2 |(\xi, \zeta)|_\square-1)^2 +\frac{1}{2}} & \text{for $|(\xi, \zeta)|_\square \geq \frac{1}{2}$,}\\ \frac{1}{\sqrt{2}} & \text{otherwise,}\end{cases}
\end{align*}
for $(\xi, \zeta)\in \R\times\R$, which yields an explicit formula for the relaxation $J^{\rm rlx}$, see Proposition~\ref{prop:relaxation2}.

 We point out that in this example, even the minimum of $W$ is smaller than that of $\widehat W^{\rm slc}$, precisely, 
$\min W = 0 < \tfrac{1}{\sqrt{2}}= \min \widehat W= \min \widehat W^{\rm slc}$. 
\end{Example}

The next examples show the $L^\infty$-weak$^\ast$ lower semicontinuity of two types of supremal functionals with symmetric two-well supremands in the vectorial setting. 

\begin{Example} \color{olive} Let $m>1$. \color{black}

a) For $K=\{(-\alpha, -\alpha), (\alpha, \alpha)\}\subset \R^m\times \R^m$ with $\alpha\in \R^m\setminus\{0\}$, let
$W(\xi, \zeta) = {\rm dist}_\square((\xi, \zeta), K):= \min_{\beta\in\{-\alpha, \alpha\}} \max\{|\xi-\beta|, |\zeta-\beta|\}$  for $(\xi, \zeta) \in \R^m\times \R^m$.  
Then the level sets for any $c\in \R$ are given by 
\begin{align*}
L_c(W) = \bigl(B_c(\alpha)\times B_c(\alpha)\bigr)\cup \bigl(B_c(-\alpha)\times B_c(-\alpha)\bigr),
\end{align*}
recalling that $B_{r}(\xi)=\{\zeta\in \R^m: |\zeta-\xi|\leq r\}$ for $r>0$ and $\xi\in \R^m$, cf.~Section~\ref{not}.  Note that $W$ is not separately level convex, since $L_c(W)$ fails to be separately convex for $c\geq |\alpha|$; in particular, Proposition~\ref{suffseplevconv} is not applicable here.  
However, as the union of 
Cartesian products of convex sets, all level sets of $W$ are clearly symmetric and diagonal, meaning $W=\widehat{W}$, and we can infer in light of Remark~\ref{rem:hat1}\,b) and~\eqref{levelset_W} that 
\begin{align*}
\widehat{\widehat{L_c(W)}^{\rm sc}} = \widehat{L_c(W)^{\rm sc}}= L_c(W) = \widehat{L_c(W)}. 
\end{align*}
By 
\color{olive} Corollary~\ref{cor}, \color{black} this condition is sufficient for  $L^\infty$-weakly$^\ast$ lower semicontinuity for $J$ as in~\eqref{ourfunct}. 

b) The same statement as in a) holds for $J$, if we use $K=\{(\alpha, -\alpha), (-\alpha, \alpha)\}$ with $\alpha\in \R^m\setminus\{0\}$ and set $W(\xi, \zeta) ={\rm dist}_\square((\xi, \zeta),K):= \min\{\max\{|\xi-\alpha|, |\zeta+\alpha|\}, \max\{|\xi+\alpha|, |\zeta-\alpha|\}\}$ for $(\xi, \zeta)\in \R^m\times \R^m$. Then, 
\begin{align*}
L_c(W) = \bigl(B_c(\alpha)\times B_c(-\alpha)\bigr) \cup \bigl(B_c(-\alpha)\times B_c(\alpha)\bigr)
\end{align*}
for $c\in \R$, and 
\begin{align*}
\widehat{L_c(W)} = \begin{cases} \bigl(B_{c}(\alpha)\cap B_c(-\alpha)\bigr) \times \bigl(B_c(\alpha) \cap B_c(-\alpha) \bigr) & \text{for $c\geq|\alpha|,$}\\  \emptyset & \text{otherwise.} \end{cases}
\end{align*}
Considering that these sets are already separately convex, we conclude again with \color{olive} Corollary~\ref{cor}. 
\end{Example}

\section*{Acknowledgements}
The authors would like to thank Giuliano Gargiulo and Martin Kru\v{z}\'ik for interesting discussions. CK was partially supported by a Westerdijk Fellowship from Utrecht University and by the NWO grant TOP2.17.012. EZ is a member of the Gruppo Nazionale per l'Analisi Matematica, la Probabilit\'a e le loro Applicazioni (GNAMPA) of the Istituto Nazionale di Alta Matematica (INdAM). 
This paper was written during visits of the authors at Mathematical Department of Utrecht University and at Dipartimento di Ingegneria Industriale dell' Universit\'a di Salerno, whose kind
hospitality and support are gratefully acknowledged. 
In particular, the support of GNAMPA through the program 'Professori Visitatori 2018' is gratefully acknowledged.


\bibliographystyle{abbrv}
\bibliography{NonlocalSup}

	\end{document}